\begin{document}

\title{Compact Clifford--Klein Forms -- Geometry, Topology and Dynamics}
\author{David Constantine
\footnote{\textsc{Department of Mathematics and Computer Science, Wesleyan University, Middletown, CT 06459 U.S.A.}                  
\textit{email:} \texttt{dconstantine@wesleyan.edu}}}
\date{\today}
\maketitle

%%%%%%%define theorems
%\theoremstyle{plain} \newtheorem*{thmA}{Theorem A}
%\theoremstyle{plain} \newtheorem*{thmB}{Theorem B}
\theoremstyle{plain} \newtheorem{thm}{Theorem}[section]
\theoremstyle{plain} \newtheorem{lemma}[thm]{Lemma}
\theoremstyle{plain} \newtheorem{prop}[thm]{Proposition}
\theoremstyle{plain} \newtheorem{cor}[thm]{Corollary}	
\theoremstyle{definition} \newtheorem{defn}[thm]{Definition}	
\theoremstyle{remark} \newtheorem*{note}{Notation}	
\theoremstyle{remark} \newtheorem{rmk}[thm]{Remark}
\theoremstyle{remark} \newtheorem{obs}[thm]{Observation}
\theoremstyle{remark} \newtheorem{ques}[thm]{Question}
\theoremstyle{plain} \newtheorem{conj}[thm]{Conjecture}

\newcommand{\dq}{H\backslash G/\Gamma}
\newcommand{\SL}[1]{\textrm{SL}_{#1}(\mathbb{R})}
\newcommand{\transversal}{\ensuremath{\bar\pitchfork}}

\maketitle
%ABSTRACT%%%%%%%%%
\begin{abstract}
We survey results on compact Clifford--Klein forms of homogeneous spaces, with a focus on recent contributions and organized around approaches via topology, geometry and dynamics.  In addition, we survey results on moduli spaces of compact forms.
\end{abstract}

\tableofcontents

%
%%
%%%
%%%%
%%%%%
%%%%%%
%%%%%%%
%%%%%%%%%%%%%%%%%%%%%%%%%%%%%%%%%%
\section{Introduction}

\subsection{The basic questions, and a conjecture}

This survey will be devoted to two questions.  The first, the existence question, has been studied fairly extensively.  The second, the deformation question, has so far received less attention, though there have been several recent developments.

\begin{ques}[The existence question for compact Clifford--Klein forms]\label{exist_quest}
Let $H\backslash G$ be a homogeneous space of a noncompact Lie group $G$.  Does there exist a discrete subgroup $\Gamma$ in $G$ such that $\dq$ is a compact manifold locally modeled on $H\backslash G$?
\end{ques}

\noindent  A compact manifold $\dq$ is generally called a compact Clifford--Klein form.  The alternate terminology `$H\backslash G$ has a tessellation' has been suggested by the series of papers \cite{Oh_Witte_new, Oh_Witte_compact, Witte_Iozzi}.  This terminology may be preferable in that it avoids the confusion with compact forms of Lie groups, but it is not standard.

The ultimate goal is to understand all pairs $(G,H)$ which have compact Clifford--Klein forms.  There is a Goldilocks tension at play in looking for $\Gamma$: it must not be too big, so that it can act freely and properly discontinuously on $H\backslash G$; it must not be too small or else $\dq$ will not be compact.  In addition, $\Gamma$ must be `well-positioned' in $G$ relative to $H$, as we will see most clearly in the approaches to the problem in Section \ref{sec_cartan_proj} below.

For all of the results we will outline, some sort of restriction must be placed on $G$ and $H$.  It is very common to assume $G$ semisimple, or even simple, and that $H$ is reductive in $G$, i.e. its adjoint action on the Lie algebra of $G$ is reducible.  These are the main cases of geometric interest and will be the focus of this survey.  As exceptions to this rule, however, I would note: the flat Minkowski spacetime $\mathbb{R}^{n,1}=\textrm{O}(n,1)\backslash(\textrm{O}(n,1)\ltimes \mathbb{R}^{n+1})$ and the affine space $\mathbb{R}^n=\textrm{GL}_n(\mathbb{R}) \backslash (\textrm{GL}_n(\mathbb{R}) \ltimes \mathbb{R}^n)$, especially as studied in relation to the Auslander Conjecture (see \cite{gold_et_al} and \cite{aus_survey} for surveys); and the `tangential symmetric spaces,' studied in \cite{ksurvey} by Kobayashi and Yoshino.

The study of such forms of Riemannian homogeneous spaces goes back to the work of Borel on discrete subgroups of Lie groups (see \cite{borel-forms}).  As we will soon see, however, the main focus of Question \ref{exist_quest} is now pseudo-Riemannian homogeneous spaces, i.e. those for which $H$ is noncompact.  The first work on such spaces was that of Calabi and Markus \cite{calabi-markus}.  Since the late '80's, the driving force behind considering the problem for reductive, non-Riemannian homogeneous spaces -- our specific concern here --  has been Toshiyuki Kobayashi (see the many references below), and in subsequent years a wide variety of mathematicians have contributed to the field.

Let us note two related problems. First, one can ask which homogeneous spaces have finite-volume Clifford--Klein forms.  A few of the results below are general enough to deal with this situation, but to the author's knowledge nothing specific to the finite-volume case has been done.  It would be quite surprising to find a homogeneous space possessing a finite-volume form but no compact one.

Second, one can also ask whether a compact $(G,X)$-manifold where $X=H\backslash G$ exists.  Such a structure consists of the following: a \emph{developing map} $dev:\tilde M \to X$ and a \emph{holonomy homomorphism} $hol:\pi_1(M)\to G$ such that the developing map is a local diffeomorphism and equivariant with respect to the holonomy map.  $(G,X)$-structures will be used in Section \ref{sec_goldman} below, and are treated in more detail there.  A compact $(G,X)$-manifold need not be of the form $\dq$.  The author knows of two results flexible enough to treat this more general existence question (\cite{bl} and \cite{lmz}, Sections \ref{ben_lab} and \ref{sec_lmz} below), but again, most cases are open.

\vspace{.5cm}

Naturally, the second question we survey is the uniqueness counterpart of the first:

\begin{ques}[The deformation question for compact forms]\label{def_quest}
For homogeneous spaces possessing a compact form, can one describe the space of all compact forms?  Specifically, can compact forms be deformed locally?
\end{ques}

Work on this question is, generally speaking, more recent, and includes some very recent results which I will survey in Section \ref{sec_deformations}.

\vspace{.5cm}

There is one central conjecture in this field, having to do with the following algebraic construction of compact forms.  Let us suppose that $L$ and $H$ are Lie subgroups of $G$ such that:

\begin{itemize}
	\item $HL=G$
	\item $H\cap L$ is compact
	\item $\Gamma$ is a uniform lattice in $L$
\end{itemize}

\noindent Then a compact form of $H\backslash G$ may be formed by taking 

\[H\backslash G/\Gamma \cong (H\cap L) \backslash L/\Gamma.\] 

\noindent The requirement above that $HL=G$ can be loosened -- we need only that $HL$ is cocompact in $G$.  These algebraically-constructed compact forms will be called \emph{standard forms}.  Kobayashi conjectures the following:

\begin{conj}[Kobayashi,  \cite{ksurvey} Conj. 3.3.10]\label{kob_conj}
If $H\backslash G$ possesses a compact form, for $G$ and $H$ reductive, it possesses a standard form.
\end{conj}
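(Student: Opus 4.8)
Because this is the central open problem of the subject, I can only lay out the most natural line of attack and indicate where it stalls; the plan is to convert the conjecture into a purely group-theoretic existence statement and then attack that statement with the Cartan-projection criterion for proper actions. \textbf{Reduction.} A connected reductive subgroup $L\subseteq G$ admits a uniform lattice $\Gamma_0$ by Borel's theorem, and --- after passing to a torsion-free finite-index subgroup via Selberg's lemma --- this $\Gamma_0$ yields a standard form as soon as $L$ acts properly and cocompactly on $H\backslash G$ (properness forces $H\cap L$ compact, cocompactness is exactly $HL$ cocompact, and then $(H\cap L)\backslash L/\Gamma_0\cong H\backslash G/\Gamma_0$ is a compact form). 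Conversely a standard form supplies such an $L$. The conjecture is therefore equivalent to the statement: whenever $H\backslash G$ carries a compact form via some discrete $\Gamma$, there exists a connected reductive subgroup $L$ acting properly and cocompactly. The whole game is to manufacture such an $L$ out of an arbitrary $\Gamma$.

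\textbf{Building the subgroup.} Starting from $\Gamma$, the natural candidate is a connected subgroup capturing its large-scale geometry --- a reductive part of the Zariski closure $\overline{\Gamma}$, or a syndetic hull $L\supseteq\Gamma$. Cocompactness comes essentially for free: since $\Gamma\subseteq L$, the $L$-orbits contain the $\Gamma$-orbits, so any compact set meeting every $\Gamma$-orbit meets every $L$-orbit. Properness is the real content, and crucially it runs the opposite way, since enlarging $\Gamma$ to $L$ can only make a proper action harder to sustain. In the language of the properness criterion (Section \ref{sec_cartan_proj}) one has $\mu(\Gamma)\subseteq\mu(L)$, and while $\Gamma$ acts properly precisely because $\mu(\Gamma)\transversal\mu(H)$, the enlarged projection $\mu(L)$ acquires new directions that must \emph{also} be shown transversal to $\mu(H)$. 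The strategy thus hinges on choosing $L$ small enough that $\mu(L)$ does not stray toward $\mu(H)$, yet reductive and large enough to stay cocompact --- exactly the Goldilocks tension highlighted in the introduction.

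\textbf{The main obstacle.} The crux, and the reason the conjecture remains open in general, is that there is no systematic way to produce a hull $L$ that is simultaneously reductive and proper. The Zariski closure of $\Gamma$ need be neither connected nor reductive: it may carry a nontrivial unipotent radical, and replacing it by a Levi subgroup alters the Cartan projection in ways not controlled by $\mu(\Gamma)$, so transversality to $\mu(H)$ can be destroyed (killing properness) or the Levi can fail to act cocompactly. Controlling this interaction between the reductive structure of a candidate $L$ and the asymptotic position of $\mu(H)$ --- a refined form of the Calabi--Markus phenomenon --- is precisely what no existing technique handles uniformly. A more defensive route would abandon the direct construction of $L$ and instead try to show, by topological rigidity, that any compact form is forced to share the invariants (Euler characteristic, Pontryagin numbers, $L^2$-Betti numbers) of a standard model; but deducing the existence of an algebraic $L$ from such numerical coincidences is itself unresolved, so this yields supporting evidence rather than a proof.
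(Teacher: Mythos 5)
This statement is a conjecture, not a theorem: the paper offers no proof of it and explicitly describes it as open, the main support being empirical (every known compact form of a reductive homogeneous space admits a standard model) together with partial results such as Theorem~\ref{class_thm}, where a higher-rank factor in $Z_G(H)$ forces \emph{every} compact form to be standard, and the Oh--Witte-Morris--Iozzi classification for homogeneous spaces of $\textrm{SO}(2,n)$. You correctly recognize this and do not claim a proof, so there is no argument in the paper to compare yours against. Your reduction is sound and is indeed the standard reformulation: producing a standard form amounts to producing a connected reductive $L\leq G$ with $H\cap L$ compact and $HL$ cocompact, since Borel's theorem and Selberg's lemma then supply a torsion-free uniform lattice acting freely.

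One concrete correction to your proposed line of attack: building $L$ as a hull of the given $\Gamma$ (Zariski closure, Levi factor thereof, or syndetic hull) cannot work even in principle, because the conjecture must accommodate compact forms whose $\Gamma$ is Zariski-dense in $G$. The paper itself exhibits these --- Kassel's Corollary~\ref{kas_cor_1} gives Zariski-dense $\Gamma$ in $\textrm{SO}(2,2n)$ yielding compact forms of $\textrm{U}(1,n)\backslash\textrm{SO}(2,2n)$, and Salein's examples lie in components of $\textrm{Hom}(\Gamma,\textrm{PSL}_2(\mathbb{R}))$ far from any standard form. For such $\Gamma$ the only algebraic hull is $G$ itself, which acts neither properly nor with the required compact intersection with $H$. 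The conjecture only asserts that \emph{some} standard form exists, with an $L$ that may bear no relation to the given $\Gamma$; any proof must therefore extract the existence of $L$ from coarser data (e.g.\ the Cartan projection $\mathfrak{a}(\Gamma)$ up to similarity, or cohomological invariants of $H\backslash G/\Gamma$) rather than from the group $\Gamma$ directly. This is precisely why the problem resists the otherwise natural strategy you outline.
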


\noindent The reader should note that Kobayashi's conjecture is not simply `all compact forms are standard.'  We will see in Section \ref{sec_deformations} that the answer to the deformation question is sometimes yes, and that not all the forms are standard.  The first such results were due to Goldman, Ghys and Kobayashi (\cite{goldman_nonstd}, \cite{ghys2}, \cite{kobayashi_def}).  In addition, there are nonstandard forms which are not deformations of standard ones, first obtained by Salein (\cite{salein_exotic}, see Section \ref{sec_salein_kassel}).

\begin{rmk}\label{Oniscik_rmk}
I would like to note here a result that bears on the construction mentioned above.  In \cite{compact_forms} Lemma 9.2.2, we prove that when $G$ is simple and $H$ is contained in a maximal proper parabolic subgroup of $G$, then the condition $HL=G$ forces $L$ to be semisimple.  Assuming $H$ reductive, one can then apply the work of Oni\v{s}\v{c}ik in \cite{oniscik}, where all decompositions of simple $G$ as products of reductive subgroups are classified.\footnote{Oni\v{s}\v{c}ik's work assumes that $H$ is reductive but not that $H$ is contained in a proper parabolic subgroup.}  Most examples in this classification do not satisfy $H\cap L$ compact; the remainder of Oni\v{s}\v{c}ik's list provides a useful catalogue of spaces potentially having compact forms.
\end{rmk}

We will survey below some of the results that give evidence for Conjecture \ref{kob_conj}.  However, as the reader will gather from the wide variety of approaches to the problem (each of which has its own particular realm of usefulness), there is no unified approach at the moment.

%
%%
%%%
%%%%
%%%%%
%%%%%%
%%%%%%%
%%%%%%%%%%%%%%%%%%%%

\subsection{Motivation and special examples}

We motivate the existence question with a special case.  Let $H$ be a compact subgroup of $G$.  Then it is easy to see that finding a suitable $\Gamma$ reduces to finding a uniform lattice in $G$.  For $G$ semisimple, Borel found such lattices (see \cite{borel-forms}).  These examples include the Riemannian symmetric spaces, spaces of great geometric interest.

As we allow $H$ noncompact, we can still form symmetric spaces by taking $H$ to be an open subgroup of the points of $G$ fixed by some involution.  In general, however, these will be only \emph{pseudo-Riemannian} symmetric spaces (see Prop \ref{signature_prop}).  The main motivation for the existence question is to understand these symmetric spaces as we do the Riemannian ones, although they are, generally speaking, much harder to study.  The main difficulty is that $\dq$, with both $H$ and $\Gamma$ noncompact, usually does not inherit the main structures present on $G$; a good example is a right-invariant Riemannian metric.  Hence the need for the many new techniques developed below.

To motivate the deformation question, consider the special case $G=\SL 2$, $H=\textrm{SO(2)}$.  Here the deformation spaces of compact forms (up to conjugation of $\Gamma$ in $G$) are Teichm\"uller spaces of marked hyperbolic structures on closed, genus $g\geq 2$ surfaces.  Some of the deformation results we will examine below are actually on a similar problem, examining again constant curvature -1 structures, but this time on three-dimensional Lorentzian manifolds.

\vspace{.5cm}

A few special examples are worth mentioning.  The first example one might consider -- simply because of the straight-forward structure of the groups involved -- is $\SL k \backslash \SL n$, where $\SL k$ is embedded in the upper left-hand corner of $\SL n$.  This can be generalized by embedding $\SL k$ via other representations.  Amazingly, the existence question for these spaces is not entirely solved, although all work so far gives a negative answer to that question.  A variety of results deal with small-dimensional versions of this problem; the main results for general dimension follow from the topological approaches discussed in Section \ref{sec_topology} and the dynamical approaches in Section \ref{sec_dynamics}.

The second example is geometrically motivated, and predates the example above.  Call a pseudo-Riemannian manifold $M$ with signature $(p,q)$ a $(p,q)$-\emph{space form} if it has constant sectional curvature $\kappa$.  We are familiar, of course, with the Riemannian $(p,0)$-space forms; $(p,1)$-space forms are the Lorentzian case (see, e.g., \cite{wolf_spaces}).  The first motivation for the existence problem was Calabi and Markus's study of $(3,1)$-space forms as possible models for space-time (\cite{calabi-markus}, see Sec. \ref{sec_calabi_markus}).  If we assume as well that $M$ is geodesically complete, then $M$ must be a quotient of one of the following spaces (see \cite{wolf_spaces}):

\begin{itemize}
	\item $\kappa < 0$: $\mathbb{H}^{p,q}:=\textrm{O}(p,q)\backslash \textrm{O}(p, q+1)$ for $q\geq 2$, or $\widetilde{\textrm{O}(p,1)}\backslash \widetilde{\textrm{O}(p,2)}$
	\item $\kappa = 0$: $\mathbb{R}^{p,q}:=\textrm{O}(p,q)\backslash \textrm{O}(p,q) \ltimes \mathbb{R}^{p+q}$
	\item $\kappa >0$: $\mathbb{S}^{p,q}:=\textrm{O}(p,q) \backslash \textrm{O}(p+1, q)$ for $p\geq 2$, or $\widetilde{\textrm{O}(1, q)}\backslash \widetilde{\textrm{O}(2, q)}$.
\end{itemize}

Kobayashi has the following conjecture about pseudo-Riemannian space forms:

\begin{conj}[Kobayashi, \cite{ko_01}, Conj 2.6]
There exists a compact space form with signature $(p,q)$ ($p, q \neq 0$) and sectional curvature $\kappa$ if and only if:
\begin{itemize}
	\item $\kappa< 0$ and $(p,q)$ belongs to 
		\begin{tabular}{|c|c|c|c|c|c|}
			\hline
			$p$ & 1 & 3 & 7 \\ 
			\hline
			$q$ & $2\mathbb{N}$ & $4\mathbb{N}$ & 8 \\
			\hline
		\end{tabular}
	\item $\kappa=0$ and $(p,q)$ is arbitrary
	\item $\kappa> 0$ and $(p,q)$ belongs to 
		\begin{tabular}{|c|c|c|c|c|c|}
			\hline
			$p$ & $2\mathbb{N}$ & $4\mathbb{N}$ & 8 \\ 
			\hline
			$q$ & 1 & 3 & 7 \\
			\hline
		\end{tabular}
\end{itemize}
\end{conj}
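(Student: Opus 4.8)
The plan is to separate the three curvature regimes and, within each, to prove the existence (``if'') and nonexistence (``only if'') directions by entirely different machinery. First I would dispose of the flat case: when $\kappa=0$ the quotient $\mathbb{R}^{p,q}/\mathbb{Z}^{p+q}$ is a compact $(p,q)$-space form for \emph{every} signature, so that row needs nothing beyond exhibiting the torus. For $\kappa\neq 0$ I would exploit the anti-isometry $g\mapsto -g$, which sends a $(p,q)$-form of curvature $\kappa$ to a $(q,p)$-form of curvature $-\kappa$ while preserving compactness; this interchanges the $\kappa<0$ and $\kappa>0$ tables under $p\leftrightarrow q$, so it suffices to settle the negatively curved model $\mathbb{H}^{p,q}=\textrm{O}(p,q)\backslash\textrm{O}(p,q+1)$ and transport the conclusion by duality.

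For the existence direction I would build \emph{standard forms} exactly as in the construction preceding Conjecture \ref{kob_conj}. The three admissible families correspond to the normed division algebras $\mathbb{C},\mathbb{H},\mathbb{O}$: inside the relevant $\textrm{O}(p,q+1)$ one finds $\textrm{U}(n,1)$, $\textrm{Sp}(n,1)$, and a spinorial octonionic subgroup $\textrm{Spin}(8,1)\hookrightarrow\textrm{SO}(8,8)$, each acting transitively on the defining quadric with compact isotropy ($\textrm{U}(n)$, $\textrm{Sp}(n)$, $\textrm{Spin}(7)$, respectively); a dimension count matches $p+q$ in each case. Taking $L$ to be such a subgroup, one checks the three bullet hypotheses ($HL$ cocompact, $H\cap L$ compact) and then invokes Borel's theorem to produce a uniform lattice $\Gamma\le L$, giving a compact $\dq\cong(H\cap L)\backslash L/\Gamma$. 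The progressions $2\mathbb{N},4\mathbb{N}$ arise because the integer $n$ in $\textrm{U}(n,1)$ and $\textrm{Sp}(n,1)$ is free, while the isolated value in the octonionic row reflects the non-associativity of $\mathbb{O}$, which blocks the iteration; the Lorentzian slices are handled identically after passing to the universal cover $\widetilde{\textrm{O}(p,1)}\backslash\widetilde{\textrm{O}(p,2)}$ (odd-dimensional anti-de Sitter space).

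The nonexistence direction is where essentially all the difficulty lies, and I expect it to be the main obstacle. I would attack it in three layers. First, the Calabi--Markus phenomenon (Section \ref{sec_calabi_markus}): whenever $\textrm{rank}_{\mathbb{R}}\textrm{O}(p,q+1)=\textrm{rank}_{\mathbb{R}}\textrm{O}(p,q)$, no infinite subgroup acts properly discontinuously, so the noncompact quadric admits only finite, hence non-cocompact, quotients; this already forces one index to strictly dominate the other and eliminates a large swath of signatures. Second, for the survivors I would invoke the characteristic-number obstructions of the topological approach (Section \ref{sec_topology}): a compact form carries a flat $\textrm{O}(p,q+1)$-structure, so its Chern--Weil numbers vanish, while Hirzebruch proportionality ties its Pontryagin and Euler numbers to those of the compact dual; the resulting identities fail outside the listed $(p,q)$. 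Third, the residual cases require the dynamical Cartan-projection criteria (Sections \ref{sec_cartan_proj} and \ref{sec_dynamics}) detecting that a properly discontinuous subgroup cannot simultaneously be cocompact.

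The honest obstruction is that none of these three layers is individually decisive, and closing the gap between ``admits no standard form'' and ``admits no form at all'' is precisely Kobayashi's Conjecture \ref{kob_conj} specialized to space forms. I would therefore expect to complete the existence direction in full, to complete the Calabi--Markus portion of nonexistence, and to push the proportionality obstructions through for large sub-families (e.g.\ whenever the dimension or parity of $p+q$ is incompatible with a flat-bundle Euler class), while leaving a genuine open residue. This is exactly why the biconditional is stated as a conjecture rather than a theorem.
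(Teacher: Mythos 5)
This statement is a conjecture, not a theorem: the paper offers no proof of it, and explicitly records that the ``if'' direction is known (via the standard forms of Table \ref{kob_table_1} and \cite{ksurvey}), that the $\kappa=0$ case is trivial, and that the $\kappa<0$ and $\kappa>0$ cases are exchanged by $\mathbb{S}^{p,q}\simeq \mathbb{H}^{q,p}$, while the ``only if'' direction remains open. Your plan reproduces exactly this framing --- the torus for $\kappa=0$, the curvature-reversing duality, the division-algebra subgroups $\textrm{U}(n,1)$, $\textrm{Sp}(n,1)$, $\textrm{Spin}(1,8)$ with Borel lattices for existence, and an honestly acknowledged open residue for nonexistence --- so it is consistent with the paper and correctly identifies where the genuine difficulty lies. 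One technical correction to your second nonexistence layer: a compact Clifford--Klein form does not carry a flat $\textrm{O}(p,q+1)$-structure on its tangent bundle (only the principal bundle defined by the holonomy is flat; the tangent bundle is the associated bundle for the isotropy representation), so ``Chern--Weil numbers vanish'' is not the operative mechanism --- Kulkarni's obstruction \cite{kulkarni} instead combines the pseudo-Riemannian Gauss--Bonnet formula, which makes $\chi(M)$ a nonzero multiple of the volume, with a cohomological-dimension argument forcing $\chi(M)=0$ when $p$ and $q$ are both odd.
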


\noindent  The `if' part of this conjecture is proven (see \cite{ksurvey} and references therein); the `only if' part is open, though we will see several results below that address portions of it.  The $\kappa=0$ portion is trivial; the $\kappa<0$ and $\kappa>0$ portions are equivalent as $\mathbb{S}^{p,q} \simeq \mathbb{H}^{q,p}$.

As preparation for a few of the techniques below, let me record a definition of an important class of homogeneous spaces and a fact about their pseudo-Riemannian structure.  

\begin{defn}
The homogeneous space $H\backslash G$ is of reductive type if $G$ is reductive, $H$ is a closed subgroup with finitely many connected components and $H$ is stable under a Cartan involution\footnote{Recall that a Cartan involution of the Lie algebra $\mathfrak{g}$ is an involution $\theta$ such that $-\kappa(X, \theta Y)$ is positive definite as bilinear form in $X$ and $Y$, where $\kappa$ is the Killing form. It plays a central role in the theory of semisimple and reductive Lie groups.  $H$ is stable under this involution if its Lie algebra is stable.} of $G$.
\end{defn}

\noindent Note that $H\backslash G$ of reductive type implies that $H$ is reductive; the converse holds if $G$ is reductive and $H$ is semisimple or algebraic.

\begin{prop}[See, e.g., \cite{ksurvey} Prop 3.2.7]\label{signature_prop} Let $H\backslash G$ be a homogeneous space of reductive type.  Then $H\backslash G$ has a right-$G$-invariant pseudo-Riemannian metric of signature 
\[(d(G)-d(H), dim(G)-dim(H)-d(G)+d(H))\]
induced by the Killing form on $\mathfrak{g}$. Here $d(-)$ is the dimension of the Riemannian symmetric space associated to the Lie group.
\end{prop}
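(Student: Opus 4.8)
The plan is to reduce the construction of the metric to ordinary invariant theory for the isotropy representation, and then read off the signature from the Cartan decomposition. Write $\mathfrak{g}$ and $\mathfrak{h}$ for the Lie algebras of $G$ and $H$, let $\theta$ be the Cartan involution stabilizing $\mathfrak{h}$, and let $\mathfrak{g}=\mathfrak{k}\oplus\mathfrak{p}$ be the associated $(+1,-1)$-eigenspace decomposition, so that $\kappa$ is negative definite on $\mathfrak{k}$ and positive definite on $\mathfrak{p}$. The first step is the standard identification for the right action of $G$ on $H\backslash G$: the isotropy group of the base coset $o=He$ is $H$ itself, the tangent space $T_o(H\backslash G)$ is canonically $\mathfrak{g}/\mathfrak{h}$, and the isotropy representation is the adjoint action of $H$ on $\mathfrak{g}/\mathfrak{h}$. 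Consequently, right-$G$-invariant pseudo-Riemannian metrics on $H\backslash G$ are in bijection with $\mathrm{Ad}(H)$-invariant nondegenerate symmetric bilinear forms on $\mathfrak{g}/\mathfrak{h}$, so it suffices to produce one such form from $\kappa$.

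Second, I would use $\theta$-stability to split off $\mathfrak{h}$ cleanly. Since $\theta$ is an automorphism it preserves $\kappa$ and interchanges the eigenspaces appropriately, forcing $\kappa(\mathfrak{k},\mathfrak{p})=0$. As $\mathfrak{h}$ is $\theta$-stable it decomposes as $\mathfrak{h}=(\mathfrak{h}\cap\mathfrak{k})\oplus(\mathfrak{h}\cap\mathfrak{p})$, on which $\kappa$ is respectively negative and positive definite and mutually orthogonal; hence $\kappa|_{\mathfrak{h}}$ is nondegenerate. This is the crux of the argument: it lets me set $\mathfrak{q}=\mathfrak{h}^{\perp}$ (the orthogonal complement with respect to $\kappa$) and conclude $\mathfrak{g}=\mathfrak{h}\oplus\mathfrak{q}$. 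Because $\kappa$ is $\mathrm{Ad}(G)$-invariant and $\mathfrak{h}$ is $\mathrm{Ad}(H)$-invariant, the complement $\mathfrak{q}$ is $\mathrm{Ad}(H)$-invariant; and since $\theta$ preserves both $\kappa$ and $\mathfrak{h}$, it also stabilizes $\mathfrak{q}$. Identifying $\mathfrak{g}/\mathfrak{h}\cong\mathfrak{q}$, the restriction $\kappa|_{\mathfrak{q}}$ is the desired $\mathrm{Ad}(H)$-invariant nondegenerate form, and right-translating it over $H\backslash G$ yields the metric; invariance under the isotropy action is exactly what guarantees that the translate is independent of the choice of coset representative.

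Third, I would compute the signature using the $\theta$-stable splitting $\mathfrak{q}=(\mathfrak{q}\cap\mathfrak{k})\oplus(\mathfrak{q}\cap\mathfrak{p})$. On $\mathfrak{q}\cap\mathfrak{p}$ the form $\kappa$ is positive definite and on $\mathfrak{q}\cap\mathfrak{k}$ it is negative definite, so the signature is $(\dim(\mathfrak{q}\cap\mathfrak{p}),\dim(\mathfrak{q}\cap\mathfrak{k}))$. Using $d(G)=\dim\mathfrak{p}$ and $d(H)=\dim(\mathfrak{h}\cap\mathfrak{p})$ together with the orthogonal splittings above, one gets $\dim(\mathfrak{q}\cap\mathfrak{p})=\dim\mathfrak{p}-\dim(\mathfrak{h}\cap\mathfrak{p})=d(G)-d(H)$ and $\dim(\mathfrak{q}\cap\mathfrak{k})=(\dim G-d(G))-(\dim H-d(H))$, which is precisely the claimed signature $(d(G)-d(H),\,\dim G-\dim H-d(G)+d(H))$.

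The principal point requiring care is the nondegeneracy of $\kappa|_{\mathfrak{h}}$, which is exactly what $\theta$-stability buys and without which the complement $\mathfrak{q}$ and the induced form would not be well defined. A secondary technical issue arises when $G$ is merely reductive rather than semisimple, for then the Killing form degenerates on the center of $\mathfrak{g}$; in that case I would replace $\kappa$ throughout by a fixed nondegenerate $\mathrm{Ad}(G)$-invariant symmetric form that agrees with the Killing form on the semisimple part, is definite on the center, and is $\theta$-compatible, after which every step above goes through verbatim.
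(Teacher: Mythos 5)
Your proof is correct and is the standard argument for this statement; the paper itself gives no proof, deferring to \cite{ksurvey} Prop.\ 3.2.7, and your route (nondegeneracy of $\kappa|_{\mathfrak{h}}$ via $\theta$-stability, the $\mathrm{Ad}(H)$-invariant complement $\mathfrak{q}=\mathfrak{h}^{\perp}$, and the signature count from $\mathfrak{q}=(\mathfrak{q}\cap\mathfrak{k})\oplus(\mathfrak{q}\cap\mathfrak{p})$ with $d(G)=\dim\mathfrak{p}$) is exactly the one in that reference. Your closing remark about replacing the Killing form by a nondegenerate $\mathrm{Ad}(G)$-invariant, $\theta$-compatible form when $G$ is reductive but not semisimple is the right technical caveat.
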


%
%%
%%%
%%%%
%%%%%%%%%%%%%%%%%%%%%%
\subsection{Organization and aims of the survey}

I would be very remiss not to mention that there are already several excellent surveys of the existence question for compact forms.  In particular, let me note the survey ``Quelques r\'esultats r\'ecents sur les espaces localement homog\`enes compacts'' \cite{labourie-survey} by Labourie, the survey ``Compact Clifford--Klein forms of symmetric spaces -- revisited'' \cite{ksurvey} by Kobayashi and Yoshino, and the introduction to Kassel's thesis \cite{fanny_thesis}.  Kobayashi and Yoshino's survey in particular is very extensive and for the results related to Kobayashi's large amount of work on the problem, no better survey could be desired.  

It is not my intention to replace any of these excellent papers, but rather to supplement them.  In particular I would like to point out the following features that I hope will make this survey a serviceable supplement:

\begin{itemize}
	\item We survey a few results (eg. \cite{compact_forms}, \cite{salein_exotic}, \cite{kassel_deformation}, \cite{g-w_anosov}) which have been published since the most recent survey \cite{ksurvey}, or were not included there.
	\item This survey gives the first complete presentation of the representation-theoretic approaches to the problem (see Section \ref{sec_rep_thry}). 
	\item We will treat the extensive work on the problem by Kobayashi and his collaborators lightly; \cite{ksurvey} is still the best resource on this work.
	\item We provide the first survey of material on the deformation question, much of which has been developed recently (Section \ref{sec_deformations}).
\end{itemize}

As this survey is being written as part of the proceedings for the conference Geometry, Topology and Dynamics in Negative Curvature (Bangalore, 2010), I have organized the presentation around the themes of the conference.  In Section \ref{sec_topology} we survey approaches to the existence question that can be broadly classed as topological.  In Section \ref{sec_geom} we survey geometric approaches and in Section \ref{sec_dynamics} we survey approaches primarily using dynamics.  The reader will find, of course, that many of our results straddle these categories.  Finally, in Section \ref{sec_deformations} we survey results on the deformation question.

%
%%
%%%
%%%%
%%%%%
%%%%%%%%%%%%%%%%%%%
\subsection{Thanks}

I would like to thank the organizers of the conference Geometry, Topology and Dynamics in Negative Curvature (Bangalore, 2010) for inviting me to speak at that excellent event, and for inviting me to contribute this survey to the proceedings.  I would especially like to thank Fanny Kassel for reading an earlier draft of this survey and providing numerous and very helpful comments and suggestions, and an unnamed referee for a very careful reading of an earlier draft and helpful suggestions too numerous to detail.  Remaining mistakes are my own.

%NOTATION%%%%%%%%%%
\section{Notation}
The following notation will be fixed throughout the survey for consistency.  Notation which only makes an appearance relating to one result will conform, as much as possible, to the authors' original notation, for ease of reference.

\begin{itemize}
	\item $G$ and $H$ will be (real) Lie groups; $\Gamma$ will be a discrete subgroup of $G$
	\item $\dq$ will be a compact Clifford--Klein form of the homogeneous space $H\backslash G$.
	\item $Z_G(H)$ will be the centralizer of $H$ in $G$; often $J<Z_G(H)$ will be a particular subgroup of this centralizer, usually a semisimple group.
	\item German letters ($\mathfrak{g}$, $\mathfrak{j}$, etc.) denote the Lie algebras associated to the corresponding Lie groups.
	\item For a Lie group $G$, let $d(G)$ denote the dimension of the (Riemannian) symmetric space associated to $G$. 
\end{itemize}

%
%%
%%%
%%%%
%%%%%
%%%%%%
%%TOPOLOGY%%%%%%%%%%
\section{Topology}\label{sec_topology}

In basic formulation, the compact forms question is a topological one, and the first approach to it is via the algebra and topology of Lie groups.  Some of what is surveyed in this section is not strictly topological, but there is a significant contrast with the approaches found in Section \ref{sec_geom}, where stronger geometric structures are used.  

The topological approach has a long history, with the most extensive contribution being provided by Toshiyuki Kobayashi.  In fact, no better survey of the topological approaches to this problem can be found than his survey \cite{ksurvey} with Yoshino.  Below I will present an abbreviated look at the basic features of the topological approaches; greater detail can be found in Kobayashi and Yoshino's survey.

%RANK CONSIDERATIONS%%%%%%
\subsection{Rank restrictions -- the Calabi--Markus phenomenon}\label{sec_calabi_markus}

Several of the main nonexistence results for compact forms utilize restrictions on the real ranks of the Lie groups $G$ and $H$.  In fact, the first nonexistence result is in this vein -- the so-called Calabi--Markus phenomenon.  In \cite{calabi-markus}, Calabi and Markus investigate what they call relativistic space forms, that is, complete Lorentz manifolds with constant curvature.  Of these there are three models, the Minkowski plane $\mathbb{R}^{n,1}$ with curvature zero, the de Sitter space $\textrm{dS}^n = \mathbb{S}^{n-1,1}$ with curvature $+1$, and the anti-de Sitter space $\textrm{AdS}^n = \mathbb{H}^{n-1,1}$ with curvature $-1$ (take $n\geq 2$ here).  The de Sitter space is formed by taking the standard Minkowski form on $\mathbb{R}^{n,1}$, namely  $\textrm{ds}^2 = -x_1^2+x_2^2+\cdots +x_{n+1}^2$ and restricting this form to the quadric hypersurface $-x_1^2+x_2^2+\cdots +x_{n+1}^2=1$.  The group $G=\textrm{SO}(1,n)$ preserves this hypersurface and the stabilizer of the point $(0,1,0, \ldots, 0)$ is $\textrm{SO}(1, n-1)$ so $\textrm{dS}^n = \textrm{SO}(1, n-1)\backslash \textrm{SO}(1,n)$.  Topologically, $\textrm{dS}^n$ is the product of the real line with a sphere, hence noncompact.

Calabi and Markus prove the following, of which the nonexistence of a compact (or finite-volume) form is a clear corollary.

\begin{thm}[Calabi--Markus, \cite{calabi-markus} Thm 1]  Any $\Gamma$ which acts properly discontinuously by isometries on $\emph{dS}^n = \emph{SO}(1, n-1)\backslash \emph{SO}(1,n)$ is finite.
\end{thm}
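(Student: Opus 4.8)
The plan is to translate proper discontinuity into an algebraic condition inside $G=\mathrm{SO}(1,n)$ and then exploit the fact that $H=\mathrm{SO}(1,n-1)$ has the same real rank as $G$. Writing $p\colon G\to H\backslash G$ for the projection and letting $G$ act on the right, the first step is the elementary reformulation: a discrete $\Gamma$ acts properly discontinuously on $H\backslash G$ if and only if $\Gamma\cap C^{-1}HC$ is finite for every compact $C\subset G$. I would prove the direction I need, that properness implies finiteness, directly from the definition: given compact $C$, set $\Omega=p(C)$, a compact subset of $H\backslash G$; a short chase shows that $\Omega\gamma\cap\Omega\neq\emptyset$ holds exactly when $\gamma=g^{-1}hg'$ for some $g,g'\in C$ and $h\in H$, i.e. when $\gamma\in C^{-1}HC$, so that $\{\gamma\in\Gamma:\Omega\gamma\cap\Omega\neq\emptyset\}=\Gamma\cap C^{-1}HC$; proper discontinuity forces this set to be finite.

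The heart of the argument is then to produce a single compact $C$ for which $C^{-1}HC$ is all of $G$. Here I would use the Cartan decomposition $G=KA^{+}K$, with $K=\mathrm{S}(\mathrm{O}(1)\times\mathrm{O}(n))$ and $A=\{a_t\}$ the one-parameter group of boosts in the $(e_1,e_2)$-plane, the real rank of $G$ being $1$. The key observation is that $A$ is conjugate, by an element $w\in K$ interchanging the spacelike vectors $e_2$ and $e_3$, into $H=\mathrm{Stab}(e_2)$: the boosts $wa_tw^{-1}$ act in the $(e_1,e_3)$-plane and fix $e_2$, so $wAw^{-1}\subset H$. Consequently
\[ G=KA^{+}K\subseteq K\,(w^{-1}Hw)\,K=(wK)^{-1}\,H\,(wK), \]
so that with the compact set $C=wK$ we obtain $C^{-1}HC=G$. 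Combining this with the first step and the trivial inclusion $\Gamma\subseteq G$ yields $\Gamma=\Gamma\cap C^{-1}HC$, which must be finite; hence $\Gamma$ is finite.

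I expect the main obstacle to be the verification that $A$ conjugates into $H$ cleanly enough to give $C^{-1}HC=G$ for a genuinely compact (rather than merely closed) $C$; this is precisely where the equality of real ranks $\mathrm{rank}_{\mathbb{R}}\,\mathrm{SO}(1,n-1)=\mathrm{rank}_{\mathbb{R}}\,\mathrm{SO}(1,n)=1$ does all the work, and it is the feature that fails for, say, $\mathrm{SO}(n)\backslash\mathrm{SO}(1,n)=\mathbb{H}^n$, which does admit cocompact $\Gamma$. A minor point to dispatch at the outset is that the full isometry group is $\mathrm{O}(1,n)$; since passing to $\Gamma\cap\mathrm{SO}(1,n)$ alters $\Gamma$ only by finite index, it suffices to treat $\Gamma\subset\mathrm{SO}(1,n)$, and in any case the same Cartan decomposition argument applies verbatim to $\mathrm{O}(1,n)$.
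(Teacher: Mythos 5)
Your proof is correct, but it takes a genuinely different (Lie-theoretic) route from the one the paper sketches. The paper's argument is the original geometric one of Calabi and Markus: consider the equatorial sphere $S=\{x_1=0\}$ inside the quadric model of $\mathrm{dS}^n$, show by an elementary symmetry argument about the origin that $\gamma S\cap S\neq\emptyset$ for \emph{every} isometry $\gamma$, and then invoke compactness of $S$ together with proper discontinuity to force $\Gamma$ to be finite. You instead reformulate properness algebraically and use the Cartan decomposition $G=KA^{+}K$, conjugating the rank-one split torus $A$ into $H$ to obtain $KHK=G$. These are two languages for the same key fact: the equatorial sphere is exactly the $K$-orbit $p(K)$ of the base point $e_2$, so ``every isometric image of $S$ meets $S$'' is literally the identity $G=KHK$. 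What your version buys is generality and transparency about where the hypothesis enters --- it is precisely the argument behind the general Calabi--Markus phenomenon (Theorem \ref{cm}), where equality of real ranks lets one conjugate a full maximal split torus of $G$ into $H$; your closing remark correctly identifies this as the step that fails for $\mathbb{H}^n$. What the paper's version buys is elementarity: it uses no structure theory, only the quadric model and a compactness argument. Your handling of the full isometry group $\mathrm{O}(1,n)$ by passing to a finite-index subgroup is a legitimate point that the paper's sketch glosses over.
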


\begin{proof}[Idea of proof]
Consider the equatorial sphere in $\textrm{dS}^n$ described by setting $x_1=0$.  It is an easy exercise using the symmetry about the origin to show that the image of this sphere under any isometry intersects the original sphere.  As this sphere is compact, these intersections cannot occur for infinitely many elements of $\Gamma$ without violating proper discontinuity of the action.
\end{proof}

The general idea here -- that $\textrm{SO}(1,n-1)$ is `large enough' with respect to $\textrm{SO}(1,n)$ to leave no room for an infinite $\Gamma$ -- is codified more generally in the following criterion.

\begin{thm}[Calabi--Markus phenomenon, see \cite{calabi-markus}, \cite{wolf}, \cite{kobayashi-proper}]\label{cm}
If the $\mathbb{R}$-rank of $H$ is the same as the $\mathbb{R}$-rank of $G$, only a finite $\Gamma$ can act properly discontinuously on $H\backslash G$.  Thus $H\backslash G$ admits a compact form only if it is already compact.
\end{thm}

%
%%
%%%%%%%%%%%%%
\subsection{Generalizations of Calabi--Markus}\label{sec_cartan_proj}

The most comprehensive approach to this situation has been provided by Kobayashi and Benoist (see \cite{kobayashi-proper}, \cite{ko_necessary}, \cite{kob_criterion}, \cite{kobayashi_lecture}  and \cite{benoist_actions}, \cite{benoist}).  Benoist and Ko\-bay\-ashi formulate a very general criterion for proper action which suppresses even the group structures of $H$ and $\Gamma$ and treats them symmetrically.

\begin{defn}[\cite{kobayashi_lecture}, Defn 1.11.1; \cite{benoist}, \S 3.1]\label{kob_defn}
Let $A$ and $B$ be subsets of $G$.
\begin{itemize}
	\item The pair $(A, B)$ is \emph{transversal}, denoted $A \transversal B$, if $A\cap SBS$ is relatively compact for any compact $S\subset G$.\footnote{Benoist calls the pair $(A,B)$ `$G$-proper.'}
	\item The pair $(A,B)$ is \emph{similar}, denoted $A\sim B$, if there exists a compact subset $S$ in $G$ such that $A\subset SBS$ and $B\subset SAS$.
\end{itemize}
\end{defn}

\noindent Note that $\Gamma$ acts properly on $H\backslash G$ if and only if $\Gamma \transversal H$, that transversality is preserved by similarity, and that if $H\sim G$, then $H\transversal \Gamma$ only if $\Gamma$ is compact.  For $G\cong \mathbb{R}^n$, and $A$ and $B$ closed cones in $G$, the definitions are particularly simple: similarity corresponds to equality and transversality corresponds to $A\cap B =\{0\}$.  To exploit these facts, recall the concept of Cartan decomposition for a reductive Lie group: $G=KAK$, where $K$ is a maximal compact subgroup and $A$ is a Cartan subgroup.  Let $\pi:G\to \mathfrak{a}/W$ be the associated Cartan projection ($W$ is the Weyl group); it is continuous, surjective and proper.  Write $\mathfrak{a}(S)=\pi(S)W \subset \mathfrak{a}$ for any subset $S\subset G$.  The considerations above indicate that properness can be checked by examining $\mathfrak{a}(H)$ and $\mathfrak{a}(\Gamma)$.  Specifically, $H \transversal \Gamma$ (resp. $H \sim \Gamma$) in $G$ if and only if $\mathfrak{a}(H) \transversal \mathfrak{a}(\Gamma)$ (resp. $\mathfrak{a}(H) \sim \mathfrak{a}(\Gamma)$).  Another way to state the properness condition is that $\Gamma$ acts properly on $H\backslash G$ if and only if $\mathfrak{a}(\Gamma)$ goes away from $\mathfrak{a}(H)$ at infinity, i.e. for any compact subset $C$ of $\mathfrak{a}$, one has that $\mathfrak{a}(\Gamma) \cap (\mathfrak{a}(H)+C)$ is compact. The result is the following theorem, which strengthens Theorem \ref{cm} by providing a converse:

\begin{thm}[General Calabi--Markus phenomenon]
Let $H\backslash G$ be of reductive type.  Then the following are equivalent:
\begin{itemize}
	\item $\mathbb{R}$-rank$(H)$ = $\mathbb{R}$-rank$(G)$
	\item Only a finite $\Gamma$ acts properly discontinuously on $H \backslash G$.
\end{itemize}
\end{thm}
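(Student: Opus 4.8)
The plan is to route everything through the Cartan-projection criterion recalled just above the statement: a discrete $\Gamma$ acts properly discontinuously on $H\backslash G$ if and only if $\mathfrak{a}(\Gamma)\transversal\mathfrak{a}(H)$, equivalently if for every compact $C\subset\mathfrak{a}$ the set $\mathfrak{a}(\Gamma)\cap(\mathfrak{a}(H)+C)$ is compact. The first task is to pin down the cone $\mathfrak{a}(H)$. Using that $H\backslash G$ is of reductive type, I would fix a Cartan involution $\theta$ stabilizing $\mathfrak{h}$, write $\mathfrak{h}=(\mathfrak{h}\cap\mathfrak{k})\oplus(\mathfrak{h}\cap\mathfrak{p})$, choose a maximal abelian $\mathfrak{a}_H\subseteq\mathfrak{h}\cap\mathfrak{p}$, and extend it to a maximal abelian $\mathfrak{a}\subseteq\mathfrak{p}$; then $\dim\mathfrak{a}_H=\mathbb{R}\text{-rank}(H)$ and $\dim\mathfrak{a}=\mathbb{R}\text{-rank}(G)$. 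The $H$-Cartan decomposition gives $H=(H\cap K)\exp(\mathfrak{a}_H)(H\cap K)$ with $H\cap K\subseteq K$, so every $h\in H$ lies in $K\exp(\mathfrak{a}_H)K$ and its $G$-Cartan projection is the Weyl-orbit of an element of $\mathfrak{a}_H$. I would record this as the structural lemma: $\mathfrak{a}(H)$ is a closed, $W$-invariant cone contained in $W\cdot\mathfrak{a}_H$ (closedness since the proper map $\pi$ is closed and $H$ is closed in $G$).

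For the implication that equal ranks force $\Gamma$ finite, equality $\dim\mathfrak{a}_H=\dim\mathfrak{a}$ with $\mathfrak{a}_H\subseteq\mathfrak{a}$ forces $\mathfrak{a}_H=\mathfrak{a}$, hence $A=\exp\mathfrak{a}\subseteq H$ and $\mathfrak{a}(H)\supseteq\mathfrak{a}(A)=\mathfrak{a}$, so $\mathfrak{a}(H)=\mathfrak{a}$. Then $\mathfrak{a}(H)+C=\mathfrak{a}$ for every compact $C$, and the properness criterion demands that $\mathfrak{a}(\Gamma)=\mathfrak{a}(\Gamma)\cap\mathfrak{a}$ be compact. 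Since $\pi$ is proper, the preimage under $\pi$ of a compact set is compact, so $\Gamma$ sits inside a compact subset of $G$; being discrete, $\Gamma$ is finite. This recovers Theorem \ref{cm}.

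The converse is the substantive direction. Assuming $\mathbb{R}\text{-rank}(H)<\mathbb{R}\text{-rank}(G)$, we have $\mathfrak{a}_H\subsetneq\mathfrak{a}$, so $W\cdot\mathfrak{a}_H$ is a finite union of proper linear subspaces and cannot exhaust the real vector space $\mathfrak{a}$; choose $v\in\mathfrak{a}\setminus(W\cdot\mathfrak{a}_H)$. I would then set $\Gamma=\langle\exp v\rangle$, an infinite discrete subgroup (infinite since $v\neq 0$, discrete since $\|\pi(\exp nv)\|=|n|\,\|v\|\to\infty$ and $\pi$ is proper), with $\mathfrak{a}(\Gamma)=\{\,n\,wv : n\in\mathbb{Z},\,w\in W\,\}$ supported on the finitely many rays $Wv$. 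Because $v\notin W\cdot\mathfrak{a}_H\supseteq\mathfrak{a}(H)$ and $\mathfrak{a}(H)$ is a closed cone, the distance $d=\mathrm{dist}(v,\mathfrak{a}(H))$ is strictly positive, and homogeneity of the cone together with $W$-invariance of $\mathfrak{a}(H)$ gives $\mathrm{dist}(n\,wv,\mathfrak{a}(H))=|n|\,d$. Hence for compact $C$ with $R=\sup_{c\in C}\|c\|$, a point of $\mathfrak{a}(\Gamma)$ lies in $\mathfrak{a}(H)+C$ only if $|n|\,d\le R$, which leaves finitely many points; thus $\mathfrak{a}(\Gamma)\cap(\mathfrak{a}(H)+C)$ is finite and $\mathfrak{a}(\Gamma)\transversal\mathfrak{a}(H)$. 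Therefore this infinite $\Gamma$ acts properly discontinuously, completing the equivalence.

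The main obstacle is not the cone-distance estimate, which is elementary, but the structural lemma that $\mathfrak{a}(H)$ is a closed cone contained in $W\cdot\mathfrak{a}_H$, and hence a \emph{proper} subset of $\mathfrak{a}$ precisely when $\mathbb{R}\text{-rank}(H)<\mathbb{R}\text{-rank}(G)$. This is exactly where reductive type is indispensable: $\theta$-stability of $H$ is what aligns the $H$-Cartan decomposition with the $G$-one and keeps $\mathfrak{a}(H)$ confined to the Weyl translates of $\mathfrak{a}_H$. Without it, $\mathfrak{a}(H)$ could in principle occupy a full-dimensional region of $\mathfrak{a}$, and then neither the selection of $v$ nor the verification that the cyclic $\Gamma$ acts properly would be available.
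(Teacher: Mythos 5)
Your proof is correct, and it follows exactly the route the paper sets up: the Benoist--Kobayashi criterion that properness of the $\Gamma$-action is detected by $\mathfrak{a}(\Gamma)\transversal\mathfrak{a}(H)$, together with the identification $\mathfrak{a}(H)=W\cdot\mathfrak{a}_H$ coming from the $\theta$-stability of $H$ (the survey states the theorem without proof, but this Cartan-projection argument is precisely the one it is alluding to, and your cyclic $\Gamma=\langle\exp v\rangle$ with $v\notin W\cdot\mathfrak{a}_H$ is the standard witness for the converse). The only cosmetic quibble is that discreteness of $\langle\exp v\rangle$ follows from continuity of $\pi$ (or simply from $\Gamma$ being a lattice in the closed subgroup $A$), not from its properness.
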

% OTHER PAPERS: wolf, kulkarni, kob89
 
In \cite{benoist}, Benoist develops a further use of the Cartan projection to put a restriction on any properly acting $\Gamma$.  Assume $H$ is stable under the Cartan involution defining $K$.  Define $B_W = \{aW \in \mathfrak{a}/W: aW = -aW\}$.  

%Let $\mathfrak{a}^+$ be the positive Weyl chamber in $\mathfrak{a}$ with respect to some choice of a positive system of (restricted) roots, and let $\mathfrak{a}^+(H)$ be the image of the Cartan projection of $H$ in this chamber.  Denote the \emph{opposition involution} on $\mathfrak{a}^+$ by sending $a$ to $\iota(a):=(\mbox{the unique point in }\mathfrak{a}^+\mbox{ conjugate under }W \mbox{ to } -a$).  Let $B^+:=\{a\in \mathfrak{a}^+ : \iota(a)=a\}.$  Benoist obtains the following theorem:

\begin{thm}[Benoist, \cite{benoist} \S7.5]\label{ben_thm}
Let $H\backslash G$ be of reductive type and let $H$ be stable under the Cartan involution defining $K$ as above.  Only a virtually abelian $\Gamma$ may act properly discontinuously on $H\backslash G$ if and only if $B_W \subset \mathfrak{a}(H)$.  In particular, if $H\backslash G$ is noncompact and $B_W \subset \mathfrak{a}(H)$, then $H\backslash G$ has no compact form.
\end{thm}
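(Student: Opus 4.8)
The plan is to translate the whole statement into the language of the Cartan projection, where both the properness condition and the meaning of $B_W$ become transparent, and then to feed in Benoist's structure theory of limit cones. Two elementary observations set the stage. Writing $\mu\colon G\to\mathfrak a^+$ for the Cartan projection and $\iota=-w_0$ for the opposition involution, one has $\mu(g^{-1})=\iota(\mu(g))$; since $H$ and $\Gamma$ are closed under inversion, the sets $\mathfrak a(H)$ and $\mathfrak a(\Gamma)$, and hence their asymptotic (limit) cones $\ell_H=\mathfrak a(H)$ and $\ell_\Gamma$, are $\iota$-invariant. Second, unwinding the definition shows that $B_W$ is precisely the fixed set $\mathrm{Fix}(\iota)$ of $\iota$ inside $\mathfrak a^+$. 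I would then record the asymptotic form of the properness criterion already assembled in the excerpt: $\Gamma\transversal H$ is equivalent to $\ell_\Gamma\cap\mathfrak a(H)=\{0\}$.

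For the implication ``$B_W\subset\mathfrak a(H)\Rightarrow$ only virtually abelian $\Gamma$ act properly,'' suppose $\Gamma$ acts properly. Then $\ell_\Gamma\cap\mathfrak a(H)=\{0\}$, and since $B_W\subset\mathfrak a(H)$ this forces $\ell_\Gamma\cap\mathrm{Fix}(\iota)=\{0\}$. I would then run a short convexity argument: passing to the Zariski closure (where Benoist's convexity theorem applies), for any $X\in\ell_\Gamma$ the $\iota$-invariance gives $\iota X\in\ell_\Gamma$, and convexity gives $\tfrac12(X+\iota X)\in\ell_\Gamma\cap\mathrm{Fix}(\iota)=\{0\}$, so $X$ lies in the $(-1)$-eigenspace of $\iota$. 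Hence $\ell_\Gamma$ lies in a proper subspace of $\mathfrak a$ and has empty interior. By Benoist's theorem on the asymptotic properties of linear groups, a limit cone with empty interior forces the semisimple part of the Zariski closure of $\Gamma$ to be trivial, so $\Gamma$ is virtually abelian. I expect this passage --- from ``limit cone avoids $B_W$'' to ``virtually abelian'' via Benoist's convexity/nonempty-interior theorem --- to be the main obstacle, since this is exactly where the deep structure theory does the real work.

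For the converse, suppose $B_W\not\subset\mathfrak a(H)$ and choose a nonzero self-opposite $X\in B_W\setminus\mathfrak a(H)$. Since $\mathfrak a(H)$ is closed, there is a narrow closed $\iota$-invariant cone $C$ about the ray $\mathbb R_{\ge 0}X$ with $C\cap\mathfrak a(H)=\{0\}$. I would then invoke Benoist's Schottky/ping-pong construction to build a nonabelian free subgroup $\Gamma<G$ whose limit cone is contained in $C$; the self-oppositeness of $X$ (that is, $X\in\mathrm{Fix}(\iota)$) is precisely what makes an $\iota$-invariant limit cone clustered near $X$ possible. By construction $\ell_\Gamma\cap\mathfrak a(H)=\{0\}$, so this $\Gamma$ acts properly while being non-virtually-abelian, completing the biconditional.

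Finally, for the ``in particular'' statement I would argue by contradiction. A compact form supplies a $\Gamma$ acting properly, freely and cocompactly on $H\backslash G$, which by the biconditional just established must be virtually abelian. But a compact Clifford--Klein form of reductive type satisfies $\mathrm{cd}(\Gamma)=d(G)-d(H)$ (Kobayashi--Ono), while a virtually abelian group acting properly has $\mathrm{cd}(\Gamma)$ equal to its rank, which the Cartan-projection picture bounds by $\mathbb R\text{-rank}(G)-\mathbb R\text{-rank}(H)$. Counting restricted roots with multiplicity gives $d(G)-d(H)\ge \mathbb R\text{-rank}(G)-\mathbb R\text{-rank}(H)$, with strict inequality exactly when $H\backslash G$ is noncompact; this yields $\mathrm{cd}(\Gamma)=d(G)-d(H)>\mathbb R\text{-rank}(G)-\mathbb R\text{-rank}(H)\ge\mathrm{cd}(\Gamma)$, the contradiction that rules out a compact form. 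I would flag that this last paragraph is where the hypothesis ``noncompact'' genuinely enters, and that it leans on the independent cohomological-dimension input rather than on Benoist's criterion itself.
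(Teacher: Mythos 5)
The survey states this result as a citation of Benoist's paper and does not reproduce a proof, so I am judging your argument against what the argument must actually be. The decisive gap is in your forward direction, at the very first step: properness of the $\Gamma$-action is \emph{not} equivalent to $\ell_\Gamma\cap\mathfrak{a}(H)=\{0\}$. The criterion recorded in Section 3.2 of the survey is that $\mathfrak{a}(\Gamma)\cap(\mathfrak{a}(H)+C)$ be compact for every compact $C\subset\mathfrak{a}$, i.e.\ the distance from $\mu(\gamma)$ to $\mathfrak{a}(H)$ tends to infinity; if that distance grows sublinearly in $\|\mu(\gamma)\|$ the action is still proper while the directions $\mu(\gamma)/\|\mu(\gamma)\|$ converge into $\mathfrak{a}(H)$, so the limit cone can meet $\mathfrak{a}(H)$ nontrivially. (This is precisely the gap between properness and the ``sharpness'' condition of Kassel--Kobayashi mentioned at the end of the survey.) There is also an internal warning sign in your convexity step: since $\overline{\mathfrak{a}^+}$ contains no line, $\iota X=-X$ with $X,\iota X\in\overline{\mathfrak{a}^+}$ forces $X=0$, so your argument, if valid, would give $\ell_\Gamma=\{0\}$ and hence $\Gamma$ finite --- absurd, since infinite virtually abelian groups do act properly on many such spaces. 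The reason it is not valid is that Benoist's convexity and nonempty-interior theorems apply to subgroups that are Zariski dense in a connected semisimple group, not to an arbitrary $\Gamma$ ``after passing to the Zariski closure'' (and the nonempty interior is taken in the Cartan subspace of that semisimple group, not of $G$). The actual mechanism is different: one shows that any non-virtually-abelian discrete subgroup contains an unbounded family of elements whose Cartan projections stay at \emph{bounded} distance from $B_W$ --- for instance $\mu(\gamma^n\delta\gamma^{-n})=\mu(\gamma^n)+\iota\mu(\gamma^n)+\mu(\delta)+O(1)$ for suitably generic $\gamma,\delta$, using approximate additivity of the Cartan projection \`a la Abels--Margulis--Soifer/Benoist, the point being that $\mu(\gamma^n)+\iota\mu(\gamma^n)$ lies in $\mathrm{Fix}(\iota)$ --- which violates the drift-to-infinity criterion outright when $B_W\subset\mathfrak{a}(H)$, with a separate argument for virtually solvable but non-virtually-abelian $\Gamma$.

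Your converse direction (Schottky groups with limit cone in a narrow $\iota$-invariant cone around a self-opposite $X\notin\mathfrak{a}(H)$) is the right idea and is essentially Benoist's construction, though you defer all the content to it; note that one needs the ping-pong to control $\mu(\gamma)$ for \emph{all} $\gamma$ up to a bounded error, not just asymptotically. The ``in particular'' step also has a real gap: the inequality $\mathrm{cd}(\Gamma)\le\mathbb{R}\text{-rank}(G)-\mathbb{R}\text{-rank}(H)$ for a virtually abelian properly acting $\Gamma$ is asserted, not proved, and as a statement about virtually abelian discrete subgroups of $G$ it is false (lattices in abelian unipotent subgroups have cohomological dimension far exceeding the real rank); whatever bound holds here must use properness relative to $H$ in an essential way, and that argument --- together with the claimed strict inequality $d(G)-d(H)>\mathbb{R}\text{-rank}(G)-\mathbb{R}\text{-rank}(H)$ for noncompact $H\backslash G$ --- is exactly what is missing.
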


Table \ref{benoist_table} lists a few spaces to which this theorem applies.

\begin{table}[h]
\begin{center}
\begin{tabular}{|c|c|c|}
\hline
	$G$ & $H$ & conditions \\
	\hline
	$\SL{n}$ & $ (\SL{k}\times \SL{n-k})$ & $1\leq k\leq n$, and $k(n-k)$ even \\ 
	\hline
	$\SL{2n}$ & $\textrm{Sp}(n, \mathbb{R})$ & $n\geq 1$ \\
	\hline
	$\SL{2n}$ & $\textrm{SO}(n,n)$ & $n\geq 1$ \\ 
	\hline
	$\SL{2n+1}$  & $\textrm{SO}(n, n+1)$ & $n\geq 1$ \\
	\hline
	$\textrm{SO}(n+1, m)$ & $\textrm{SO}(n,m)$ & $n\geq m$, $n$ even\\
	\hline
	 &  & $G$ simple, complex;\\
	& &  $H_\mathbb{C}$ the fixed points of a \\ 
	$G_{\mathbb{C}}$& $H_{\mathbb{C}}$ & complex involution of $G_\mathbb{C}$; \\
	& &  except \\
	& & $(\textrm{SO}(4n-k, \mathbb{C})\times \textrm{SO}(k,\mathbb{C}))\backslash \textrm{SO}(4n, \mathbb{C})$ \\
	& &  for $n\geq 2$, and $k$ odd.  \\
	\hline 
\end{tabular}
	\caption{Some homogeneous spaces $H\backslash G$ without compact Clifford--Klein forms\label{benoist_table}}
\end{center}
\end{table}
Note that the first example in Table \ref{benoist_table}, with $k=1$, is $\SL{n-1}\backslash \SL{n}$ when $n$ is odd.\footnote{As reported in \cite{Witte_Iozzi}, the case $\SL 2\backslash \SL 3$, was also proven by Margulis, though never published.}  When $n$ is even, Benoist constructs non-virtually abelian groups which act properly discontinuously (though not cocompactly) on $\SL{n-1}\backslash \SL{n}$; specifically, he constructs nonabelian, free, Schottky groups.\footnote{Benoist's construction works for all reductive $H\backslash G$ admitting proper actions by non-virtually abelian $\Gamma$; he characterizes these in \cite{benoist}.}  The case for $n$ even is an interesting open question; one expects that no compact form of this space exists.

We will close this subsection with a mention of some of the work of Kassel on the problem.  She works on homogeneous spaces $H\backslash G$ with $\mathbb{R}$-$\textrm{rank}(H)=\mathbb{R}$-$\textrm{rank}(G)-1$, i.e. examples just outside the reach of the Calabi--Markus phenomenon.  In these cases (and under some natural assumptions on $G$ and $H$) she is able to describe the Cartan projection of any $\Gamma$ acting properly discontinuously on $H\backslash G$ (\cite{kassel_proper} Thm 1.1).  Two applications of the main result in \cite{kassel_proper} are a structure theorem for properly discontinuous $\Gamma$ for $diag(G')\backslash (G'\times G')$ (\cite{kassel_proper} Theorem 1.3; $diag$ refers throughout to the diagonal embedding), and a simplified proof of Benoist's nonexistence result for $\SL {n-1}\backslash \SL n$ for $n$ odd.  Kassel also extends this work to Lie groups over local fields.  We will return to some of Kassel's further work along this line in Section \ref{sec_salein_kassel}.

%
%%
%%%
%%%%
%%%%%%%%%%%%%%%%
\subsection{Results in small dimensions}

There are scattered results on the existence question when we restrict to small dimension.  I would like to mention here the case $\textrm{SL}_2(\mathbb{K})\backslash \textrm{SL}_3(\mathbb{K})$ where $\mathbb{K}$ is any of the real numbers, the complex numbers or the quaternions.  Benoist's work covers these cases; earlier, Kobayashi dealt with the latter two cases (see \cite{ko_disc}).  Oh and Witte-Morris generalized Benoist's work to homogeneous spaces of $\SL 3$ with $H$ nonreductive in \cite{Oh_Witte_compact}, proving that only the obvious examples have compact forms:

\begin{thm}[Oh--Witte-Morris, \cite{Oh_Witte_compact} Prop 1.10]
Let $H$ be a closed, connected subgroup of $\textrm{SL}_3(\mathbb{K})$.  If $H\backslash \emph{SL}_3(\mathbb{K})$ has a compact form, either $H$ or $H\backslash \emph{SL}_3(\mathbb{K})$ is compact.
\end{thm}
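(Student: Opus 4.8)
The plan is to treat this as a (finite) classification problem: enumerate the closed connected subgroups $H \subseteq G := \mathrm{SL}_3(\mathbb{K})$ up to conjugacy and, for each, decide whether $H\backslash G$ admits a compact form. The unifying feature I would exploit is that for all three division algebras $\mathbb{K}\in\{\mathbb{R},\mathbb{C},\mathbb{H}\}$ the group $G$ has real rank $2$ with restricted root system of type $A_2$, so the Cartan subspace $\mathfrak{a}$ is two-dimensional with Weyl group $S_3$. Thus the Cartan projection of any subgroup lives in one fixed $2$-dimensional picture, and I can organize the argument by $\mathbb{R}\textrm{-rank}(H)\in\{0,1,2\}$ and by whether $H$ is reductive. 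First I would dispose of the two alternatives asserted by the theorem: if $H$ is compact we are done, while the parabolic subgroups contain $A$ and hence have $\mathfrak{a}(H)=\mathfrak{a}$, so $H\sim G$ in the sense of Definition \ref{kob_defn}; by the remark following that definition only compact $\Gamma$ act properly, and these $H$, being stabilizers of points in the compact flag varieties of $G$, already have $H\backslash G$ compact, consistent with the second alternative.

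For the remaining $H$ -- noncompact, with $H\backslash G$ noncompact -- I would split on real rank. If $\mathbb{R}\textrm{-rank}(H)=2=\mathbb{R}\textrm{-rank}(G)$, the Calabi--Markus phenomenon (Theorem \ref{cm}) forbids an infinite proper $\Gamma$, so no compact form exists. If $\mathbb{R}\textrm{-rank}(H)=1$ and $H$ is reductive, I would compute $\mathfrak{a}(H)$ explicitly. For the two rank-one copies of $\mathrm{SL}_2$ (the principal/irreducible embedding and the $2{+}1$ reducible one) a short singular-value computation shows that $\mathfrak{a}(H)$ is the single ray through the regular direction $(1,0,-1)$, which for type $A_2$ is exactly the $W$-orbit $B_W$; Benoist's Theorem \ref{ben_thm} then rules out a compact form. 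The leftover rank-one reductive subgroups are essentially one-parameter split tori whose Cartan ray is \emph{not} the $B_W$-direction (wall rays like $(1,1,-2)$ or generic interior rays), so that $B_W\not\subset\mathfrak{a}(H)$ and Benoist's criterion does not apply directly; these are where the real work begins.

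For these leftover rank-one cases (and for the genuinely noncompact rank-zero $H$, which are noncompact only through a unipotent radical) the corank is exactly one, so I would invoke Kassel's structure theorem for the Cartan projection of properly discontinuous groups in the corank-one setting (\cite{kassel_proper}): it constrains $\mathfrak{a}(\Gamma)$ to stay near a line, so any properly acting $\Gamma$ is coarsely one-dimensional. I would convert this into a contradiction with cocompactness by a cohomological-dimension count: a compact form forces $\mathrm{cd}(\Gamma)=d(G)-d(H)$, whereas the proper action of $\Gamma$ on $G/K$ gives $\mathrm{cd}(\Gamma)\le d(G)$ and the corank-one constraint pushes the effective upper bound well below $d(G)-d(H)$ in each case. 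Nonreductive $H$ with nontrivial unipotent radical $U$ I would handle by passing to the Levi decomposition $H=L\ltimes U$ and using monotonicity of properness together with the compact-fiber structure to replace $H$ by a reductive or parabolic subgroup with controlled Cartan projection, reducing to the cases already treated. (One should note in passing that Oni\v{s}\v{c}ik's classification, as in Remark \ref{Oniscik_rmk}, already shows there are no \emph{standard} forms here; but the argument above is still required, since nonstandard forms must be excluded as well.)

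The hard part will be exactly the nonreductive subgroups together with the cocompactness direction. Computing or estimating $\mathfrak{a}(H)$ when $H$ contains unipotents is delicate, because the reductive-type hypotheses of Theorems \ref{cm} and \ref{ben_thm} fail as stated, and the logarithmic growth of Cartan projections along unipotent directions must be tracked carefully. More fundamentally, properness alone permits large proper groups -- Benoist's free Schottky groups already act properly on many of these spaces -- so the crux is to rule out such intermediate $\Gamma$ as \emph{cocompact}. Making the corank-one Cartan-projection estimate sharp enough to force the failure of cocompactness, uniformly over the three fields and over the nonreductive strata, is the genuine obstacle, and is precisely the point at which the explicit $A_2$ geometry of $\mathrm{SL}_3(\mathbb{K})$ becomes indispensable.
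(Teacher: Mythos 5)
This survey does not actually reproduce a proof of this statement: it cites \cite{Oh_Witte_compact} and points to Section 6 of \cite{Witte_Iozzi} for a streamlined argument, so your attempt can only be measured against that work. Your outline does share its overall architecture --- enumerate the closed connected subgroups, dispose of the full-rank ones by the Calabi--Markus phenomenon (Theorem \ref{cm}), control the remainder through their Cartan projections, and play properness of $\Gamma$ against the cohomological-dimension constraint coming from cocompactness --- and your treatment of the reductive cases is essentially right: both embeddings of $\mathrm{SL}_2$ have $\mathfrak{a}(H)$ equal to the $B_W$ ray in the $A_2$ picture, so Theorem \ref{ben_thm} eliminates them.

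However, the places you yourself flag as ``where the real work begins'' contain genuine errors, not merely unfinished business. First, a rank-zero noncompact subgroup (a unipotent one, say) has corank \emph{two} in the rank-two group $\mathrm{SL}_3(\mathbb{K})$, not one, so Kassel's corank-one structure theorem \cite{kassel_proper} is not applicable there; moreover that theorem, like Theorems \ref{cm} and \ref{ben_thm} and the Serre--Kobayashi dimension lemma, is stated for $H$ of reductive type, and none of them may be invoked on the nonreductive strata without substantial reworking. Second, the proposed reduction of nonreductive $H=L\ltimes U$ to its Levi part cannot work: the Cartan projection of $H$ is not controlled by that of $L$ (a purely unipotent $H$ has trivial Levi part yet an unbounded, logarithmically growing Cartan projection), and determining $\mathfrak{a}(H)$ for the nonreductive subgroups --- in particular deciding which of them satisfy $H\sim G$ in the sense of Definition \ref{kob_defn} --- is precisely the technical core of Oh and Witte-Morris's work; an entire companion paper is devoted to classifying these ``Cartan-decomposition subgroups'' of $\mathrm{SL}_3(\mathbb{R})$. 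Third, the step that converts ``$\mathfrak{a}(\Gamma)$ stays near a line'' into an upper bound on $vcd(\Gamma)$ strictly below $d(G)-d(H)$ is asserted rather than argued, and it is exactly this conversion, together with the a priori lower bound on $\dim H$ emphasized in \cite{Witte_Iozzi}, that makes the actual proof close. So the skeleton matches the literature, but the load-bearing steps are either missing or rest on theorems whose hypotheses your cases do not satisfy.
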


\noindent There is a nice treatment of this problem and a simplification of Oh and Witte-Morris's proof in Section 6 of \cite{Witte_Iozzi}.

%
%%
%%%
%%%%%%%%%%%%%%%
\subsection{A dimension criterion}

In the approaches to the problem just detailed, only the properness of the action of $\Gamma$ is considered.  In \cite{kobayashi-proper}, Kobayashi uses the ideas of Definition \ref{kob_defn} to examine conditions under which a compact form does exist.  He produces the following theorem.

\begin{thm}[\cite{kobayashi-proper}, Thm 4.7]\label{d_thm}
Let $G$ be a reductive linear group; $H$ and $L$ subgroups having finitely many components and stable under some Cartan involution $\theta$ of $G$.  If
\begin{itemize}
	\item $\mathfrak{a}(H)\cap \mathfrak{a}(L) = \{0\}$ and 
	\item $d(H)+d(L) = d(G),$
\end{itemize}
then both $H\backslash G$ and $G/L$ admit compact forms.
\end{thm}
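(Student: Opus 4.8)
The plan is to show that the hypotheses manufacture exactly the \emph{standard-form} construction recalled in the introduction, with $L$ playing the role of the group carrying the uniform lattice. Concretely, I must verify three things: that $H\cap L$ is compact, that $L$ (and hence any $\Gamma<L$) acts properly on $H\backslash G$, and that $HL$ is cocompact in $G$. The first two follow from the first hypothesis and the third from the dimension hypothesis. By the evident left--right symmetry of the assumptions (they are unchanged under swapping $H$ and $L$ and passing from $H\backslash G$ to $G/L$), it then suffices to treat $H\backslash G$; running the same construction with a uniform lattice in $H$ acting on $G/L$ produces the compact form of $G/L$.

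First I would extract everything possible from the condition $\mathfrak{a}(H)\cap\mathfrak{a}(L)=\{0\}$. Since $H$ and $L$ are $\theta$-stable of reductive type, $\mathfrak{a}(H)$ and $\mathfrak{a}(L)$ are closed cones in $\mathfrak{a}/W$, so by the criterion recorded before the statement (for closed cones, transversality is equivalent to meeting only at the origin) the hypothesis gives $\mathfrak{a}(H)\transversal\mathfrak{a}(L)$, hence $H\transversal L$, i.e. properness of the $L$-action on $H\backslash G$; as transversality passes to subsets, any $\Gamma\subset L$ also satisfies $H\transversal\Gamma$. The same hypothesis forces $H\cap L$ to be compact: it is a closed $\theta$-stable subgroup with $\mathfrak{a}(H\cap L)\subset\mathfrak{a}(H)\cap\mathfrak{a}(L)=\{0\}$, so $\mu(H\cap L)=\{0\}$ and $H\cap L\subset K$. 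Because $L$ is a reductive linear group with finitely many components it carries a uniform lattice, and by Selberg's lemma I may take $\Gamma<L$ torsion-free and still uniform; a properly discontinuous action of a discrete group has finite stabilizers, so torsion-freeness makes the $\Gamma$-action free, and $\dq$ is a manifold locally modeled on $H\backslash G$.

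The remaining, and main, point is cocompactness: I must show $HL$ is cocompact, equivalently that the double-coset space $H\backslash G/L$ is compact, for then $\dq\to H\backslash G/L$ is a fibre bundle with compact fibre $L/\Gamma$ and $\dq$ is compact. This is where the dimension hypothesis enters and where I expect the real difficulty to lie. I would argue on the Riemannian symmetric space $X=G/K$, of dimension $d(G)$, with basepoint $o=eK$: the orbits $X_H=H\cdot o$ and $X_L=L\cdot o$ are totally geodesic submanifolds through $o$ with tangent spaces $\mathfrak{h}\cap\mathfrak{p}$ and $\mathfrak{l}\cap\mathfrak{p}$, of dimensions $d(H)$ and $d(L)$. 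The first hypothesis translates infinitesimally into $(\mathfrak{h}\cap\mathfrak{p})\cap(\mathfrak{l}\cap\mathfrak{p})=\{0\}$, since a common nonzero vector conjugated into $\overline{\mathfrak{a}^+}$ would yield a nonzero element of $\mathfrak{a}(H)\cap\mathfrak{a}(L)$; together with $d(H)+d(L)=d(G)=\dim\mathfrak{p}$ this gives the complementary splitting $\mathfrak{p}=(\mathfrak{h}\cap\mathfrak{p})\oplus(\mathfrak{l}\cap\mathfrak{p})$. The hard step is to upgrade this transverse, dimension-complementary splitting at $o$ to the global statement $G=H\,C\,L$ for some compact $C\subset G$: I would combine the splitting with the properness from the previous paragraph, using the Cartan projection to bound how far the $H$- and $L$-directions can drift apart while the complementary splitting guarantees that no noncompact direction of $X$ escapes the combined sweep, so that an exhaustion of $X$ in Cartan coordinates terminates. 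Granting $G=HCL$, the space $H\backslash G/L$ is the continuous image of the compact set $C$ and hence compact; feeding this, together with the compactness of $H\cap L$ and the lattice $\Gamma$, into the standard-form construction yields a compact $\dq$, and the symmetric argument gives the compact form of $G/L$.
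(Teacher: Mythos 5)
Your skeleton is the right one and matches what the survey indicates: reduce to the standard-form construction by establishing (a) properness of the $L$-action on $H\backslash G$ via the Cartan-projection criterion, (b) compactness of $H\cap L$, and (c) cocompactness of $HL$ in $G$; then take a torsion-free uniform lattice $\Gamma<L$ (Borel, Selberg) and use the $H\leftrightarrow L$ symmetry for $G/L$. Steps (a) and (b) are carried out correctly and are exactly the content of the first hypothesis.

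The gap is step (c), which is where the hypothesis $d(H)+d(L)=d(G)$ does its work, and your proposal does not actually prove it. The infinitesimal splitting $\mathfrak{p}=(\mathfrak{h}\cap\mathfrak{p})\oplus(\mathfrak{l}\cap\mathfrak{p})$ at the basepoint $o$ of $X=G/K$ does not upgrade to $G=HCL$ by ``an exhaustion of $X$ in Cartan coordinates'': two totally geodesic submanifolds of a nonpositively curved symmetric space that meet transversally with complementary dimensions at one point can still diverge from one another, leaving regions of $X$ at unbounded distance from $HL\cdot o$, and nothing in your sketch excludes this. Properness of the $L$-action controls recurrence of $L$ back to $H$, not how much of $G$ the product $HL$ covers; your sentence ``so that an exhaustion of $X$ in Cartan coordinates terminates'' is a restatement of the claim, not a mechanism. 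The argument the survey points to is genuinely different: the Proposition stated immediately after the theorem ($H\transversal L$ implies $d(H)+d(L)\le d(G)$, with equality if and only if $H\backslash G/L$ is compact), which rests on the cohomological-dimension Lemma of Serre and Kobayashi recorded in the following subsection. For a torsion-free uniform lattice $\Gamma<L$ one has $vcd(\Gamma)=d(L)$, and $vcd(\Gamma)+d(H)\le d(G)$ with equality if and only if $\dq$ is compact; the hypothesis $d(H)+d(L)=d(G)$ then forces compactness. Some such global, topological input must replace your local tangent-space computation.
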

The first condition of this theorem arises from the properness issues discussed above.  The second is a consideration for compactness.  Its necessity follows from the following Proposition.

\begin{prop}
Let $G$ be reductive and linear with connected closed subgroups $H\transversal L$.  Then $d(H)+d(L)\leq d(G)$ with equality if and only if $H\backslash G/L$ is compact.
\end{prop}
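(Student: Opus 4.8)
The plan is to translate everything into the Cartan decomposition of $\mathfrak g$ and read off both the inequality and the compactness criterion from the totally geodesic orbits in the Riemannian symmetric space $X=G/K$. As in the setting of Theorem \ref{d_thm}, I would work with $H$ and $L$ stable under the fixed Cartan involution $\theta$ (any reductive subgroup can be conjugated into this position without changing $d(H)$, $d(L)$ or the transversality class). Write $\mathfrak g=\mathfrak k\oplus\mathfrak p$ for the corresponding eigenspace decomposition, so that $T_oX\cong\mathfrak p$ and $d(G)=\dim\mathfrak p$. For $\theta$-stable $H$ the intersection $K\cap H$ is a maximal compact subgroup, the orbit $H\cdot o$ is the totally geodesic copy of the symmetric space of $H$ sitting in $X$, and $T_o(H\cdot o)=\mathfrak h\cap\mathfrak p$; hence $d(H)=\dim(\mathfrak h\cap\mathfrak p)$, and likewise $d(L)=\dim(\mathfrak l\cap\mathfrak p)$.

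First I would prove the inequality. Taking the compact set $S=\{e\}$ in the definition of $H\transversal L$ shows that $H\cap L$ is relatively compact, hence (being closed) compact. Its Lie algebra is $\mathfrak h\cap\mathfrak l$, and a compact $\theta$-stable group has no noncompact directions, i.e. $(\mathfrak h\cap\mathfrak l)\cap\mathfrak p=\{0\}$. Therefore
\[(\mathfrak h\cap\mathfrak p)\cap(\mathfrak l\cap\mathfrak p)=(\mathfrak h\cap\mathfrak l)\cap\mathfrak p=\{0\},\]
so the two subspaces meet trivially inside $\mathfrak p$ and
\[d(H)+d(L)=\dim\big((\mathfrak h\cap\mathfrak p)\oplus(\mathfrak l\cap\mathfrak p)\big)\le\dim\mathfrak p=d(G),\]
with equality precisely when $(\mathfrak h\cap\mathfrak p)\oplus(\mathfrak l\cap\mathfrak p)=\mathfrak p$, that is, when the tangent spaces of the two orbits together span $T_oX$.

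It then remains to identify this infinitesimal spanning condition with compactness of $H\backslash G/L$. Because $H\transversal L$, the $H\times L$-action on $G$ given by $(h,\ell)\cdot g=hg\ell^{-1}$ is proper, so $H\backslash G/L$ is Hausdorff; moreover it is compact if and only if there is a compact $C\subset G$ with $G=HCL$. I would prove that equality implies compactness by showing that $\mathfrak p=(\mathfrak h\cap\mathfrak p)\oplus(\mathfrak l\cap\mathfrak p)$ forces such a decomposition $G=H\,C\,L$: the orbit map $H\times(L\cdot o)\to X$ has surjective differential at $o$, and properness together with a generalized Cartan decomposition of the kind used in Kobayashi's work upgrades this to the global statement that $H\cdot(L\cdot o)$ is cobounded in $X$. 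For the converse I would argue contrapositively: if the sum is proper, pick a unit vector $Z\in\mathfrak p$ orthogonal (for the inner product $-\kappa(\cdot,\theta\cdot)$) to $(\mathfrak h\cap\mathfrak p)+(\mathfrak l\cap\mathfrak p)$ and show that the geodesic ray $\exp(tZ)$ produces double cosets $H\exp(tZ)L$ whose Cartan projections leave every compact subset of $\mathfrak a/W$; since $\pi$ is proper, $H\backslash G/L$ cannot then be compact.

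The main obstacle is exactly this last equivalence: passing from the infinitesimal condition at the basepoint $o$ to the global assertion $G=HCL$ (equivalently, to coboundedness of $H\cdot(L\cdot o)$ in $X$). The inequality and the reduction to $(\mathfrak h\cap\mathfrak p)\oplus(\mathfrak l\cap\mathfrak p)=\mathfrak p$ are elementary linear algebra once the Cartan picture is in place, but controlling the orbits at infinity uses the properness hypothesis in an essential way, and is cleanest to handle through the Cartan projection $\pi\colon G\to\mathfrak a/W$ and a generalized Cartan decomposition.
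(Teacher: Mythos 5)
Your Cartan-decomposition setup and your proof of the inequality are correct: granting the (standard, and harmless) reduction to a common Cartan involution, the chain $(\mathfrak h\cap\mathfrak p)\cap(\mathfrak l\cap\mathfrak p)=(\mathfrak h\cap\mathfrak l)\cap\mathfrak p=\{0\}$ does give $d(H)+d(L)\le d(G)$, and it is worth noting that this half only uses the weak consequence ``$H\cap L$ compact'' of $H\transversal L$. (The survey itself states this Proposition without proof; it is quoted from Kobayashi's work, and the intended argument is the one packaged in the Lemma on virtual cohomological dimension stated immediately afterwards.) The genuine gap is in the equivalence ``equality $\Leftrightarrow$ compactness,'' which you flag as the main obstacle but do not close, and the two mechanisms you propose do not work as stated. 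For ``equality $\Rightarrow$ compact,'' surjectivity of the differential of $(h,\ell)\mapsto h\ell\cdot o$ at the identity only shows that $H\cdot(L\cdot o)$ contains a neighborhood of $o$; there is no ``generalized Cartan decomposition'' $G=HCL$ available for an arbitrary transversal pair of reductive subgroups to which you can appeal --- producing one is essentially the statement to be proved. For the converse, choosing $Z\in\mathfrak p$ orthogonal to $(\mathfrak h\cap\mathfrak p)+(\mathfrak l\cap\mathfrak p)$ controls the position of the ray $\exp(tZ)\cdot o$ only relative to the tangent spaces \emph{at the basepoint}: the set $H\cdot(L\cdot o)$ contains all translates $h\cdot(L\cdot o)$, whose tangent spaces at $h\cdot o$ are $\mathrm{Ad}(h)$-rotations of $\mathfrak l\cap\mathfrak p$, and for noncompact $H$ these sweep out directions invisible at $o$. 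So orthogonality at $o$ gives no lower bound on the distance from $\exp(tZ)\cdot o$ to $H\cdot L\cdot o$, and the claim that the corresponding double cosets escape every compact set is unsupported.

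The standard way to close the gap is homological rather than metric, and it is exactly what the survey gestures at by juxtaposing this Proposition with the Serre--Kobayashi Lemma. Choose a torsion-free uniform lattice $\Gamma<L$ (Borel plus Selberg's lemma); then $\mathrm{cd}(\Gamma)=d(L)$, and $H\backslash G/L$ is compact if and only if $H\backslash G/\Gamma$ is, since $\Gamma$ is cocompact in $L$. Because $H$ is $\theta$-stable, the Mostow decomposition exhibits $G/H$ as a vector bundle of rank $d(G)-d(H)$ over the compact base $K/(K\cap H)$, on which $\Gamma$ acts properly discontinuously and freely; the Lemma then gives $\mathrm{cd}(\Gamma)+d(H)\le d(G)$ with equality precisely when the quotient is compact (the equality case being Poincar\'e duality for the closed aspherical-up-to-compact-factor manifold $G/H/\Gamma$). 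Substituting $\mathrm{cd}(\Gamma)=d(L)$ yields the Proposition. If you prefer to keep your Riemannian framing, you must prove coboundedness of $H\cdot(L\cdot o)$ in $G/K$ under the spanning condition directly, which is genuinely harder than the linear algebra at $o$ suggests.
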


Table \ref{kob_table_1} lists some homogeneous spaces that \emph{do} admit compact forms under the criterion of Theorem \ref{d_thm} together with the relevant subgroup $L$.  We encounter here the compact forms that are evidence for Conjecture \ref{kob_conj}.  As we noted in Remark \ref{Oniscik_rmk}, up to switching $H$ and $L$, these are the only homogeneous spaces of reductive type with $G$ simple and linear admitting standard forms (up to conjugation of $H$ in $G$ and perhaps taking connected components).

\begin{table}
\begin{center}
\begin{tabular}{|c|c|c|}
\hline
	$G$ & $H$ & L \\
	\hline
	$\textrm{SU}(2, 2n)$ & $\textrm{Sp}(1,n)$ & $\textrm{U}(1,2n)$ \\ 
	\hline
	$\textrm{SO}(2,2n)$ & $\textrm{U}(1,n)$ & $\textrm{SO}(1,2n)$ \\
	\hline
	$\textrm{SO}(4,4n)$ & $\textrm{SO}(3,4n)$ & $\textrm{Sp}(1,n)$ \\ 
	\hline
	$\textrm{SO}(4,4)$  & $\textrm{SO}(4,1)\times \textrm{SO}(3)$ & $\textrm{Spin}(4,3)$ \\
	\hline
	$\textrm{SO}(4,3)$ & $\textrm{SO}(3,1)\times \textrm{SO}(2)$ & $\textrm{G}_{2(2)}$\\
	\hline
	 $\textrm{SO}(8,8)$  & $\textrm{SO}(7,8)$ & $\textrm{Spin}(1,8)$\\
	\hline 
	 $\textrm{SO}(8,\mathbb{C})$  & $\textrm{SO}(7,\mathbb{C})$ & $\textrm{Spin}(1,7)$\\
	\hline 
	 $\textrm{SO}(8,\mathbb{C})$  & $\textrm{SO}(7,1)$ & $\textrm{Spin}(7,\mathbb{C})$\\
	\hline 
	 $\textrm{SO}(8,\mathbb{C})$  & $\textrm{SO}(7,\mathbb{C})$ & $\textrm{Spin}(1,7)$\\
	\hline 
	$\textrm{SO}^*(8)$ & $\textrm{U}(3,1)$ & $\textrm{Spin}(1,6)$ \\
	\hline
	$\textrm{SO}^*(8)$ & $\textrm{SO}^*(6)\times \textrm{SO}^*(2)$ & $\textrm{Spin}(1,6)$ \\
	\hline
\end{tabular}
	\caption{Some homogeneous spaces $H\backslash G$ with compact Clifford--Klein forms\label{kob_table_1}}
\end{center}
\end{table}

The standard compact forms with $G=\textrm{SO}(2,2n)$ and $G=\textrm{SO}(4,4n)$ were first found by Kulkarni in \cite{kulkarni}; the rest of the examples in Table \ref{kob_table_1} are due to Kobayashi in \cite{kobayashi-proper}, \cite{ko_disc} and \cite{kobayashi_lecture}.

%
%%
%%%
%%%%%
%%%%%%%%%%%%%%%%%%%
\subsection{Virtual cohomological dimension}

The central thrust of the Calabi--Markus phenomenon and the related results above is that there must be some coherence among the sizes (measured by real rank or by dimension of the corresponding Riemannian symmetric space) of $G$, $H$, and $L$.  This idea was used further by Kulkarni in examining the spaces $\textrm{O}(p,q)\backslash \textrm{O}(p+1, q)$.  When $p$ or $q$ are 1, we recover the relativistic space forms Calabi and Markus studied in \cite{calabi-markus}.  Kulkarni proves a number of results in his article.  Toward our purposes here, he uses cohomological dimension of groups and a generalized Gauss-Bonnet formula to prove the following:

\begin{thm}[Kulkarni, \cite{kulkarni} Cor 2.10]
If $p$ and $q$ are odd, then there is no compact Clifford--Klein form of $\emph{O}(p,q)\backslash \emph{O}(p+1, q)$.
\end{thm}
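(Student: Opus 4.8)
The plan is to compute the Euler characteristic of a hypothetical compact form in two incompatible ways. Suppose $M = \dq$ is a compact Clifford--Klein form of $X := \mathrm{O}(p,q)\backslash\mathrm{O}(p+1,q) = \mathbb{S}^{p,q}$. Then $M$ is a closed pseudo-Riemannian manifold of signature $(p,q)$, hence of dimension $p+q$, carrying the locally homogeneous metric of constant sectional curvature $\kappa = +1$ inherited from $X$. Since $p$ and $q$ are both odd, $\dim M = p+q$ is even, which is exactly what makes an Euler-characteristic argument available. Before starting I would pass to a finite-sheeted cover: by Selberg's lemma $\Gamma$ has a torsion-free finite-index subgroup, and a further index-$2$ subgroup makes the cover orientable; since Euler characteristic is multiplicative in finite covers, it suffices to reach a contradiction after replacing $M$ by such a cover. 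So I assume $\Gamma$ torsion-free and $M$ oriented.

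First, the geometric computation. I would invoke the Chern--Gauss--Bonnet theorem in the pseudo-Riemannian setting: for a closed oriented manifold of even dimension $2m$ the Euler characteristic equals $(2\pi)^{-m}\int_M \mathrm{Pf}(\Omega)$, where $\Omega$ is the curvature of the Levi-Civita connection; the Pfaffian formula is insensitive to the signature of the metric. For a metric of constant sectional curvature $\kappa\neq 0$ the curvature two-form is algebraically determined, and its Pfaffian is a nonzero numerical multiple of $\kappa^{m}$ times the (nowhere-vanishing, positive) volume density of the metric. Integrating the latter over the compact manifold gives a nonzero number, so $\chi(M) = c\,\kappa^{m}\,\mathrm{Vol}(M)\neq 0$ with $c\neq 0$ and $m = (p+q)/2$.

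Second, the topological computation, where the hypothesis that $p$ is odd enters. Topologically $\mathbb{S}^{p,q}\cong S^p\times\mathbb{R}^q$, so $X$ is homotopy equivalent to $S^p$ and $H^\ast(X;\mathbb{Q})$ is $\mathbb{Q}$ in degrees $0$ and $p$ and zero otherwise. The quotient map $X\to M$ is a regular covering with (torsion-free) deck group $\Gamma$, so I would run the Cartan--Leray spectral sequence $E_2^{s,t} = H^s(\Gamma; H^t(X;\mathbb{Q})) \Rightarrow H^{s+t}(M;\mathbb{Q})$, which is concentrated in the two rows $t=0$ (trivial coefficients $\mathbb{Q}$) and $t=p$ (coefficients $\mathbb{Q}_\alpha$, the orientation character of $\Gamma$ on $H^p(X)\cong\mathbb{Q}$). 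Since Euler characteristic is preserved through a spectral sequence, $\chi(M)$ equals the Euler characteristic of the $E_2$-page, namely $\chi(\Gamma;\mathbb{Q}) + (-1)^p\,\chi(\Gamma;\mathbb{Q}_\alpha)$. Here I would use that $\Gamma$ is of type $\mathrm{FP}$ over $\mathbb{Q}$ with finite cohomological dimension --- it acts freely and properly discontinuously on the contractible symmetric space $G/K$ --- so group Euler characteristics are defined and $\chi(\Gamma;V) = (\dim_\mathbb{Q}V)\,\chi(\Gamma)$ for any finite-dimensional $\mathbb{Q}\Gamma$-module $V$. Both coefficient modules are one-dimensional, giving $\chi(M) = (1+(-1)^p)\,\chi(\Gamma)$, which vanishes because $p$ is odd.

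The two computations are contradictory: $0 = \chi(M)\neq 0$. Hence no compact form can exist, which is the claim. I expect the main obstacles to be the two ``nonvanishing/vanishing'' inputs rather than the bookkeeping. On the geometric side, one must verify that the pseudo-Riemannian Chern--Gauss--Bonnet theorem genuinely produces a nonzero constant in constant curvature (controlling the signature-dependent signs is the delicate point, though only nonvanishing is needed). On the topological side, the care is in the finiteness property of $\Gamma$: since $\Gamma$ need not be a lattice in $G$, one must justify that it is $\mathrm{FP}$ of finite cohomological dimension so that $\chi(\Gamma)$ and the identity $\chi(\Gamma;V)=(\dim V)\chi(\Gamma)$ make sense; the free cocompact action on $X$ (finite homotopy type) together with the free proper action on $G/K$ (finite dimension) is what supplies this. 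The role of the hypotheses is then clean: $p$ odd forces the topological $\chi(M)=0$, while $q$ odd --- equivalently, given $p$ odd, the evenness of $p+q$ --- is what lets Gauss--Bonnet see a nonzero $\chi(M)$.
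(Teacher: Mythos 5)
The survey gives no proof of this result --- it only records that Kulkarni's argument uses ``cohomological dimension of groups and a generalized Gauss--Bonnet formula'' --- so your proposal can only be measured against that description and against correctness. You have guessed the right two ingredients, but the way you make them collide fails: the geometric half of your argument is false. The claim that the pseudo-Riemannian Chern--Gauss--Bonnet theorem reads $\chi(M)=(2\pi)^{-m}\int_M \mathrm{Pf}(\Omega)\neq 0$, with ``the Pfaffian formula insensitive to the signature,'' breaks down precisely when $p$ and $q$ are both odd. A metric of signature $(p,q)$ splits $TM\cong\xi\oplus\eta$ into subbundles of ranks $p$ and $q$; an oriented real bundle of odd rank satisfies $2e(\xi)=0$, so $e(TM)=e(\xi)\,e(\eta)$ is torsion and $\chi(M)=\langle e(TM),[M]\rangle=0$ for \emph{every} closed manifold carrying such a metric. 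Equivalently, in Chern--Weil terms, the invariant polynomial $\mathrm{Pf}$ on $\mathfrak{so}(p,q)$ restricts to zero on the maximal compact subalgebra $\mathfrak{so}(p)\oplus\mathfrak{so}(q)$ (the Pfaffian of an odd-size antisymmetric block vanishes), so the class $[\mathrm{Pf}(\Omega/2\pi)]$ is zero rather than a nonzero multiple of $e(TM)$ with nonzero pairing. Thus the honest geometric value of $\chi(M)$ is $0$, your (correct) Cartan--Leray computation also gives $0$, and the two computations agree instead of contradicting each other.

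The repair is to extract the contradiction from the Gauss--Bonnet side alone, with no Euler characteristic as intermediary. Keep your pointwise computation: for constant curvature $\kappa\neq 0$ the $2m$-form $\mathrm{Pf}(\Omega)$ is a nonzero constant times the volume density, so $\int_M\mathrm{Pf}(\Omega)=c\,\kappa^m\,\mathrm{Vol}(M)\neq 0$ on a compact quotient (after passing to a torsion-free, orientable finite cover as you do). But by the reduction to the maximal compact just described, when $p$ and $q$ are both odd the form $\mathrm{Pf}(\Omega)$ is \emph{exact} --- its Chern--Weil class vanishes --- so its integral over a closed oriented manifold is zero. That is the contradiction; it uses $p+q$ even (so the Pfaffian is a top-degree form) and $p$ odd (so the class dies), and it needs no group cohomology at all. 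Your spectral-sequence identity $\chi(M)=(1+(-1)^p)\chi(\Gamma)=0$ is fine as a statement and is in the spirit of the ``cohomological dimension'' ingredient the survey attributes to Kulkarni, but it cannot serve as the second jaw of the vice here, since the quantity it was meant to contradict is also zero.
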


This approach has found its most complete statement in the work of Kobayashi:

\begin{thm}[Kobayashi \cite{ko_necessary}, Thm 1.5]\label{sim_thm}
Let $H\backslash G$ be of reductive type.  If there exists a reductive $H' < G$ such that $H' \sim H$ and $d(H') > d(H)$, then $H\backslash G$ has no compact form.
\end{thm}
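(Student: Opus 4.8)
The plan is to reduce the statement to a comparison of virtual cohomological dimensions, exploiting the fact that transversality is preserved under similarity. First I would pass to a torsion-free finite-index subgroup $\Gamma_0 < \Gamma$ (Selberg's lemma applies since $G$ is linear), so that $\Gamma_0$ acts freely and properly discontinuously on $H\backslash G$ with compact quotient (a finite cover of $\dq$); throughout it is the virtual cohomological dimension $\mathrm{vcd}(\Gamma)=\mathrm{cd}(\Gamma_0)$ that matters. The engine of the argument is the following cohomological-dimension principle, which is the discrete-group analogue of the Proposition of the previous subsection: if a torsion-free discrete group acts freely and properly discontinuously on a reductive-type space $H''\backslash G$, then $\mathrm{cd}(\Gamma_0)\le d(G)-d(H'')$, with equality if and only if the action is cocompact.

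Granting this principle, the deduction is short. Since $\dq$ is a compact form, $\Gamma_0$ acts freely, properly, and cocompactly on $H\backslash G$, so the equality case gives $\mathrm{cd}(\Gamma_0)=d(G)-d(H)$. On the other hand, because $H\sim H'$ and $\Gamma_0 \transversal H$ (properness), the preservation of transversality under similarity yields $\Gamma_0 \transversal H'$; that is, $\Gamma_0$ also acts properly discontinuously on $H'\backslash G$. This action need not be cocompact, so I can invoke only the inequality half of the principle, obtaining $\mathrm{cd}(\Gamma_0)\le d(G)-d(H')$. Combining the two gives $d(G)-d(H)\le d(G)-d(H')$, hence $d(H')\le d(H)$, contradicting the hypothesis $d(H')>d(H)$. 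Therefore no compact form of $H\backslash G$ can exist.

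The substance is thus the cohomological-dimension principle, and I would establish it as follows. The key geometric input is that a reductive-type space $H''\backslash G$ is diffeomorphic to $\big((H''\cap K)\backslash K\big)\times \mathbb{R}^{\,d(G)-d(H'')}$, where $K$ is a maximal compact subgroup for which $H''$ is $\theta$-stable; in particular $H''\backslash G$ is homotopy equivalent to the compact manifold $(H''\cap K)\backslash K$, and Poincar\'e duality places its compactly supported cohomology $H^{*}_c(H''\backslash G)$ in degrees $\ge d(G)-d(H'')$, with the extreme degree realized. I would then feed this into the duality comparison between $H^{*}_c$ of the $\Gamma_0$-cover $H''\backslash G$ and the cohomology $H^{*}(\Gamma_0;\mathbb{Z}\Gamma_0)$: for a free cocompact action this comparison pins down $\mathrm{cd}(\Gamma_0)$ as exactly $d(G)-d(H'')$, while relaxing cocompactness can only decrease the cohomological dimension, yielding the inequality. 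This is precisely the content established by Kobayashi (and Kobayashi--Ono) and recorded in $d(\cdot)$-form as the Proposition above.

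I expect the main obstacle to be the non-cocompact upper bound $\mathrm{cd}(\Gamma_0)\le d(G)-d(H')$, rather than the clean equality case. When the action on $H'\backslash G$ fails to be cocompact the quotient is an open manifold, and the duality/spectral-sequence bookkeeping relating $H^{*}_c$ of the cover to $\mathrm{cd}(\Gamma_0)$ becomes more delicate; one must verify that the $d(G)-d(H')$ Euclidean directions genuinely bound the cohomological dimension from above, independently of how the compact factor $(H'\cap K)\backslash K$ interacts with the $\Gamma_0$-action, and that $\Gamma_0$ still acts freely with the expected homotopy-theoretic model. Securing this inequality is the technical heart; once the similarity transfer $\Gamma_0\transversal H'$ is in hand, the remainder is formal.
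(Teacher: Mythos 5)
Your argument is correct and is essentially the paper's own: the paper deduces the theorem immediately from the Serre--Kobayashi lemma ($vcd(\Gamma)+d(H)\leq d(G)$ with equality iff $\dq$ is compact) together with the fact that transversality is preserved under similarity, which is exactly the equality-versus-inequality comparison you run (your passage to a torsion-free $\Gamma_0$ is just the standard unpacking of $vcd$). Your additional sketch of the lemma itself via the Mostow-type decomposition of $H''\backslash G$ and Poincar\'e duality goes beyond what the paper does --- it simply cites Serre and Kobayashi for that input --- but it is consistent with the cited sources.
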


From the definition of $\sim$ (\ref{kob_defn}) it is easy to check that any discrete $\Gamma$ acting properly on $H\backslash G$ will also act properly on $H'\backslash G$.  Thus, we can construct a restriction on the sizes of $G$, $H$ and $\Gamma$ for proper actions in the vein of Theorem \ref{d_thm} but with the discrete group $\Gamma$ replacing $L$.  This is supplied by the following Lemma, in which virtual cohomological dimension ($vcd$) replaces the dimension of the corresponding symmetric space.  The proof of Theorem \ref{sim_thm} from this lemma is clear. Recall that the cohomological dimension of a group $\Gamma$ is the maximal $n$ for which the group cohomology $H^n(\Gamma,M)$, where $M$ is an arbitrary $\mathbb{R}\Gamma$-module, does not vanish.  The \emph{virtual} cohomological dimension is this value for any finite-index subgroup of $\Gamma$ (see \cite{brown_c_g}).

\begin{lemma}[Serre \cite{serre_cohomologie} when $H\backslash G$ is simply connected, Kobayashi \cite{ko_necessary} in general]
If discrete $\Gamma < G$ satisfies $\Gamma \transversal H$, then
\begin{itemize}
	\item $vcd(\Gamma)+d(H)\leq d(G)$ and
	\item $vcd(\Gamma) + d(H) = d(G)$ if and only if $\dq$ is compact.
\end{itemize}
\end{lemma}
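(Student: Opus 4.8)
The plan is to transfer the question about $vcd(\Gamma)$ into the topology of the quotient manifold $\dq$, exploiting the fact that $H\backslash G$, although noncompact, is homotopy equivalent to a \emph{compact} manifold of controlled dimension. First I would reduce to $\Gamma$ torsion-free: replacing $\Gamma$ by a torsion-free subgroup of finite index is harmless, since it alters neither $vcd(\Gamma)=cd(\Gamma)$ nor, being a finite cover, the compactness of $\dq$. With $\Gamma$ torsion-free and $\Gamma\transversal H$, the action of $\Gamma$ on $W:=H\backslash G$ is free and properly discontinuous, so $N:=\dq$ is a manifold of dimension $m:=\dim G-\dim H$ and $W\to N$ is a regular covering with deck group $\Gamma$. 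The essential geometric input is the structure theory of reductive homogeneous spaces (Mostow's fibration adapted to the Cartan involution $\theta$ stabilizing $H$): $W$ is diffeomorphic to the total space of a vector bundle of rank $r:=d(G)-d(H)$ over the compact manifold $C:=(H\cap K)\backslash K$, where $K$ is the maximal compact defining $\theta$. In particular $W$ is homotopy equivalent to $C$, so with $c:=\dim K-\dim(H\cap K)$ one has $m=c+r$ and the vanishing range $H^{q}(W)=0$ for $q>c$.

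With these ranges in hand I would run the Cartan--Leray spectral sequence of the covering, $E_2^{p,q}=H^{p}(\Gamma;H^{q}(W))\Rightarrow H^{p+q}(N)$. Because $H^{q}(W)=0$ for $q>c$, any class in top degree of $N$ can only be fed by terms with $q\le c$, i.e. $p\ge r$. Hence, in the equality case, if $N$ is compact (after passing to a further finite-index subgroup to orient it, which preserves both compactness and $vcd$) then $H^{m}(N;\mathbb{Z})\neq 0$ forces $H^{p}(\Gamma;\,\cdot\,)\neq 0$ for some $p\ge r$, giving $cd(\Gamma)\ge r$. This already yields one half of the compactness criterion and shows equality is achieved precisely when the top-degree bookkeeping is saturated.

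For the reverse inequality $cd(\Gamma)\le r$ I would pass to Poincar\'e--Lefschetz duality on the oriented $m$-manifold $W$: duality converts the top-degree vanishing $H^{>c}(W)=0$ into the bottom-degree vanishing $H^{<r}_{c}(W)=0$, with $H^{r}_{c}(W)\neq 0$. Feeding this lowest-degree information through the compactly supported analogue of the Cartan--Leray sequence for $W\to N$ confines the nonvanishing group cohomology of $\Gamma$ to degrees $\le r$, giving $vcd(\Gamma)+d(H)\le d(G)$, and simultaneously detects compactness: when $N$ is noncompact it is homotopy equivalent to a CW complex of dimension $\le m-1$, so the same bookkeeping drops the estimate strictly below $r$, whence $cd(\Gamma)=r$ can only occur when $\dq$ is compact. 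The two bullets then follow together.

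The hard part will be the upper bound $vcd(\Gamma)\le d(G)-d(H)$, precisely because $d(H)$ must appear and be sharp. The naive contractible model, the symmetric space $X=G/K$ of dimension $d(G)$ on which $\Gamma$ acts freely and properly, yields only $cd(\Gamma)\le d(G)$; cutting this down by $d(H)$ is where the transversality $\Gamma\transversal H$ is genuinely used — geometrically, the totally geodesic $H$-orbit $H/(H\cap K)\subset X$ of dimension $d(H)$ absorbs exactly $d(H)$ of the directions, leaving $W\simeq C$ only $c$-dimensional up to homotopy. Making the duality argument rigorous — verifying orientability and the orientation twist of the rank-$r$ bundle, and the convergence of the compact-support spectral sequence so that the vanishing $H^{<r}_{c}(W)=0$ truly bounds $cd(\Gamma)$ — is the technical crux. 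This is what Kobayashi establishes in general; in Serre's simply connected case $C$ (hence $W$) is simply connected, so $\pi_1(N)\cong\Gamma$ and one may argue duality directly on $N$, avoiding the extension $1\to\pi_1(C)\to\pi_1(N)\to\Gamma\to 1$ that obstructs the elementary approach.
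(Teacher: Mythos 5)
First, note that the survey does not actually prove this lemma: it is stated with attributions to Serre (simply connected case) and Kobayashi \cite{ko_necessary}, so your attempt can only be judged against the arguments in those sources. Your setup is the right one and matches theirs: reduce to torsion-free $\Gamma$ by Selberg's lemma, use the Mostow fibration to write $W=H\backslash G$ as a rank-$r$ vector bundle over the compact manifold $C=(H\cap K)\backslash K$ with $r=d(G)-d(H)$ and $c=\dim C$, and run the Cartan--Leray spectral sequence $H^p(\Gamma;H^q(W))\Rightarrow H^{p+q}(N)$. Your deduction that compactness of $N$ forces $H^p(\Gamma;\cdot)\neq 0$ for some $p\geq r$ (because $H^q(W)=0$ for $q>c$) is correct and is essentially the standard argument for that implication.

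The gap is in the upper bound $cd(\Gamma)\leq r$, exactly where you locate the crux. The ``compactly supported analogue of the Cartan--Leray sequence'' you invoke, of the form $H^p(\Gamma;H^q_c(W))\Rightarrow H^{p+q}_c(N)$, does not exist for infinite, non-cocompact covers: already for $\Gamma=\mathbb{Z}$ acting on $W=\mathbb{R}^2$ by unit translation, with $N=S^1\times\mathbb{R}$, the $E_2$-page would be $\mathbb{Z}$ in total degrees $2$ and $3$, while $H^*_c(N)$ is $\mathbb{Z}$ in degrees $1$ and $2$. (The version that does exist pushes compactly supported classes forward along the covering and involves group \emph{homology}, $H_p(\Gamma;H^q_c(W))$, with a degree \emph{shift down} by $p$ -- it bounds $hd(\Gamma)$, not $cd(\Gamma)$, and the bookkeeping does not hand you $r$.) The ordinary Cartan--Leray sequence cannot rescue this either: $H^k(\Gamma;M)=E_2^{k,0}$ sits on the bottom row and can be killed by differentials $d_j$ for $2\leq j\leq c+1$, so nonvanishing of $H^k(\Gamma;M)$ with $k>r$ is not excluded by $H^{>c+r}(N)=0$; one only gets $cd(\Gamma)\leq c+r$. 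The mechanism Kobayashi actually uses goes through the intermediate space $P=\Gamma\backslash G/(H\cap K)$, which is simultaneously homotopy equivalent to $N$ (the fibers $H/(H\cap K)\cong\mathbb{R}^{d(H)}$ are contractible) and a fiber bundle with \emph{compact} fiber $C$ over the aspherical $d(G)$-manifold $\Gamma\backslash G/K$; a transfer/integration-over-the-fiber argument then embeds $H^k(\Gamma;M)$ into $H^{k+c}$ of the $(c+r)$-manifold $N$ (with local coefficients), which vanishes for $k+c>c+r$, giving $k\leq r$, and for $k+c>c+r-1$ when $N$ is noncompact, giving the strict inequality and hence the ``equality iff compact'' statement. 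That degree-raising comparison through $G/(H\cap K)$, rather than any compactly supported spectral sequence on $W\to N$, is the missing idea.
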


This approach supplies numerous other examples of homogeneous spaces without compact forms.  The examples in Table \ref{kob_table_2} are taken from \cite{ko_necessary}, Table 4.4; see that table for a number of examples involving exceptional Lie groups.

\begin{table}[ht]
\begin{center}
\begin{tabular}{|c|c|c|}
\hline
	$G$ & $H$ & $\mathfrak{h}'$ \\
	\hline
	$\SL{2n}$ & $\textrm{SO}(n,n)$ & $\mathfrak{sp}(n, \mathbb{R})$ \\
	\hline
	$\textrm{SU}^*(2n)$ & $\textrm{SO}^*(2n)$ & $\mathfrak{sp}(\lfloor \frac{n}{2} \rfloor, n-\lfloor \frac{n}{2} \rfloor)$ \\
	\hline
	$\textrm{SU}(2n, 2n)$ & $\textrm{SO}^*(4n)$ & $\mathfrak{sp}(n,n)$ \\
	\hline
	$\textrm{Sp}(2n, \mathbb{R})$ & $\textrm{U}(n,n)$ & $\mathfrak{sp}(n, \mathbb{C})$ \\
	\hline
	$\textrm{SO}(2n, 2n)$ & $\textrm{SO}(2n, \mathbb{C})$ & $\mathfrak{u}(n,n)$ \\
	\hline
	$\textrm{SO}^*(2n)$ & $\textrm{SO}^*(2p) \times \textrm{SO}^*(2q)$ & $\mathfrak{so}(2) + \mathfrak{so}^*(2n-2)$ \\
	\hline
	$\textrm{SL}_n(\mathbb{C})$ & $\textrm{SO}(n, \mathbb{C})$ & $\mathfrak{u}(\lfloor \frac{n}{2} \rfloor, n-\lfloor \frac{n}{2} \rfloor)$ \\
	\hline
	$\textrm{SO}(n, \mathbb{C})$ & $\textrm{SO}(p, \mathbb{C})\times \textrm{SO}(q, \mathbb{C})$ & $\mathfrak{so}(n-1, \mathbb{C})$ \\
	\hline
	$\textrm{SO}^*(2n)$ & $\textrm{U}(p,q) $ & $\mathfrak{so}^*(2r)$ \\
	 & $n\geq 3, \quad 3p\leq 2n\leq 6p$ & $r=\min(n, 2p+1, 2q+1)$ \\
	\hline
	$\textrm{SL}_{2n}(\mathbb{C})$ & $\textrm{SU}(n,n)$ & $\mathfrak{sp}(n, \mathbb{C})$ \\
	\hline
	$\textrm{Sp}(n,\mathbb{R})$ & $\textrm{Sp}(p, \mathbb{R})\times \textrm{Sp}(q, \mathbb{R})$ & $\mathfrak{sp}(n,\mathbb{R})$\\
	\hline
\end{tabular}
	\caption{Some homogeneous spaces $H\backslash G$ without compact Clifford--Klein forms. Throughout, $n=p+q$ and $p,q>1$. \label{kob_table_2}}
\end{center}
\end{table}

%
%%
%%%
%%%%
%%%%%%%%%%%%%%%%%%%
\subsection{Clifford algebras}

As a final entry in this catalogue of `topological' approaches, let me record the following theorem of Kobayashi and Yoshino:

\begin{thm}[Kobayashi--Yoshino, \cite{ksurvey} Thm 4.2.1]
The following triples of Lie groups satisfy the conditions of Thm \ref{d_thm}, and hence $H\backslash G$ and $G/L$ have compact forms:

%\begin{table}[!h]
%\begin{center}
\begin{tabular}{|c|c|c|}
\hline
	$G$ & $H$ & $L$ \\
%	\hline
%	$GL(1, \mathbb{R})$ & $\{1\}$ & $Spin(1,1)$ \\
%	\hline
%	$Sp(1, \mathbb{R})$ & $\{1\}$ & $Spin(1,2)$ \\
%	\hline
%	$Sp(1, \mathbb{C})$ & $\{1\}$ & $Spin(1,3)$ \\
%	\hline
%	$Sp(1, 1)$ & $Sp(1)$ & $Spin(1,4)$ \\
	\hline
	$\textrm{GL}_2(\mathbb{H})$ & $\textrm{GL}_1(\mathbb{H})$ & $\textrm{Spin}(1,5)$ \\
	\hline
	$\textrm{O}^*(8)$ & $\textrm{O}^*(6)$ & $\textrm{Spin}(1,6)$ \\
	\hline
	$\textrm{O}(8, \mathbb{C})$ & $\textrm{O}(7, \mathbb{C})$ & $\textrm{Spin}(1,7)$ \\
	\hline
	$\textrm{O}(8,8)$ & $\textrm{O}(7,8)$ & $\textrm{Spin}(1,8)$ \\
	\hline
\end{tabular}
%\end{center}
%\end{table}

\end{thm}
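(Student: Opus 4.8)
The plan is to check, for each of the four triples, the two hypotheses of Theorem \ref{d_thm}: the dimension equality $d(H)+d(L)=d(G)$ and the transversality $\mathfrak{a}(H)\cap\mathfrak{a}(L)=\{0\}$ of Cartan projections. The unifying feature is that each $L=\textrm{Spin}(1,n)$ has $\mathbb{R}$-rank one and is placed inside $G$ not by a visible matrix inclusion but through a spinor module: one realizes $G$, in its appropriate real, complex or quaternionic incarnation, as the group preserving the natural invariant form on the spin representation $S$ of $\textrm{Cl}(1,n)$, and lets $\textrm{Spin}(1,n)\subset\textrm{Cl}(1,n)^{\times}$ act on $S$. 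So the first step is to identify these embeddings and to confirm that the invariant pairing on $S$ has exactly the type needed to land $L$ in the listed $G$ --- real symmetric of signature $(8,8)$ for $\textrm{Spin}(1,8)$, complex symmetric for $\textrm{Spin}(1,7)$, the skew-Hermitian quaternionic form for $\textrm{Spin}(1,6)$, and the tautological $\mathbb{H}^2$ for $\textrm{Spin}(1,5)\cong\textrm{SL}_2(\mathbb{H})$. The subgroups $H$, in contrast, are the transparent inclusions: the vector-stabilizers $\textrm{O}(7,8)\subset\textrm{O}(8,8)$, $\textrm{O}(7,\mathbb{C})\subset\textrm{O}(8,\mathbb{C})$, $\textrm{O}^{*}(6)\subset\textrm{O}^{*}(8)$, and the scalars $\textrm{GL}_1(\mathbb{H})\subset\textrm{GL}_2(\mathbb{H})$.

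The dimension equality is then a short computation using $d(\textrm{Spin}(1,n))=n$ (its symmetric space being real hyperbolic $n$-space) together with $d=\dim G-\dim K$ for the remaining groups. One finds $d(\textrm{O}(7,8))+d(\textrm{Spin}(1,8))=56+8=64=d(\textrm{O}(8,8))$, $d(\textrm{O}(7,\mathbb{C}))+d(\textrm{Spin}(1,7))=21+7=28=d(\textrm{O}(8,\mathbb{C}))$, $d(\textrm{O}^{*}(6))+d(\textrm{Spin}(1,6))=6+6=12=d(\textrm{O}^{*}(8))$, and $d(\textrm{GL}_1(\mathbb{H}))+d(\textrm{Spin}(1,5))=1+5=6=d(\textrm{GL}_2(\mathbb{H}))$.

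For the transversality I would read off both Cartan projections from the weights of the module $S$ on which $G$ acts. Fixing coordinates $\mathfrak{a}=\{(s_1,\dots,s_r)\}$ in which the singular values of $g$ on $S$ are $e^{\pm s_i}$, the Cartan projection of $g$ is the sorted tuple $s_1\ge\cdots\ge s_r\ge 0$. The crucial point is the computation of $\mathfrak{a}(L)$ in the three $D_4$-type rows: a split (boost) element of $\textrm{Spin}(1,n)$ acts on the vector module $\mathbb{R}^{1,n}$ with rapidity $t$, so on the spin module it acts through the weights $\tfrac12(\pm1,\dots,\pm1)$, each of which restricts to $\pm\tfrac{t}{2}$ on the one split direction. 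Hence $S$ decomposes into equal-multiplicity eigenspaces for $e^{t/2}$ and $e^{-t/2}$, with no fixed vectors, and the Cartan projection sends the boost to $\tfrac{t}{2}(1,1,\dots,1)$; thus $\mathfrak{a}(L)$ is precisely the diagonal ray $\mathbb{R}_{\ge0}(1,\dots,1)$. On the other hand, each vector-stabilizer $H$ is $\theta$-stable and drops the $\mathbb{R}$-rank by exactly one, and may be arranged so that its split torus is a coordinate subtorus, giving $\mathfrak{a}(H)\subset\{s_r=0\}$. Since a tuple with all coordinates equal and $s_r=0$ is the zero tuple, $\mathfrak{a}(H)\cap\mathfrak{a}(L)=\{0\}$. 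The $\textrm{GL}_2(\mathbb{H})$ row is the elementary case: there the embeddings are tautological, $\mathfrak{a}(H)$ is the diagonal ray of the scalars and $\mathfrak{a}(L)$ the anti-diagonal ray of $\textrm{SL}_2(\mathbb{H})$, and these meet only at the origin. Theorem \ref{d_thm} then yields compact forms of $H\backslash G$ and $G/L$ in every case.

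The main obstacle is the first step, not the two verifications it feeds: one must confirm that the spin representation of each $\textrm{Spin}(1,n)$ genuinely carries an invariant form of precisely the right type to realize $L$ inside the stated $G$, and then carry out the attendant weight bookkeeping on $S$. This is exactly where the dimension-eight coincidences enter --- triality for the $D_4$ groups and the mod-$8$ periodicity of real Clifford algebras --- and these are what single out the four triples above. Once $S$ and its form are correctly identified, the $\mathbb{R}$-rank-one structure of $\textrm{Spin}(1,n)$ forces the uniform $\pm\tfrac{t}{2}$ spinor weights and hence the clean diagonal direction of $\mathfrak{a}(L)$, after which transversality against the boundary face $\mathfrak{a}(H)$ is immediate.
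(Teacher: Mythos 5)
Your outline is correct and is precisely the approach the survey ascribes to Kobayashi--Yoshino: the survey itself gives no proof of this theorem (it explicitly defers to \cite{ksurvey}), noting only that the novelty is ``the use of Clifford algebras to calculate Cartan projections and verify the first condition of Theorem \ref{d_thm},'' which is exactly what you propose, and your dimension counts and the identification of $\mathfrak{a}(L)$ with the diagonal ray versus $\mathfrak{a}(H)$ with a coordinate wall are all accurate. The one step you rightly flag as the real work --- pinning down the invariant form on each spin module via the mod-$8$ periodicity of real Clifford algebras --- is the substance of the cited argument, so your plan matches the intended proof rather than replacing it.
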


Some of these have appeared already in Table \ref{kob_table_1}, and several can be obtained by other approaches.  What is particularly interesting in this theorem is the unified treatment of the $\textrm{Spin}(1,q)$ cases and the introduction of a new technique, namely, the use of Clifford algebras to calculate Cartan projections and verify the first condition of Theorem \ref{d_thm}.  I will not attempt to survey this approach here, but instead refer the reader to \cite{ksurvey}.

%
%%
%%%
%%%%
%%%%%
%%%%%%
%%%%%%%
%%%%%%%%%%%%%%%%%%%%%%%%%%%%%%%%%%%%%%%%%%%%%
%%GEOMETRY%%%%%%%%%%
\section{Geometry}\label{sec_geom}

%%%BENOIST-LABOURIE%%%%%%%
\subsection{A contribution from symplectic geometry}\label{ben_lab}

In \cite{bl}, Benoist and Labourie provide a useful restriction on $H$ via symplectic geometry.  The restriction is algebraic -- the center of $H$ cannot be `large' in that it cannot contain any hyperbolic elements.  

A simplified version of Benoist and Labourie's proof is available in the following special case.  Suppose $T$ is a one-parameter subgroup of semisimple $G$ generated by a hyperbolic element $t$ in $\mathfrak{g}$; let $Z_G(T)$ be the centralizer of $T$.  We will show that there is no compact manifold with a $(G, G/Z_G(T))$-structure.  first, note that $Z_G(T)$ is isomorphic to a direct product $M\times T$ for some subgroup $M$ of $G$.  Then one has a fiber bundle $G/M \to G/Z_G(T)$ with $\mathbb{R}$-fibers corresponding to the fibration by $T$-orbits.  The Killing form $\langle\phantom{r},\phantom{i} \rangle$ gives a 1-form on $G/M$, namely $Y\mapsto \langle t,Y\rangle$ for any $Y\in \mathfrak{g}$, which descends to $G/Z_G(T)$.  One can show that this 1-form is a connection whose curvature is a symplectic form, specifically $\omega_t(Y,Z)=\langle t, [Y,Z]\rangle$, for $Y, Z \in \mathfrak{g}$.  As $\omega_t$ is symplectic, $\omega_t^{\wedge d}$ is the volume form on $G/Z_G(T)$.  However, after perhaps passing to a finite cover, there is a non-vanishing section of the line bundle over $G/Z_G(T)$, and $\omega_t$ will be exact -- the differential of the connection 1-form associated to that section.  This in turn implies that the volume form obtained from $\omega_t$ is exact, which is impossible for a compact manifold and proves there is no compact form.  Note that this entire argument deals with local, $G$-invariant objects, so it can be transferred over to a manifold with a $(G, G/Z_G(T))$-structure.

%hence that the volume form arising from it is exact and hence trivial. This certainly cannot be true of a volume form, hence we arrive at a contradiction to the existence of a compact form of $G/H$ when $H$ is the full centralizer of a hyperbolic element. %The one can use $X$ to build a connection on $G/G_X$ such that the curvature tensor for this connection is a symplectic form.  The volume form supplied in the usual way by this symplectic form is trivial, proving that there is no compact for of $G/G_X$.  

Benoist and Labourie adapt this argument for the more general situation $H\subseteq Z_G(T)$, providing restrictions on the center of $H$.  Their argument involves using fibrations by weight spaces for elements in $H$ to build a connection with corresponding curvature tensor which is symplectic and cohomologically trivial.  They obtain the following results.  Note that as their approach deals only with local, $G$-invariant objects, Theorem \ref{blthm} and some of its corollaries apply to compact manifolds locally modeled on $H\backslash G$, not just to Clifford--Klein forms.

\begin{thm}[Benoist--Labourie, \cite{bl} Thm 1]\label{blthm}
Let $G$ be a connected, semisimple real Lie group, $H$ a connected unimodular subgroup.  If there exists a compact manifold with a $(G, H\backslash G)$-structure, then the Lie algebra of the center of $H$ does not contain any hyperbolic elements.
\end{thm}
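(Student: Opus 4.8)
The plan is to argue by contradiction, generalizing the symplectic argument sketched above for $H=Z_G(T)$ so as to absorb the extra fibre directions present when $H$ is a proper subgroup of the centralizer. Suppose there were a nonzero hyperbolic element $t$ in the Lie algebra of the center of $H$. Since $t$ commutes with all of $\mathfrak{h}$, we have $\mathfrak{h}\subseteq\mathfrak{z}_\mathfrak{g}(t)$, equivalently $H\subseteq Z_G(T)$ with $T=\exp(\mathbb{R}t)$; as $t$ is hyperbolic, $T\cong\mathbb{R}$ and $\mathrm{ad}(t)$ is diagonalizable over $\mathbb{R}$. First I would record the weight-space decomposition $\mathfrak{g}=\mathfrak{g}_0\oplus\bigoplus_{\lambda\neq0}\mathfrak{g}_\lambda$, where $\mathfrak{g}_0=\mathfrak{z}_\mathfrak{g}(t)\supseteq\mathfrak{h}$ and $\mathfrak{n}^{\pm}=\bigoplus_{\pm\lambda>0}\mathfrak{g}_\lambda$ are the nilradicals of opposite parabolics.

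Next I would assemble the two invariant objects on $X=H\backslash G$ that drive the argument. Using the Killing form $\langle\,\cdot\,,\,\cdot\,\rangle$ and its invariance, the $2$-form $\omega_t(Y,Z)=\langle t,[Y,Z]\rangle$ is $\mathrm{Ad}(H)$-invariant (because $t$ is $\mathrm{ad}(\mathfrak{h})$-fixed), hence descends to a closed $G$-invariant $2$-form $\omega_t$ on $X$; its radical in $\mathfrak{g}$ is exactly $\mathfrak{g}_0$, so $\omega_t$ is pulled back from the base of the fibration $p:H\backslash G\to Z_G(T)\backslash G$ and is symplectic there, of rank $2d=\dim\mathfrak{n}^++\dim\mathfrak{n}^-$. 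Separately, since $G$ is semisimple (hence unimodular) and $H$ is unimodular by hypothesis, $X$ carries a nonvanishing $G$-invariant volume form $\mathrm{vol}$. The point is to factor this volume form through $\omega_t$: using that $Z_G(T)$ is reductive, hence unimodular, the fibre $H\backslash Z_G(T)$ carries an invariant volume, and I would write $\mathrm{vol}=c\,\omega_t^{\wedge d}\wedge\mu$ for a $G$-invariant complementary factor $\mu$ of fibre-degree $f=\dim Z_G(T)-\dim H$, chosen (this is the delicate point) so that $\gamma:=\omega_t^{\wedge(d-1)}\wedge\mu$ is closed.

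The exactness comes from the connection picture of the special case. The $1$-form $\langle t,\cdot\rangle$ is a principal connection on the $T$-bundle over $Z_G(T)\backslash G$ whose curvature is $\omega_t$; being built from $G$-invariant data, all of this transports through the defining charts to any compact $(G,X)$-manifold $N$, yielding a principal $\mathbb{R}$-bundle with connection over $N$ whose curvature is the induced form $\bar\omega_t$. As the structure group $\mathbb{R}$ is contractible, the bundle is trivial (after passing to a finite cover if orientation or the splitting $Z_G(T)\cong M\times T$ requires it), so a global section exists and its pullback $\beta$ of the connection form satisfies $\bar\omega_t=d\beta$. Then the induced volume form is $\overline{\mathrm{vol}}=c\,\bar\omega_t\wedge\bar\gamma=c\,d\beta\wedge\bar\gamma=c\,d(\beta\wedge\bar\gamma)$, using $d\bar\gamma=0$, hence exact; Stokes then gives $\int_N\overline{\mathrm{vol}}=0$, contradicting the fact that $\overline{\mathrm{vol}}$ is a volume form on the compact manifold $N$.

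I expect the main obstacle to be twofold, concentrated in the passage from the homogeneous model to $N$ and in the bookkeeping of the complementary factor. First, one must genuinely produce the \emph{closed} complementary factor $\bar\gamma$: the naive fibre-volume form need not be closed, and arranging $\omega_t^{\wedge(d-1)}\wedge\mu$ to be closed is precisely where unimodularity of $H$ and the reductive (parabolic) structure of $Z_G(T)$ enter, through the Chevalley--Eilenberg description of invariant forms. Second, and most importantly, the exactness of $\bar\omega_t$ must be secured on the non-homogeneous $N$, where no global $G$-action is available; the entire force of the argument is that $\omega_t$, the connection, and the bundle are local, $G$-invariant objects and therefore descend through the $(G,X)$-charts, so that the triviality of the fibre $\mathbb{R}$-bundle may be invoked globally over $N$.
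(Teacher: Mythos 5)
Your strategy is the one the paper sketches for the special case $H=Z_G(T)$, and you have correctly located where all of the difficulty of the general case sits; but the step you flag as ``the delicate point'' is not merely delicate --- as you have set it up, it fails, and this is a genuine gap. Take $\mu$ to be the natural $G$-invariant fibre form, i.e.\ the $\mathrm{Ad}(H)$-invariant element of $\Lambda^f(\mathfrak g/\mathfrak h)^*$ restricting to the fibre volume on $\mathfrak g_0/\mathfrak h$ and annihilated by $\mathfrak n^+\oplus\mathfrak n^-$. The Chevalley--Eilenberg differential of $\mu$ has only one surviving component, of bidegree $(f-1,2)$, namely $(Y_0,Y_1,\dots)\mapsto-\mu([Y_0,Y_1],\dots)$ with $Y_0,Y_1$ horizontal of opposite weights, and then
\[
\omega_t^{\wedge(d-1)}\wedge d\mu \;=\; c'\,\omega_t^{\wedge d}\wedge \iota_{\bar Z}\mu,
\qquad Z:=\sum_i [e_i,f_i]\in\mathfrak z(\mathfrak g_0),
\]
for an $\omega_t$-symplectic basis $(e_i,f_i)$ of $\mathfrak n^+\oplus\mathfrak n^-$ and a nonzero constant $c'$; equivalently, $Z$ is the $\kappa$-dual of the character $W\mapsto\sum_{\lambda>0}\lambda^{-1}\mathrm{tr}(\mathrm{ad}\,W|_{\mathfrak g_\lambda})$ of $\mathfrak g_0$. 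So your $\gamma$ is closed if and only if $Z\in\mathfrak h$, and nothing forces this: for $\mathfrak g=\mathfrak{sl}_4(\mathbb R)$ and $\mathfrak h=\mathbb R t$ with $t=\mathrm{diag}(5,1,-3,-3)$ (abelian, hence unimodular, with $t$ central and hyperbolic), one finds for diagonal $W=\mathrm{diag}(w_1,\dots,w_4)$ that $\kappa(W,Z)\propto 7w_1+5w_2$ while $\kappa(W,t)\propto 2w_1+w_2$, so $Z\notin\mathbb R t=\mathfrak h$ and $\gamma$ is not closed. Unimodularity of $H$ gives you the existence of $\mu$, but it does not kill this obstruction, which depends only on $t$.

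Consequently $\overline{\mathrm{vol}}=c\,d(\beta\wedge\bar\gamma)$ fails as written and the Stokes contradiction evaporates: you are left with $\int_N\overline{\mathrm{vol}}=c\int_N\beta\wedge d\bar\gamma$, which you cannot control since $\beta$ is not the descent of a $G$-invariant object. One could try to repair this by adding to $\gamma$ invariant correction terms of lower fibre degree (these do not alter $\omega_t\wedge\gamma$), but whether a closed representative exists in that affine family is precisely the nontrivial content of Benoist--Labourie's theorem; it is here that their actual argument --- which the survey describes only as using ``fibrations by weight spaces \dots to build a connection with \dots curvature \dots symplectic and cohomologically trivial'' --- does real work that your proposal leaves undone. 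The remaining ingredients of your write-up (the descent and degeneracy structure of $\omega_t$, the existence of the invariant volume and fibre volume from unimodularity, the triviality of the induced $\mathbb R$-bundle over $N$ and the resulting exactness of $\bar\omega_t$, and the transport of local $G$-invariant data through the $(G,X)$-charts) are correct and agree with the paper's sketch of the special case.
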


\begin{cor}[Benoist--Labourie, \cite{bl} Cor 1]
Let $G$ be algebraic, semisimple and real. If $H$ is algebraic and reductive and there exists a compact manifold with a $(G, H\backslash G)$-structure, then the center of $H$ is compact.
\end{cor}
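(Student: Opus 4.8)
The plan is to deduce the corollary directly from Theorem \ref{blthm}, translating its Lie-algebra conclusion into a compactness statement via the structure theory of reductive subgroups. The key observation is that the conclusion of Theorem \ref{blthm} concerns only $\mathfrak{z}(\mathfrak{h})$, the center of $\mathfrak{h}=\mathrm{Lie}(H)$, which is determined by the identity component $H^\circ$. Since $Z(H)$ is itself an algebraic group, it has finitely many connected components, so it suffices to prove that $Z(H)^\circ$ is compact; and the Lie algebra of $Z(H)^\circ$ is contained in $\mathfrak{z}(\mathfrak{h})$, because $Z(H)^\circ$ centralizes $H$ and hence $\mathfrak{h}$.

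First I would arrange the hypotheses of Theorem \ref{blthm}. A reductive group is unimodular, so the only gap between that theorem and our setting is connectedness of $H$. To close it I pass to $H^\circ$: the projection $H^\circ\backslash G \to H\backslash G$ is a finite $G$-equivariant covering with deck group $H/H^\circ$, and a $(G,H\backslash G)$-structure on a compact manifold lifts, after passing to a finite cover, to a $(G,H^\circ\backslash G)$-structure on a compact manifold. This is the standard behavior of $(G,X)$-structures under $G$-equivariant coverings of the model, realized by lifting the developing map through the pulled-back covering of $\tilde M$ and adjusting the holonomy by the induced action. Applying Theorem \ref{blthm} to $H^\circ$, which is connected and unimodular, then gives that $\mathfrak{z}(\mathfrak{h})$ contains no nonzero hyperbolic element.

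Next I would convert this into compactness. Since $H$ is reductive and algebraic in the semisimple algebraic group $G$, by Mostow's theorem I may choose a Cartan involution $\theta$ of $G$ with $\theta H=H$, giving the Cartan decomposition $\mathfrak{g}=\mathfrak{k}\oplus\mathfrak{p}$. As the center is characteristic, $\mathfrak{z}(\mathfrak{h})$ is $\theta$-stable and therefore splits as $\mathfrak{z}(\mathfrak{h})=(\mathfrak{z}(\mathfrak{h})\cap\mathfrak{k})\oplus(\mathfrak{z}(\mathfrak{h})\cap\mathfrak{p})$. Every nonzero $X\in\mathfrak{p}$ is hyperbolic, since $\mathrm{ad}(X)$ is self-adjoint for the positive-definite form $-\kappa(\,\cdot\,,\theta\,\cdot\,)$ and hence diagonalizable with real eigenvalues (nonzero because $\mathfrak{g}$ is centerless). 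The no-hyperbolic-elements conclusion thus forces $\mathfrak{z}(\mathfrak{h})\cap\mathfrak{p}=0$, i.e. $\mathfrak{z}(\mathfrak{h})\subseteq\mathfrak{k}$. Consequently $\mathrm{Lie}(Z(H)^\circ)\subseteq\mathfrak{z}(\mathfrak{h})\subseteq\mathfrak{k}$, so $Z(H)^\circ\subseteq K$ is a closed subgroup of the compact group $K$ and hence compact; together with the finiteness of $Z(H)/Z(H)^\circ$ this yields that $Z(H)$ is compact.

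The step I expect to be most delicate is the reduction to a connected model group: one must verify that the finite $G$-equivariant covering $H^\circ\backslash G\to H\backslash G$ genuinely transports a compact $(G,H\backslash G)$-manifold to a compact $(G,H^\circ\backslash G)$-manifold, making the developing and holonomy data explicit on the finite cover. This is the only place where hypotheses beyond the Lie algebra intervene; once it is granted, the remainder is the routine structure theory of $\theta$-stable reductive subalgebras, and I would expect it to go through cleanly.
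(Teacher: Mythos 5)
Your proof is correct, and it takes the route the paper intends: the survey states this corollary without proof, quoting it from \cite{bl}, and the natural derivation from Theorem \ref{blthm} is exactly yours --- pass to $H^\circ$ via the finite $G$-equivariant covering $H^\circ\backslash G \to H\backslash G$ (after a finite cover of $M$), put $H$ in $\theta$-stable position by Mostow's theorem, and conclude that $\mathfrak{z}(\mathfrak{h})\cap\mathfrak{p}=0$ since every nonzero element of $\mathfrak{p}$ is hyperbolic, whence $Z(H)^\circ\subseteq K$ and $Z(H)/Z(H)^\circ$ is finite by algebraicity. The only convention worth making explicit is that ``does not contain any hyperbolic elements'' in Theorem \ref{blthm} must be read as ``contains no nonzero hyperbolic element'' (since $0$ is trivially $\mathbb{R}$-diagonalizable), which is precisely what your argument uses.
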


\begin{cor}[Benoist--Labourie, \cite{bl} Cor 2]
If $G$ is connected, semisimple, with finite center and $H$ is connected and reductive and $H\backslash G$ admits a compact Clifford--Klein form, then the center of $H$ is compact.
\end{cor}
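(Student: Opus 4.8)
The plan is to deduce this corollary directly from Theorem \ref{blthm}, so first I would verify that its hypotheses are satisfied. A compact Clifford--Klein form $\dq$ is by definition a compact manifold locally modeled on $H\backslash G$; the right $G$-action on $H\backslash G$ together with the inclusion $\Gamma\hookrightarrow G$ as holonomy exhibits it as a compact manifold carrying a $(G,H\backslash G)$-structure, which is exactly the input Theorem \ref{blthm} requires. Moreover a connected reductive group is unimodular, since its Lie algebra decomposes as $\mathfrak{z}(\mathfrak{h})\oplus[\mathfrak{h},\mathfrak{h}]$ and $\mathrm{ad}(X)$ has trace zero on it. Hence Theorem \ref{blthm} applies and yields that $\mathfrak{z}(\mathfrak{h})$, the Lie algebra of the center of $H$, contains no nonzero hyperbolic element of $\mathfrak{g}$.

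Next I would translate this infinitesimal statement into compactness of the group $Z(H)$. Because $H$ is reductive in $G$, after conjugating $H$ by an element of $G$ — which alters neither the existence of a compact form nor the isomorphism type of $Z(H)$ — I may assume that $H$ is stable under a Cartan involution $\theta$, with associated decomposition $\mathfrak{g}=\mathfrak{k}\oplus\mathfrak{p}$. Then $\mathfrak{z}(\mathfrak{h})$ is $\theta$-stable and splits as $(\mathfrak{z}(\mathfrak{h})\cap\mathfrak{k})\oplus(\mathfrak{z}(\mathfrak{h})\cap\mathfrak{p})$. Every nonzero element of $\mathfrak{p}$ is hyperbolic, since $\mathrm{ad}(X)$ is symmetric for the inner product $-\kappa(\cdot,\theta\,\cdot)$ and therefore $\mathbb{R}$-diagonalizable. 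Combined with the previous paragraph, this forces $\mathfrak{z}(\mathfrak{h})\cap\mathfrak{p}=0$, i.e.\ $\mathfrak{z}(\mathfrak{h})\subseteq\mathfrak{k}$.

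From here the conclusion should follow. The analytic subgroup of $G$ with Lie algebra $\mathfrak{z}(\mathfrak{h})\subseteq\mathfrak{k}$ lies in the maximal compact subgroup $K$ (which is connected since $G$ has finite center), so the identity component $Z(H)^{\circ}=\exp\mathfrak{z}(\mathfrak{h})$, being a closed subgroup of the compact group $K$, is compact. To upgrade this to compactness of all of $Z(H)$, I would write $Z(H)=Z(H)^{\circ}\cdot Z([H,H])$ and observe that $Z([H,H])$ is finite: as $G$ has finite center, its adjoint quotient $G/Z(G)$ is linear, so the image of the connected semisimple group $[H,H]$ there has finite center, and pulling back through the finite kernel $[H,H]\cap Z(G)$ shows $Z([H,H])$ is finite as well. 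Therefore $Z(H)$ is compact.

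The main obstacle is precisely this passage from the Lie-algebra statement to the group-level one. Theorem \ref{blthm} only controls $\mathfrak{z}(\mathfrak{h})$, and converting ``no hyperbolic directions'' into compactness of the possibly disconnected group $Z(H)$ requires two ingredients: the Cartan-involution normalization, used to identify the noncompact directions of $\mathfrak{z}(\mathfrak{h})$ with exactly its $\mathfrak{p}$-part, and the finite-center hypothesis on $G$, used to bound the component group $Z(H)/Z(H)^{\circ}$ through the center of $[H,H]$. Without the finite-center assumption one could only conclude that $Z(H)^{\circ}$ is compact, which is why that hypothesis appears in the statement.
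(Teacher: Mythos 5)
Correct. The survey states this result without proof, simply as a corollary of Theorem \ref{blthm}, and your deduction from that theorem --- conjugating $H$ into $\theta$-stable position, identifying the hyperbolic directions of $\mathfrak{z}(\mathfrak{h})$ with its $\mathfrak{p}$-part so that the theorem forces $\mathfrak{z}(\mathfrak{h})\subseteq\mathfrak{k}$, and using the finite center of $G$ to control the component group of $Z(H)$ --- is exactly the intended route (it is also how Benoist and Labourie derive it in \cite{bl}). The only quibble is a misattribution of the role of the finite-center hypothesis in the last step: it is what makes $K$ \emph{compact} (the $K$ of a Cartan decomposition is connected for any connected $G$), which is the property your argument actually needs.
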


%%%%REP THEORY%%%%%%%%%%%
\subsection{Two approaches via representation theory}\label{sec_rep_thry}

In \cite{margulis}, Margulis presents an approach to the existence question built on representation theory of the groups involved.  Specifically, he gives a criterion for nonexistence based on matrix coefficients for unitary representations of $H$ on certain $L^2$ functions on $H$.  

Let $G$ be unimodular, locally compact, $K$ a compact subgroup and $H$ a closed subgroup.  Matrix coefficients enter via his definition of $(G,K,H)$-tempered actions on some space $X$; for our purposes the following is the relevant application of this definition.

\begin{defn}[$(G,K)$-tempered subgroups, \cite{margulis} Defn 2]
We call $H$ a $(G,K)$-tempered subgroup of $G$ if there exists a function $q\in L^1(H)$ such that 
\[|\langle \psi(h)w_1, w_2 \rangle| \leq q(h) \Vert w_1 \Vert \Vert w_2 \Vert\]
for all $h\in H$, all $\psi(K)$-invariant functions $w_1, w_2 \in L^2(H)$ and  all unitary representations $\psi$ of $G$ on $L^2(H)$.\footnote{For $L^1(H)$ and $L^2(H)$ the implied measure is the Haar measure on $H$.}
\end{defn}

Margulis's result is the following:

\begin{thm}[Margulis, \cite{margulis} Thm 1]
Let $G, H$ and $K$ be as above, and let $F$ be any noncompact closed subgroup of $H$.  Suppose that $H$ is $(G, K)$-tempered.  Then there is no compact form of $F\backslash G$.  (In particular, there is no compact form of $H\backslash G$.)
\end{thm}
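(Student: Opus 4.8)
The plan is to argue by contradiction: suppose some discrete $\Gamma < G$ acts properly discontinuously on $F\backslash G$ with $F\backslash G/\Gamma$ compact, and extract from this a unitary representation of $G$ whose matrix coefficients along $H$ cannot satisfy the integrable bound required by temperedness. First I would record that properness is symmetric ($\Gamma \transversal F$ iff $F\transversal\Gamma$), so that $F$ acts properly on $G/\Gamma$; since $F\backslash G/\Gamma$ is compact, this action is moreover cocompact. Because $G$ is unimodular, $G/\Gamma$ carries a $G$-invariant measure, and right translation gives a unitary representation $\rho$ of $G$ on $L^2(G/\Gamma)$. As this Hilbert space is separable, I would transport $\rho$ to a unitary representation of $G$ on $L^2(H)$, so that the defining inequality for a $(G,K)$-tempered subgroup literally applies to it. I then fix a test vector $\phi$: take a nonnegative bump function supported near a compact set $C_0$ with $F\cdot C_0 = G/\Gamma$ (which exists by cocompactness) and average it over $K$ to make it $K$-invariant.

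The contradiction should come from estimating the matrix coefficient $c(h) = \langle \rho(h)\phi,\phi\rangle$ in two ways. On one hand, temperedness of $H$ gives the pointwise bound $|c(h)| \le q(h)\,\|\phi\|^2$ with $q \in L^1(H)$, whence
\[
\int_H |c(h)|\,dh \;\le\; \|\phi\|^2\,\|q\|_{L^1(H)} \;<\; \infty .
\]
On the other hand, I expect cocompactness to force this integral to diverge: since the $F$-translates of $\operatorname{supp}\phi$ cover $G/\Gamma$ and $F$ is noncompact, the self-overlap of $\phi$ accumulates along the noncompact subgroup $F \subseteq H$, so that $c$ should stay large on a set of infinite Haar measure in $H$ and fail every $L^1(H)$ domination. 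These two conclusions are incompatible, so no such $\Gamma$ can exist. I would stress that the noncompactness of $F$ is exactly what drives the divergence: were $F$ compact, the covering region would be compact, the integral finite, and indeed compact forms would exist (take $\Gamma$ a uniform lattice in $G$).

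The hard part will be making this lower bound rigorous — turning the proper cocompact action of $F$ into a genuine failure of $L^1(H)$-integrability of $c$. The delicate point is a measure-theoretic mismatch: the covering and the natural averaging live on the subgroup $F$ (and on the discrete set $\Gamma$), which may be a null set inside $H$, whereas temperedness integrates against Haar measure on all of $H$. Resolving this is the crux: one must use that the $F$-action is cocompact, so that averaging over $F$-orbits genuinely detects the noncompact directions sitting inside $H$, and combine this with the continuity of matrix coefficients to thicken the lower bound from $F$ to a positive- (indeed infinite-) measure region of $H$. This is precisely the step where Margulis's notion of a measurably proper action, and the reduction to matrix coefficients on $L^2(H)$, do the essential work.
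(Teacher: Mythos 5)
Your setup---passing to the proper, cocompact action of $F$ on $G/\Gamma$ and testing temperedness against matrix coefficients of $K$-averaged bump functions---is the right starting point, but the plan of extracting a contradiction from a \emph{single} test vector fails at its central step: the claimed divergence of $\int_H|\langle\rho(h)\phi,\phi\rangle|\,dh$. The heuristic ``the $F$-translates of $\operatorname{supp}\phi$ cover $G/\Gamma$ and $F$ is noncompact, so the self-overlap accumulates along $F$'' is backwards. Properness of the $F$-action on $G/\Gamma$ (which is exactly the hypothesis $\Gamma \transversal F$) says that $\{f\in F: f\cdot\operatorname{supp}\phi\cap\operatorname{supp}\phi\neq\emptyset\}$ is relatively compact, so the restriction of $c(h)=\langle\rho(h)\phi,\phi\rangle$ to $F$ is \emph{compactly supported}: the translates cover $G/\Gamma$ precisely by moving away from $\operatorname{supp}\phi$, not by returning to it. Off of $F$, inside $H$, you have no control at all, and there is no reason for $c$ to be large on an infinite-measure subset of $H$; finiteness of $\int_H|c|$ is exactly what temperedness asserts and is perfectly consistent with cocompactness of the $F$-action. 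So the ``second estimate'' never materializes and no contradiction is reached.

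The actual argument (Margulis's, as sketched in Section 4.2) needs \emph{two} functions and uses the noncompactness of $F$ in the opposite way. Fix a compact $M\subset G/\Gamma$ with $FM=G/\Gamma$ and let $w_1$ be supported on a neighborhood of $M$. Because $F$ is noncompact, closed, and acts properly, there are points $x=fm$ with $f\in F$ escaping every compact subset of $H$; one takes $w_2$ supported on a compact set far from $M$ along $F$. The covering $FM=G/\Gamma$ forces a definite \emph{lower} bound on overlaps of the form $\langle\psi(f)w_1,w_2\rangle$ after translating back toward $M$, while the tempered bound $|\langle\psi(h)w_1,w_2\rangle|\le q(h)\Vert w_1\Vert \Vert w_2\Vert$ with $q\in L^1(H)$ forces such overlaps to be small, in an integrated sense, far out in $H$. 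Balancing the two shows that $G/\Gamma\setminus FM$ has positive measure, contradicting $FM=G/\Gamma$. Your closing paragraph correctly identifies the $F$-versus-$H$ measure mismatch as the delicate point, but the fix is not to ``thicken'' a lower bound coming from your single $\phi$ --- that lower bound does not exist --- it is to introduce the second, far-away test function and let the integrability of $q$ do the work.
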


The main idea of Margulis's approach is the following. Suppose there is a compact form of $H\backslash G$.  Then one may take a compact set $M\subset G/\Gamma$ such that $HM = G/\Gamma$.  Margulis constructs a pair of $L^2$ functions, one supported around $M$ and one supported on some compact set far from $M$ in the $H$-direction.  Applying the $(G,K)$-tempered condition with these functions, he shows that the measure of $G/\Gamma-HM$ is positive; in particular that $HM$ cannot be all of $G/\Gamma$.

The $(G,K)$-tempered criterion is not always easy to check, but Margulis provides several nice examples.  Further examples and more detailed exposition can be found in \cite{oh_tempered}.  First, if $G$ is connected semisimple with Kazhdan's Property $(T)$ (see, e.g. \cite{zimmer-book} Chapt. 7), $K$ maximal compact and $H$ abelian and diagonalizable, standard arguments showing exponential decay of matrix coefficients can be applied to show $H$ is $(G,K)$-tempered.  Perhaps of more interest to us here, however, are two results involving the case $G=\SL n$, $K=\textrm{SO}(n)$.  By direct computations and some knowledge of representation theory, one can show that $H=\alpha_n(\SL 2)$ is $(G,K)$-tempered, where $\alpha_n$ is the irreducible $n$-dimensional representation of $\SL 2$ and $n\geq 4$.    Likewise, for $n\geq 3$, let $\phi:L \to \SL n$ be a representation of a connected simple Lie group, with irreducible components $\phi_i$.  If the sum of the highest weights for the nontrivial $\phi_i$ is larger than the sum of the positive roots for $L$, then $\phi(L)$ is $(G,K)$-tempered.  Thus, if there are many nontrivial $\phi_i$, or high-dimensional $\phi_i$ in comparison to the `size' of $L$ (as measured by the sum of its positive roots) then $\phi(L) \backslash \SL n$ has no compact form.

It is perhaps instructive to put these results alongside the results for the examples $\SL {n-k}\backslash \SL n$ discussed in connection with Kobayashi and Benoist's results (in particular Thm \ref{ben_thm}).  For Margulis's approach, we need $\SL{n-k}$ to be quite small, but to be embedded in $\SL n$ in a `large' way -- irreducibly, perhaps.  Benoist's result holds when $\SL{n-k}$ is large ($k=1$) but is not embedded irreducibly in $\SL n$.

To close this section on `geometric' approaches to the problem, let us note a second approach that uses representation theory.  This approach is due to Shalom \cite{shalom_rigidity}, and again relies on decay of matrix coefficients, although in a different way than Margulis.  Shalom studies unitary representations of $\textrm{SO}(1,n)$ and $\textrm{SU}(1,n)$ and their cohomology.  He is able to reproduce some of the rigidity results associated to groups of higher rank in these cases.  A full description of his results is far beyond our present scope,  but presenting their application to the compact forms question will serve as a bridge between the representation-theoretic approach and the dynamical approach to be described in the next section.

Shalom proves the following theorem.  

\begin{thm}[Shalom, \cite{shalom_rigidity} Thm 1.7]
Let $G$ be a simple Lie group with finite center, $\Lambda$ a discrete, infinite subgroup of $G$ admitting a discrete embedding into $\emph{SO}(1,n)$ or $\emph{SU}(1,n)$.  Assume there exists a nonamenable, closed subgroup $L$ in $G$ with noncompact center which commutes with $\Lambda$.  Then $\Lambda \backslash G$ admits no compact quotient.
\end{thm}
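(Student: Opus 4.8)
The plan is to argue by contradiction: suppose some discrete $\Gamma<G$ makes $M=\Lambda\backslash G/\Gamma$ a compact manifold, and then play two incompatible estimates on the matrix coefficients of a naturally arising unitary representation against each other. First I would turn the hypothesis that $L$ commutes with $\Lambda$ into a group action: since $l\lambda=\lambda l$ for $l\in L$ and $\lambda\in\Lambda$, left translation by $L$ is well defined on $\Lambda\backslash G$, and it commutes with the right $\Gamma$-action, so it descends to an action of $L$ on $M$. Because $G$ is semisimple, hence unimodular, Haar measure pushes down to a finite, $L$-invariant measure on the compact space $M$, yielding a unitary representation $\rho$ of $L$ on $\mathcal H=L^2(M)$. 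The object to watch is the noncompact center $Z$ of $L$: it acts on $M$ commuting with all of $L$, and I would study the central matrix coefficients $\langle\rho(z)f,f\rangle$ as $z\to\infty$ in $Z$.

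The heart of the matter is a \emph{decay} statement for these coefficients, and this is exactly where Shalom's representation theory of $\mathrm{SO}(1,n)$ and $\mathrm{SU}(1,n)$ is meant to enter. The hypothesis that $\Lambda$ embeds discretely into a rank-one group severely constrains which unitary representations of $G$ can occur in $L^2(\Lambda\backslash G)$, while the nonamenability of $L$ supplies the spectral gap that excludes almost-invariant vectors from the relevant subrepresentation. Importing this machinery wholesale from \cite{shalom_rigidity}, one should conclude that on the orthogonal complement $\mathcal H_0$ of the $L$-invariant vectors the coefficients $\langle\rho(z)f,f\rangle$ must vanish as $z\to\infty$ in $Z$; that is, the central action is tempered there.

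Against this temperedness I would set an obstruction extracted from the compactness of $M$ itself. In Shalom's circle of ideas the absence of compact quotients is detected cohomologically: the noncompact center $Z$ acting on the compact $M$ cannot act properly, and this non-properness manufactures a nonzero class in the reduced first cohomology of $L$ with coefficients in $\mathcal H_0$ — equivalently, a genuine $Z$-almost-invariant family inside $\mathcal H_0$ — which is precisely what the decay statement forbids. The only way out would be for the entire $L$-action to factor through a compact quotient, impossible since $Z$ is noncompact and $L$ is nonamenable. The collision of this nonzero class with its forced vanishing is the contradiction that shows no such $\Gamma$ exists. I expect the main obstacle to be everything in the middle step: correctly isolating the subrepresentation of $L^2(\Lambda\backslash G)$ on which the rank-one rigidity of $\Lambda$ and the nonamenability of $L$ conspire to kill central coefficients, and then verifying that the almost-invariant family produced by compactness truly lives on that subrepresentation rather than hiding among the invariant vectors or in some untempered remnant.
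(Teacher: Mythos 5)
Your overall flavor --- act by $L$ on the compact quotient, bring in Shalom's representation theory, and derive a contradiction from a cohomological obstruction --- is in the right spirit, but the structural core of the argument is missing, and one of the two estimates you want to collide is false as stated. The paper's (admittedly very brief) sketch follows Zimmer's scheme: one studies the $L$-action on the covering $G/\Gamma \to \Lambda\backslash G/\Gamma$, whose deck group is $\Lambda$, and extracts from a measurable section a \emph{cocycle} $\alpha: L\times(\Lambda\backslash G/\Gamma)\to\Lambda$ exactly as in equation \eqref{lifteqn}. Composing $\alpha$ with the given discrete embedding $\Lambda\hookrightarrow \mathrm{SO}(1,n)$ or $\mathrm{SU}(1,n)$ is the \emph{only} place the rank-one hypothesis enters: Shalom's results on (reduced first cohomology of) unitary representations of these rank-one groups are then applied to this cocycle, not to a representation of $L$. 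In your write-up the cocycle never appears; instead you assert that discrete embeddability of the abstract group $\Lambda$ into a rank-one group ``severely constrains which unitary representations of $G$ can occur in $L^2(\Lambda\backslash G)$.'' That is a non sequitur --- the hypothesis concerns $\Lambda$ as an abstract group and says nothing about the spectral decomposition of $L^2(\Lambda\backslash G)$ as a $G$-representation --- so the mechanism by which rank-one rigidity is supposed to act on your $\mathcal{H}=L^2(M)$ is never actually established.

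Second, your decay statement does not hold in the generality you invoke it. The center $Z$ of $L$ is abelian, hence has no Howe--Moore property, and nonamenability of $L$ (even a genuine spectral gap for $L$ on $\mathcal{H}_0$) does not force $\langle\rho(z)f,f\rangle\to 0$ as $z\to\infty$ in $Z$: an $L$-subrepresentation of $\mathcal{H}_0$ with no $L$-invariant vectors can perfectly well factor through $L/Z$, leaving $Z$ with invariant vectors orthogonal to the $L$-invariants. Whatever control one gets over the central direction has to be extracted from the cocycle into $\mathrm{SO}(1,n)$ or $\mathrm{SU}(1,n)$ and Shalom's cohomological machinery for those groups, not from soft properties of $L$. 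As written, both halves of your intended contradiction --- temperedness of $Z$ on $\mathcal{H}_0$, and a nonzero reduced cohomology class forced into $\mathcal{H}_0$ by compactness --- are asserted rather than produced, and the first is simply not true without the cocycle you omitted.
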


Taking $H$ a closed Lie subgroup of $G$ containing $\Lambda$ yields results on compact forms of homogeneous spaces.  One particular example is the following:

\begin{cor}\label{shalom_cor}
Let $G=\SL{n}$, $n\geq 4$, $H=\SL 2$ embedded in the upper left-hand corner of $G$.  Then $H\backslash G$ has no compact quotient.  The same holds with $\mathbb{C}$ replacing $\mathbb{R}$.
\end{cor}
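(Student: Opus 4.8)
The plan is to derive the corollary from Shalom's theorem by locating, inside $H=\SL{2}$, a discrete subgroup $\Lambda$ of the type Shalom requires together with a companion subgroup $L$ that centralizes it, and then transferring a hypothetical compact form of $H\backslash G$ into a compact quotient of $\Lambda\backslash G$, which Shalom's theorem forbids. First I would fix $\Lambda$ to be a torsion-free cocompact lattice in $H=\SL{2}$ (a surface group; these exist by Borel). Since $\SL{2}$ is, modulo its center, $\mathrm{SO}(1,2)^0$, the inclusion $\mathrm{SO}(1,2)\hookrightarrow\mathrm{SO}(1,n)$ endows $\Lambda$ with a discrete embedding into $\mathrm{SO}(1,n)$, and $\Lambda$ is manifestly discrete and infinite in $G$. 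As $G=\SL{n}$ is simple with finite center, two of Shalom's hypotheses are immediate.

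The key structural input is the centralizer $L:=Z_G(H)^0$. A direct block computation, using that $\SL{2}$ acts irreducibly on the first two coordinates (so it fixes no nonzero vector and its only commuting $2\times 2$ matrices are scalars), shows that $Z_G(H)$ consists of the matrices $\mathrm{diag}(\lambda I_2,S)$ with $\lambda\in\mathbb{R}^\times$, $S\in\mathrm{GL}_{n-2}(\mathbb{R})$ and $\lambda^2\det S=1$; hence $L$ is isomorphic to $\mathrm{GL}_{n-2}(\mathbb{R})^+$ via $\mathrm{diag}(\lambda I_2,S)\mapsto S$. Because $n\ge 4$ we have $n-2\ge 2$, so $L$ contains $\SL{n-2}$ and is therefore nonamenable, while its center contains the noncompact one-parameter group of scalar blocks $\mathrm{diag}(\mu^{-(n-2)/2}I_2,\mu I_{n-2})$, $\mu>0$; finally $L$ commutes with all of $H$, hence with $\Lambda$. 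All hypotheses of Shalom's theorem are met, and it yields that $\Lambda\backslash G$ has no compact quotient.

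It remains to transfer this conclusion to $H\backslash G$. Suppose for contradiction that $\dq$ is a compact form, so $\Gamma$ acts freely and properly discontinuously on $H\backslash G$, i.e. $\Gamma\transversal H$. Since $\Lambda\subseteq H$, the elementary monotonicity of transversality (if $B_1\subseteq B_2$ and $A\transversal B_2$, then $A\cap SB_1S\subseteq A\cap SB_2S$ is relatively compact, so $A\transversal B_1$) gives $\Gamma\transversal\Lambda$, whence $\Gamma$ acts properly discontinuously on $\Lambda\backslash G$; freeness passes down as well, since a fixed point of $\gamma$ on $\Lambda\backslash G$ forces $g\gamma g^{-1}\in\Lambda\subseteq H$, which already forced $\gamma=e$ on $H\backslash G$. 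The natural projection $\Lambda\backslash G\to H\backslash G$ is a $\Gamma$-equivariant fiber bundle whose fiber $\Lambda\backslash H$ is compact, as $\Lambda$ is cocompact in $H$. Passing to $\Gamma$-quotients produces a fiber bundle $\Lambda\backslash G/\Gamma\to\dq$ with compact fiber over the compact base $\dq$, so $\Lambda\backslash G/\Gamma$ is compact, contradicting the previous paragraph. Hence $H\backslash G$ admits no compact quotient.

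The complex case is identical once $\mathrm{SO}(1,2)^0$ is replaced by $\mathrm{PSL}_2(\mathbb{C})\cong\mathrm{SO}(1,3)^0$, which gives a discrete embedding of a cocompact lattice of $\mathrm{SL}_2(\mathbb{C})$ into $\mathrm{SO}(1,n)$ for $n\ge 3$; the centralizer computation is verbatim over $\mathbb{C}$, and $\mathrm{SL}_{n-2}(\mathbb{C})$ is again nonamenable for $n\ge 4$. I expect the only genuine subtlety to lie in the transfer step: one must be careful that both properness and freeness are inherited by the action on $\Lambda\backslash G$ and that the compact-fiber bundle argument is legitimate, both of which rest on the monotonicity of transversality and on the cocompactness of $\Lambda$ in $H$. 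The centralizer computation, though the conceptual heart of producing $L$, is routine.
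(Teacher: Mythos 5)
Your proposal is correct and follows essentially the same route as the paper, which derives the corollary from Shalom's theorem by taking $\Lambda$ to be a cocompact lattice in the closed subgroup $H=\SL 2$ (discretely embeddable in $\textrm{SO}(1,2)$, resp.\ $\textrm{SO}(1,3)$ over $\mathbb{C}$), taking $L$ to be the block-scalar centralizer $Z_G(H)^0\cong \textrm{GL}_{n-2}^+$, and transferring a hypothetical compact form of $H\backslash G$ to a compact quotient of $\Lambda\backslash G$ via the compact fiber $\Lambda\backslash H$. You have merely made explicit the details (centralizer computation, descent of properness and freeness) that the survey leaves implicit, and they check out.
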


By way of explaining how this theorem serves as a bridge between our exposition of the approach via representation theory and the approach via dynamics, let us give the briefest of overviews of the proof.  Following an idea developed by Zimmer, and which will be used repeatedly in the next section, Shalom studies the $L$-action by left-multiplication on the $\Gamma$ bundle $G/\Gamma \to \Lambda\backslash G/\Gamma$ over a compact form.  Shalom then uses the representation theory he has developed to study cocycles associated to this action to complete the proof.  A similar idea will appear again in the next section, where dynamics will fully take over with the appearance of ergodic theory.

%
%%
%%%
%%%%
%%%%%
%DYNAMICS%%%%%%%%%%
\section{Dynamics}\label{sec_dynamics}

%
%%
%%%%%%%
\subsection{The work of Zimmer, Labourie and Mozes}\label{sec_lmz}

The approach to the compact forms problem via dynamics was pioneered by Zimmer in \cite{zimmer1}.  In this paper he deals with cases where $Z_G(H)$ is a large group; in particular, he works under the following assumption:
\begin{itemize}
	\item There is a semisimple group of higher rank $J<Z_G(H)$.
\end{itemize}
An illustrative example of this case is given by taking $G=\SL{n}$ and $H=\SL{n-k}$ with $k\geq 3$, embedded in the upper left-hand corner of $G$.  Here $J=\SL{k}$ and the assumption above requires only that $k\geq 3$.

The key element to Zimmer's approach is the application of his cocycle superrigidity theorem.  Specifically, $J$ acts by left-multiplication on the $H$-bundle $G/\Gamma \to \dq$.  If $\sigma: \dq \to G/\Gamma$ is a measurable section of this bundle, $j\in J$ and $x\in \dq$, one calculates that

\begin{equation}\label{lifteqn}
	\sigma(j\cdot x) = \alpha(j, x) j \cdot \sigma(x)
\end{equation}
where $\alpha: J \times \dq \to H$ is a measurable map.  It is easy to verify that $\alpha$ satisfies the (dynamical) cocycle equation $\alpha(j_1j_2, x) = \alpha(j_1, j_2\cdot x)\alpha(j_2,x)$.  Zimmer's cocycle rigidity theorem (see \cite{zimmer-book}) states that if $J$ is semisimple of higher rank, $H$ is an algebraic group, the algebraic hull of $\alpha$ in $G$ is connected semisimple, and the $J$-action on $\dq$ is irreducible, then the cocycle $\alpha$ is equivalent to a trivial cocycle.  That is, up to a new choice of the measurable section, $\alpha$ is independent of the space variable $x$, in which case the cocycle equation reduces to the assertion that $\alpha$ is a homomorphism from $J$ to $H$.  

Two remarks are in order here.  First, the \emph{algebraic hull} is the unique (up to conjugacy) minimal algebraic group in which a cocycle equivalent to $\alpha$ takes values.  Questions about the algebraic hull of $\alpha$ are dealt with in the final section of \cite{lmz}.  The assumptions on the hull detailed above are ensured, perhaps after moving to a finite ergodic cover of $\dq$, provided that the algebraic hull is not compact.  In the compact case, simpler arguments are deployed in \cite{lmz}.   Second, the action of $J$ on $\dq$ is \emph{irreducible} for a $J$-ergodic probability measure $\mu$ if the action by any noncentral normal subgroup of $J$ is also ergodic.  This is a significant issue for the compact forms question; it has thus far been addressed by assuming that all simple factors of $J$ are higher rank on their own so that superrigidity applies to each individually.  See Question \ref{soquestion} below for a comment on a situation in which the irreducibility issue obstructs the solution of a natural compact forms question by these methods.

Zimmer first approaches the compact forms question in \cite{zimmer1} and continues with Labourie and Mozes in \cite{lmz}.  Their most general result is the following:

\begin{thm}[Labourie--Mozes--Zimmer, \cite{lmz} Thm 1.1]\label{lmzthm}
Let $H$ and $G$ be real algebraic and unimodular, and suppose that there is a semisimple $J<Z_G(H)$ all of whose simple factors are higher rank.  Suppose that $J$ is not contained in a proper, normal subgroup of $G$ and that
\begin{itemize}
	\item[(i)] the image of every nontrivial homomorphism $\tilde J \to H$ has compact centralizer in $H$, where $\tilde J$ is the universal cover of $J$;
	\item[(ii)] there is a nontrivial, $\mathbb{R}$-split, 1-parameter subgroup $B$ in $Z_G(HJ)$ that is not contained in a normal subgroup of $G$.
\end{itemize}
If there is a compact form $\dq$, then $H$ is compact.
\end{thm}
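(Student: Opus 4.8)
The plan is to run Zimmer's cocycle superrigidity to linearize the natural $H$-valued cocycle attached to the $J$-action, and then to exploit the central $\mathbb{R}$-split flow $B$ to convert this algebraic information into a contradiction whenever $H$ is noncompact. Throughout I would use that $G$ and $H$ unimodular together with $\dq$ compact furnish finite invariant measures: there is a finite $G$-invariant measure on $G/\Gamma$, and a finite measure $\mu$ on $X := \dq$ preserved by the commuting left actions of $J$ and $B$ (both of which centralize $H$).

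First I would set up the cocycle exactly as in \eqref{lifteqn}: choose a measurable section $\sigma\colon X\to G/\Gamma$ of the $H$-bundle $G/\Gamma\to X$ and form $\alpha\colon J\times X\to H$. Since every simple factor of $J$ is higher rank, Moore's ergodicity theorem makes each factor act ergodically on $G/\Gamma$, so the $J$-action on $(X,\mu)$ is irreducible in the sense required after \eqref{lifteqn} and the irreducibility obstruction disappears. After replacing $\dq$ by a finite ergodic cover and checking that the algebraic hull of $\alpha$ is treated as in the last section of \cite{lmz}, cocycle superrigidity applies: $\alpha$ is cohomologous to a homomorphism, so there is $\pi\colon\tilde J\to H$ and, after adjusting $\sigma$ using the commutativity of the $H$- and $J$-actions, one may assume $\sigma(j\cdot x)=\pi(j)\,j\cdot\sigma(x)$. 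Equivalently, $\sigma$ measurably intertwines $J\curvearrowright X$ with the left-translation action of the graph subgroup $\hat J=\{\pi(j)\,j: j\in J\}\subset HJ$ on $G/\Gamma$.

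Next I would bring in $B$. Because $B\subset Z_G(HJ)$ it commutes with $H$, hence descends to a flow on $X$ commuting with the $J$-action, and it commutes with $\hat J$ on $G/\Gamma$; write $\beta\colon B\times X\to H$ for the associated cocycle, $\sigma(b\cdot x)=\beta(b,x)\,b\cdot\sigma(x)$. Hypothesis (i) now does the decisive work: any element of $H$ commuting with $\pi(\tilde J)$ lies in the compact group $Z_H(\pi(\tilde J))$, which rigidly limits how $\beta$ can vary along the $\hat J$-directions. Because $B$ is not contained in a normal subgroup of $G$, the $B$-flow is ergodic on $G/\Gamma$ by the Mautner--Moore theorem, and because $B$ is $\mathbb{R}$-split it acts with genuine exponential, partially hyperbolic dynamics.

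The hard part will be the final contradiction. The mechanism I expect to use is recurrence versus escape: $X$ is compact, so the $B$-flow on $(X,\mu)$ is Poincar\'e recurrent, while the split, central character of $B$ forces any nontrivial $H$-valued drift recorded by $\beta$ to diverge exponentially in the noncompact directions of $H$. If $H$ were noncompact, combining this divergence with the intertwining relation would push a positive-measure piece of the compact essential image $\sigma(X)\subset G/\Gamma$ off to infinity under the $B$-flow, contradicting finiteness of the invariant measure; condition (i) closes off the only alternative, namely that the drift be absorbed into a compact centralizing subgroup of $H$. Hence $H$ must be compact. Converting the measurable output of superrigidity into an honest dynamical contradiction---in particular controlling $\beta$ uniformly enough to run the recurrence argument, and handling the compact-hull case separately---is where the real work lies, and is precisely the step carried out by the cocycle and algebraic-hull analysis of \cite{lmz}.
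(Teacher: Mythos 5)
Your setup---the measurable section $\sigma$, the cocycle $\alpha$ of \eqref{lifteqn}, superrigidity, and the passage to a $B$-cocycle constrained by hypothesis \emph{(i)}---follows the right outline, but there are two genuine problems. First, you assert at the outset that unimodularity plus compactness of $\dq$ give a \emph{finite} $G$-invariant measure on $G/\Gamma$, and you later invoke Moore/Mautner ergodicity of the simple factors of $J$ and of $B$ acting on $G/\Gamma$. Neither is available a priori: $\Gamma$ is not known to be a lattice in $G$, and when $H$ is noncompact the Haar measure on $G/\Gamma$ is exactly what one expects to be infinite; producing a finite invariant piece of it is the crux of the proof, not a starting hypothesis, so using ergodicity on $G/\Gamma$ at this stage is circular. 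The only finite measure in hand is the one on the compact quotient $\dq$, and even there ergodicity and irreducibility of the $J$-action are not automatic---the irreducibility obstruction is sidestepped not by Moore's theorem but by the hypothesis that every simple factor of $J$ is itself higher rank, so that superrigidity can be applied to each factor individually.

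Second, your closing ``recurrence versus escape'' mechanism is not how the contradiction is obtained, and as written it points in the wrong direction; you yourself defer the decisive step back to \cite{lmz}. The actual route is a dichotomy dictated by \emph{(i)}: either $\rho$ is trivial, so $\alpha$ is cohomologous to a cocycle taking values in a compact subgroup of $H$, or $\rho$ is nontrivial with compact centralizer $Z_H(\rho(J))$, in which case a short argument using \emph{(ii)} shows that the cocycle for the $B$-action takes values in that compact group. In either case one lifts the finite measure from $\dq$ to $G/\Gamma$ via $\sigma$ and averages over the relevant compact set in the $H$-fiber direction, obtaining a finite invariant measure that is, by construction, the restriction of Haar measure on $G/\Gamma$ to a positive-measure subset; only at this point does Moore's ergodicity theorem enter, showing that such a configuration forces $H$ to be compact. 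Note that the compactness of the cocycle's values is precisely what \emph{rules out} the exponential drift into the noncompact directions of $H$ that your sketch tries to exploit, so the escape-to-infinity picture cannot supply the contradiction: it comes from the averaging construction together with ergodicity, not from divergence of the $B$-cocycle.
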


In \cite{lmz}, Labourie, Mozes and Zimmer actually address a more general situation than Clifford--Klein forms.  Theorem \ref{lmzthm} applies to any compact manifold locally modeled on $H\backslash G$, the variant of Question \ref{exist_quest} mentioned in the introduction.

The proof of Theorem \ref{lmzthm} falls into two cases, as dictated by condition \emph{(i)}: either $\rho$ is trivial (i.e. the cocycle for the $J$-action takes values in a compact subgroup of $H$), or its image has compact centralizer.  In the second case a short argument allows the authors to consider the cocycle for the $B$-action (utilizing \emph{(ii)}) and conclude that \emph{it} takes values in a compact group (now $Z_H(\rho(J)$).  In either case one lifts the volume measure $m$ from $\dq$ to $G/\Gamma$ using the section $\sigma$ and averages it over a compact set in the fiber direction to obtain a finite, $J$-invariant measure which must by construction be the restriction of Haar measure on $G/\Gamma$ to a positive measure subset.  An application of Moore's ergodicity theorem (see, e.g. \cite{zimmer-book}) shows that this can only occur if $H$ is compact.

The conditions \emph{(i)} and \emph{(ii)} of Theorem \ref{lmzthm} are specific to details of the proof but not to the overall philosophy of applying superrigidity to the compact forms question.  Note that in the test case of $\SL{n-k}\backslash \SL{n}$, condition \emph{(i)} restricts application of the theorem to $k\geq n/2$.  In \cite{lz}, Labourie and Zimmer provide a separate argument to prove nonexistence for the case $k\geq 3$:

\begin{thm}[Labourie--Zimmer, \cite{lz}]\label{lzthm}
There is no compact form of $\emph{SL}_{n-k}(\mathbb{R})\backslash \emph{SL}_{n}(\mathbb{R})$ for $k\geq 3$.
\end{thm}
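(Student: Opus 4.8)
The plan is to run the superrigidity machinery of Zimmer, Labourie and Mozes but to add a new ingredient that removes the need for hypothesis \emph{(i)} of Theorem \ref{lmzthm}. Write $G = \SL{n}$, $H = \SL{n-k}$ in the upper-left corner, and take $J = \SL{k} < Z_G(H)$ sitting in the lower-right block, so that $Z_G(H) = S(\mathrm{GL}_1(\mathbb{R}) \times \mathrm{GL}_k(\mathbb{R}))$. For $k \geq 3$ the group $J$ is simple of $\mathbb{R}$-rank $k-1 \geq 2$, so cocycle superrigidity is available. First I would observe that Theorem \ref{lmzthm} already disposes of the range $k \geq n/2$: there every nontrivial homomorphism $\SL{k} \to H$ has compact centralizer, so condition \emph{(i)} holds (and \emph{(ii)} is supplied by the split central torus $B$ in $Z_G(H)$). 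The genuinely new case is therefore $3 \leq k < n/2$, where the standard corner homomorphism $\SL k \to \SL{n-k}$ has noncompact centralizer (containing $\SL{n-2k}$) and the measure-averaging step of Theorem \ref{lmzthm} stalls.

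Assume for contradiction a compact form $\dq$. Letting $J$ act on the $H$-bundle $G/\Gamma \to \dq$ produces, via a measurable section, the cocycle $\alpha \colon J \times \dq \to H$ of equation (\ref{lifteqn}). After passing to a finite ergodic cover and checking that the algebraic hull is semisimple (as in the final section of \cite{lmz}), superrigidity makes $\alpha$ cohomologous to a homomorphism $\rho\colon \SL k \to H$ up to a compact-valued correction. Absorbing the cohomology into the section gives a measurable $\tau\colon \dq \to G/\Gamma$ with $\tau(j\cdot x) = \beta(j)\,\tau(x)$ up to the compact correction, where $\beta(j) = \rho(j)\,j$. Because $\rho(J) \subset H$ commutes with $J < Z_G(H)$, the map $\beta$ is a homomorphism $\SL k \to G$, realized as the block-diagonal embedding $j \mapsto \mathrm{diag}(\rho(j), j)$; in particular $\beta(\SL k)$ is noncompact, semisimple, and generated by unipotents. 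Averaging the compact correction away and pushing the normalized volume $m$ forward by $\tau$ yields a finite $\beta(\SL k)$-invariant measure $\nu$ on $G/\Gamma$.

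If $\rho$ is trivial then $\nu$ is $\SL k$-invariant and, exactly as in Theorem \ref{lmzthm}, the averaging produces a measure absolutely continuous with respect to Haar; Moore's ergodicity theorem then forces $\nu$ to be proportional to Haar, which is finite only when $H$ is compact -- a contradiction. The hard case is $\rho$ nontrivial. Here a dimension count ($\dim \dq = k(2n-k) < n^2-1 = \dim G/\Gamma$ precisely when $k < n/2$) shows $\tau$ maps onto a lower-dimensional set, so $\nu$ is singular and the clean absolute-continuity argument is unavailable. To finish I would invoke Ratner's measure classification: since $\beta(\SL k)$ is generated by unipotents, $\nu$ is homogeneous, supported on a closed orbit of some $S \supseteq \beta(\SL k)$ with $S \cap \Gamma$ a lattice in $S$. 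One then compares the transverse dynamics on the two sides of the isomorphism $\tau$: the entropy (equivalently, the sum of positive Lyapunov exponents) of a regular element of $\beta(\SL k)$ acting on the homogeneous piece must match that of the corresponding element of $J$ acting on the compact base $\dq$, and the expansion that $\rho$ contributes in the $H$-directions is incompatible with $\nu$ living on a finite-volume homogeneous subspace of the available dimension. This forces $\rho$ trivial and returns us to the previous case.

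I expect the last step -- ruling out a nontrivial $\rho$ -- to be the main obstacle, and indeed it is exactly the difficulty that hypothesis \emph{(i)} of Theorem \ref{lmzthm} was designed to avoid. The delicate points are controlling the singular invariant measure $\nu$ (where Ratner's theorem, rather than the soft absolute-continuity plus Moore argument, seems essential) and making the entropy/Lyapunov comparison across $\tau$ rigorous, since $\tau$ need only be measurable and its fibers must be understood through the compact correction.
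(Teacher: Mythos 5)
Your setup---cocycle superrigidity for a higher-rank $J<Z_G(H)$, the graph homomorphism $j\mapsto \mathrm{diag}(\rho(j),j)$, the finite $gr(\rho)(J)$-invariant measure $\hat\mu$ on $G/\Gamma$, and the reduction of the range $k\geq n/2$ to Theorem \ref{lmzthm}---matches the paper's argument up to the point where $\rho$ is nontrivial. (The paper runs superrigidity for $J=\SL 3$ rather than for all of $\SL{k}$, but that is cosmetic.) The divergence, and the gap, is in how you dispose of a nontrivial $\rho$.

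The mechanism in \cite{lz} is algebraic, not entropic. If $\rho$ is irreducible as a representation of $J$ into $\SL{n-k}$, its centralizer in $H$ is compact and the arguments of \cite{lmz} already apply. If $\rho$ is reducible, Labourie and Zimmer conjugate the graph by a suitably chosen $h\in H$---exploiting the element of the Weyl group of $\SL{n}$ that exchanges two diagonal entries while fixing the rest---so that $h\,gr(\rho)(J)\,h^{-1}$ intersects $H$ in a \emph{noncompact} subgroup; that subgroup acts along the fibers of $G/\Gamma\to\dq$ and preserves the finite measure $h_*\hat\mu$, which is impossible. Your proposed replacement for this step---Ratner's theorem followed by an entropy/Lyapunov comparison that ``forces $\rho$ trivial''---is not an argument as written: you give no reason why the entropy of a regular element on the homogeneous piece must match anything computed on the base, nor why a mismatch would force $\rho=1$ rather than merely constrain the intermediate group $S$. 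Indeed, in the proof of Theorem \ref{mythm}, which genuinely does proceed through Ratner's theorem, the conclusion extracted from the homogeneous measure is not that $\rho$ is trivial but that there exists $L$ with $HL=G$ and $L\cap H$ compact, which is then excluded by a separate algebraic classification. So the step you yourself flag as the main obstacle is exactly the one left unfilled, and the idea that fills it in \cite{lz}---manufacturing a noncompact fiber-direction stabilizer of a finite measure via an explicit $H$-conjugation---does not appear in your proposal. (A minor slip: $k(2n-k)<n^2-1$ holds whenever $n-k\geq 2$, not only for $k<n/2$.)
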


\begin{proof}[Sketch of proof]
We prove by contradiction; $G=\SL n$ and $H = \SL {n-k}$. Cocycle superrigidity provides a trivial cocycle for the $J=\SL{3}$-action -- i.e. $\alpha=\rho$, a homomorphism from $J$ to $G$, up to an error in a compact subgroup $K$ of $H$.  Lift the ergodic measure $\mu$ to a finite measure $\sigma_*(\mu)$ on $G/\Gamma$ and average this over $K$ to provide a finite measure $\hat\mu$ which covers $\mu$.  Examining equation \eqref{lifteqn} one sees that $\hat\mu$ is invariant under $gr(\rho)(J)$, the graph of $\rho$ applied to $J$.  Likewise, $h_*\hat\mu$ is invariant under $h\phantom{.}gr(\rho)(J)h^{-1}$, for $h\in H$.  If $\rho$ is irreducible as a representation of $\SL{3}$ into $\SL{n-k}$, Labourie and Zimmer fall back on the arguments of \cite{lmz}; if it is reducible they show that a properly chosen $H$-conjugate of $gr(\rho)$ intersects $H$ in a noncompact subgroup.  This noncompact group acting in the fiber direction of the bundle $G/\Gamma \to \dq$ cannot preserve the finite measure $h_*\hat\mu$, providing a contradiction.
\end{proof}

The final step of this argument involves algebraic structure that is fairly specific to the case of $\textrm{SL}$'s -- namely that there is an element of the Weyl group for $\SL{n}$ that exchanges any two diagonal entries while leaving the rest fixed.  To remove the algebraic conditions of Theorem \ref{lmzthm} it is necessary to adapt a more general approach as below.  It also proceeds by (eventually) reducing the question to a question about subgroups of $G$.

%
%%
%%%%%%%
\subsection{A recent improvement}\label{sec_me}

The following theorem presents an improvement to Theorem \ref{lmzthm} in that it removes the algebraic conditions $(i)$ and $(ii)$ and the requirement that $J$ is not contained in a proper, normal subgroup of $G$.  To achieve this, we must sacrifice slightly by requiring $G$ simple, but this is not a terrible loss.  Indeed, all examples Labourie, Mozes and Zimmer provide are for $G$ simple.

\begin{thm}[Constantine, \cite{compact_forms} Main Theorem]\label{mythm}
Let $G$ be a connected, simple Lie group with finite center and $H$ a connected, noncompact, reductive Lie subgroup. Suppose that $J<Z_G(H)$ is a simple Lie group with real rank at least two. Then there is no compact form of $H\backslash G$.
\end{thm}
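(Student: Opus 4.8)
The plan is to argue by contradiction, following the cocycle–superrigidity strategy of Zimmer and of Labourie–Mozes–Zimmer, but exploiting the simplicity of $G$ and of $J$ to bypass the technical conditions $(i)$ and $(ii)$ of Theorem~\ref{lmzthm}. So suppose a compact form $\dq$ exists. First I would fix a finite $J$-invariant measure on $\dq$. Because $G$ is simple (hence unimodular) and $H$ is reductive (hence unimodular), $H\backslash G$ carries a right-$G$-invariant measure $\nu_0$, unique up to scale; since left translation by $Z_G(H)$ commutes with the right $G$-action, it rescales $\nu_0$ by a continuous character of $Z_G(H)$, and this character is trivial on the simple group $J$. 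Thus $\nu_0$ is left-$J$-invariant and descends to a finite $J$-invariant measure $\mu$ on the compact manifold $\dq$; replacing $\mu$ by an ergodic component, I may assume the $J$-action is ergodic. As in \eqref{lifteqn}, a measurable section $\sigma$ of the $H$-bundle $G/\Gamma\to\dq$ then produces a cocycle $\alpha\colon J\times\dq\to H$.

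The engine is cocycle superrigidity. Because $J$ is simple of real rank at least two and acts ergodically, the irreducibility hypothesis is automatic (the only noncentral normal subgroup of $J$ is $J$ itself), so after passing to a finite ergodic cover and arranging the algebraic hull to be reductive, Zimmer's theorem makes $\alpha$ cohomologous to a homomorphism $\rho\colon J\to H$, up to an error taking values in a compact subgroup $K\subset H$. The argument then splits according to whether $\rho$ is trivial.

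If $\rho$ is trivial, then after adjusting $\sigma$ the cocycle takes values in $K$. Lifting $\mu$ by $\sigma$ and averaging over a compact, positive-measure set in the fiber direction produces a finite, $J$-invariant measure $\hat\mu$ on $G/\Gamma$ which is the restriction of Haar measure $m$ to a $J$-invariant set $A$ with $0<m(A)<\infty$. Then $\mathbf{1}_A\in L^2(G/\Gamma,m)$ is a nonzero $J$-invariant vector in a unitary $G$-representation; since $G$ is simple with finite center and $J$ is noncompact, Moore's theorem forces $\mathbf{1}_A$ to be $G$-invariant, whence $m(G/\Gamma)=m(A)<\infty$ and $\Gamma$ is a lattice. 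But with $\Gamma$ a lattice and $H$ noncompact, Moore's ergodicity theorem makes the $H$-action on $G/\Gamma$ ergodic, contradicting the existence of the nonconstant $H$-invariant map $\pi\colon G/\Gamma\to\dq$. So this case cannot occur.

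The remaining case, $\rho$ nontrivial, is the heart of the matter and the main obstacle. Here $\rho(J)\subset H$ is a noncompact higher-rank semisimple subgroup centralized by $J$, and the lifted, compactly-averaged measure $\hat\mu$ is invariant under the graph $S=\{\rho(j)\,j:j\in J\}$ (using that $\rho(j)$ and $j$ commute). The goal is to manufacture from $\hat\mu$ a finite measure on $G/\Gamma$ invariant under a noncompact subgroup $N\subset H$ acting inside the fibers; since $H$ acts by left translation on each fiber $\cong H$, no such finite invariant measure can exist, which is the contradiction. This is precisely where conditions $(i)$ and $(ii)$ of Theorem~\ref{lmzthm} were used, and where I expect the real work to lie: one must analyze the commuting pair $J,\rho(J)$ inside the simple group $G$, using the structure of $Z_G(H)$ together with the Benoist–Labourie constraint that the center of $H$ is compact, in order to locate $N$ (for instance inside $Z_H(\rho(J))$, or as an $H$-conjugate of part of $S$) and to verify the requisite control of the algebraic hull. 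The rank-$\geq 2$ hypothesis on the single simple factor $J$ is what keeps superrigidity applicable without decomposing $J$, and the simplicity of $G$ is what makes the centralizer bookkeeping close up in place of the discarded hypotheses.
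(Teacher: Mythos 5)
Your setup---the finite $J$-invariant measure on $\dq$, the cocycle $\alpha$ from a measurable section, superrigidity making $\alpha$ cohomologous to a homomorphism $\rho$ up to a compact error, and the disposal of the case $\rho$ trivial via Moore's theorem---matches the framework the paper inherits from Labourie--Mozes--Zimmer. But in the case $\rho$ nontrivial, which you correctly identify as the heart of the matter, you do not actually give an argument: you propose to locate a noncompact subgroup $N\subset H$ preserving the finite fiberwise measure, ``for instance inside $Z_H(\rho(J))$ or as an $H$-conjugate of part of the graph.'' That is precisely the Labourie--Mozes--Zimmer and Labourie--Zimmer route, and it is exactly the route that requires the hypotheses this theorem discards: condition $(i)$ of Theorem \ref{lmzthm} is what forces $Z_H(\rho(J))$ to be compact so that the auxiliary $B$-cocycle of condition $(ii)$ can be brought into play, and the Labourie--Zimmer conjugation trick depends on Weyl-group elements special to $\textrm{SL}_n$. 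Invoking the Benoist--Labourie compactness of the center of $H$ does not substitute for either. So the main case is left open in your write-up.

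The paper closes this gap by a genuinely different mechanism. Superrigidity also yields that $\rho$ is rational, hence carries unipotents to unipotents; taking $J$ generated by unipotents, the graph $gr(\rho)(J)$ is generated by unipotent elements of $G$, so Ratner's measure classification applies to an ergodic component of the lifted measure $\hat\mu$ and exhibits it as Haar measure along an orbit of a closed subgroup $L\supset gr(\rho)(J)$. Compactness of the support in the fiber direction gives $L\cap H$ compact, while a Lyapunov-exponent/hyperbolic-dynamics analysis of the $J$-action on $\dq$ (supplemented by unipotent dynamics and the pseudo-Riemannian structure to handle the zero-exponent directions) shows the support of $\mu$ fills out $\dq$, forcing $HL=G$. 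The contradiction is then purely algebraic: by Remark \ref{Oniscik_rmk} and Oni\v{s}\v{c}ik's classification of decompositions of simple groups into products of reductive subgroups, no $L$ with $L\cap H$ compact and $HL=G$ exists under the stated hypotheses. This Ratner-plus-classification step is the new idea that replaces conditions $(i)$ and $(ii)$, and it is absent from your proposal.
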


\begin{proof}[Sketch of proof]
Proceed as above, applying cocycle superrigidity and producing the $gr(\rho)$-invariant measure $\hat\mu$ on $G/\Gamma$.  An additional result of cocycle superrigidity is that $\rho$ is a rational map, and hence it takes unipotent subgroups to unipotent subgroups.  We may assume that $J$ is generated by unipotents, then take an ergodic component of $\hat\mu$ for the $gr(\rho)$-action and apply Ratner's measure classification for unipotent flows to this measure (\cite{ratner}).  Ratner provides that this measure is the image of the Haar measure for some subgroup $L\supset gr(\rho)$ of $G$ along an $L$-orbit in $G/\Gamma$.

It is clear from its construction that the measure described by the $L$-orbit covers $\mu$ and its support extends only compactly in the $H$-fiber direction.  The latter fact implies that $L\cap H$ is compact.  To utilize the former, one studies the dynamics of the $J$-action on $\dq$ and the measure $\mu$.  The application of superrigidity allows one to calculate Lyapunov exponents for this action, recording exponential expansion and contraction of orbits under various flows in the $J$-action.  Standard tools from the theory of hyperbolic dynamics imply that (roughly speaking) the support of $\mu$ extends in directions with nonzero exponents.  Separate arguments using dynamics of unipotent elements in $J$ and the pseudo-Riemannian structure of $\dq$ show the support also extends in directions with zero exponents.  The result of all this is that since the $L$-orbit covers a measure whose support extends in all directions in $\dq$, one also has that $HL=G$.  The proof is completed by showing that under the conditions of the theorem, no subgroups $L$ satisfying $L\cap H$ compact and $HL=G$ exist (see Remark \ref{Oniscik_rmk}).
\end{proof}

One interesting feature of this method of proof is that it proceeds through verifying Conjecture \ref{kob_conj} for the homogeneous spaces under consideration; this holds with slightly loosened restrictions on $G$:

\begin{thm}[Constantine, \cite{compact_forms} Characterization Theorem]\label{class_thm}
Let $G$ and $H$ be as above, but with $G$ allowed to be semisimple rather than simple. Assume that there is a semisimple Lie group $J < Z_G(H)$ such that:\begin{itemize}
	\item[(1)] All simple factors of $J$ have real-rank at least two
	\item[(2)] The vector space sum of $\mathfrak{h}$ and the Lie algebra generated by all nonzero weight spaces for a Cartan subgroup $A<J$ is $\mathfrak{g}$.
\end{itemize}
Then any compact form of $H\backslash G$ is standard. 
\end{thm}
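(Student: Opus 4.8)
The plan is to run exactly the dynamical machinery behind Theorem \ref{mythm} up to, but \emph{not} including, its final Oni\v{s}\v{c}ik contradiction step (Remark \ref{Oniscik_rmk}), and then simply read off the standard-form data. As in that proof, I would let $J$ act by left translation on the $H$-bundle $G/\Gamma \to \dq$, fix a measurable section $\sigma$, and form the cocycle $\alpha\colon J\times\dq\to H$ of equation \eqref{lifteqn}. Hypothesis (1) guarantees that every simple factor of $J$ has real rank at least two, so Zimmer's cocycle superrigidity applies factor by factor and $\alpha$ is measurably cohomologous to a rational homomorphism $\rho\colon J\to H$, up to an error valued in a compact $K<H$. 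Averaging the lifted volume measure over $K$ produces a finite measure $\hat\mu$ on $G/\Gamma$ which covers the volume $\mu$ on $\dq$ and is invariant under $gr(\rho)(J)$.

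Next I would use rationality of $\rho$: it carries unipotents to unipotents, and since a higher-rank semisimple group is generated by its unipotent one-parameter subgroups, $gr(\rho)(J)$ is generated by unipotents. Ratner's measure classification \cite{ratner}, applied to an ergodic component of $\hat\mu$, then realizes that component as the homogeneous measure on a closed orbit $L\cdot x_0$ of a subgroup $L\supseteq gr(\rho)(J)$ carrying finite $L$-invariant volume. Because $\hat\mu$ was built by averaging only over a compact set in the fiber, the orbit extends compactly along $H$, forcing $H\cap L$ to be compact. This already supplies two of the three ingredients of a standard form.

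The heart of the argument — and the step where hypothesis (2) does its work — is to show $HL=G$ (equivalently $HL$ cocompact). I would compute the Lyapunov exponents of the $A$-action, $A<J$ a Cartan subgroup, on $\dq$: through the superrigid cocycle these are governed by the $\mathrm{ad}(A)$-weights on $\mathfrak{g}$, so the nonzero weight spaces are precisely the directions of nonzero exponent. Standard hyperbolic dynamics (saturation of the support of an invariant measure by stable and unstable manifolds) shows that the support of $\mu$ extends in every such direction; since the closed $L$-orbit covers $\mu$ with full support, all nonzero weight spaces are tangent to the orbit, so $\mathfrak{l}\supseteq \bigoplus_{\lambda\neq 0}\mathfrak{g}_\lambda$, and being a subalgebra $\mathfrak{l}$ contains the Lie algebra these weight spaces generate. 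Hypothesis (2) says exactly that this generated subalgebra together with $\mathfrak{h}$ exhausts $\mathfrak{g}$, whence $\mathfrak{h}+\mathfrak{l}=\mathfrak{g}$ and $HL=G$. It is here that (2) lets one bypass the delicate zero-exponent analysis (via unipotent dynamics and the pseudo-Riemannian structure) required in the simple case of Theorem \ref{mythm}, and it is what permits $G$ to be merely semisimple. I expect this paragraph to be the main obstacle: one must make rigorous the passage from ``nonzero exponent'' to ``the support, hence the $L$-orbit, genuinely extends in that tangent direction,'' and confirm that these directions assemble into the subalgebra named in (2) rather than into something strictly smaller.

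Finally, with $HL=G$ and $H\cap L$ compact in hand, write the orbit as $L\cdot x_0 = Lg\Gamma$; its finite $L$-invariant volume shows $L\cap g\Gamma g^{-1}$ is a lattice in $L$, and compactness of $\dq\cong (H\cap L)\backslash L/(L\cap g\Gamma g^{-1})$ together with $H\cap L$ compact forces this lattice to be uniform. Thus, after conjugating $\Gamma$, the triple $(H,L,\Gamma)$ meets the three bullet conditions defining a standard form, so the given compact form is standard.
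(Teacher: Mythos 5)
Your proposal follows essentially the same route as the paper: cocycle superrigidity applied factor by factor, Ratner's theorem on an ergodic component of the averaged lifted measure to produce the subgroup $L\supseteq gr(\rho)(J)$ with $H\cap L$ compact, and the Lyapunov-exponent/support analysis to get $HL=G$, stopping short of the Oni\v{s}\v{c}ik contradiction so as to read off the standard-form data. You have also correctly identified the role of hypothesis (2), namely that it substitutes for the delicate zero-exponent analysis needed in the simple case and is exactly what makes $\mathfrak{h}+\mathfrak{l}=\mathfrak{g}$ follow from saturation along the nonzero weight directions.
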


Table \ref{dynamics_table} lists a few examples of homogeneous spaces which these theorems imply do not have compact forms.  In their full generality, most arise from Theorem \ref{mythm}, with the exception of $\SL{n-k}\backslash \SL{n}$, which is due to Labourie and Zimmer as noted above.  Many are also proven by Kobayashi as noted in \cite{kobayashi_lecture}, but with stronger restrictions on $k$ and $l$.

\begin{table}
\begin{center}
\begin{tabular}{|c|c|c|}
\hline
	$G$ & $H$ & conditions \\
	\hline
	$\SL{n}$ & $\SL{n-k}$ & $k\geq 3$ \\ 
	\hline
	$\textrm{SL}_{n}(\mathbb{C})$ & $\textrm{SL}_{n-k}(\mathbb{C})$ & $k\geq 3$ \\
	\hline
	$\textrm{SO}(n, m)$ & $\textrm{SO}(n-k,m-l)$ & $k\geq 2, l\geq 3$ \\ 
	\hline
	$\textrm{PSO}(2n,\mathbb{C})$  & $ \textrm{PSO}(2(n-k),\mathbb{C})$ & $k\geq 2$ \\
	\hline
	$\textrm{SO}(2n+1, \mathbb{C})$ & $\textrm{PSO}(2n-k), \mathbb{C})$ & $k\geq 2$\\
	\hline
	$\textrm{SO}(2n+1, \mathbb{C})$ & $\textrm{SO}(2(n-k)+1, \mathbb{C})$ & $k\geq 2$ \\
	\hline
	$\textrm{SU}(p,q)$ & $\textrm{SU}(p-k, q-l)$ & $k,l\geq 2$ \\
	\hline
	$\textrm{Sp}(2m,\mathbb{R})$ & $\textrm{Sp}(2(m-k), \mathbb{R})$ & $k\geq 2$ \\
	\hline
	 &  & $H'$ a noncompact \\
	$G$ listed above & $H'$ & reductive subgroup of the \\
	& &  corresponding $H$ listed above \\ 
	\hline 
\end{tabular}
	\caption{Some homogeneous spaces $H\backslash G$ without compact Clifford--Klein forms\label{dynamics_table}}
\end{center}
\end{table}

To close this section, the author would like to pose the following question:

\begin{ques}\label{soquestion}
Does $\textrm{SO}(n-2, m-2)\backslash \textrm{SO}(n,m)$ have a compact Clifford--Klein form?
\end{ques}
The reader will notice that this case is excluded from Table \ref{dynamics_table}.  This is because the semisimple part of $Z_G(H)$ is $\textrm{SO}(2,2)$ which is semisimple and of higher rank, but not simple ($\textrm{SO}(2,2)_o \cong (\SL 2 \times \SL 2)/\{\pm (1,1)\})$.  To prove nonexistence of a compact form, one needs to address the irreducibility condition for superrigidity -- if this is accomplished, the argument which proves Theorem \ref{mythm} will apply.

%
%%
%%%
%%%%
%%%%%
%%%%%%
%%%%%%%
%%%%%%%%
%%%%%%%%%
%%%%%%%%%%
%%%%%%%%%%%
%%%%%%%%%%%%%%%%%%%%%%%%%%%%%%%%%%%%%%%%%%%%%%

\section{Deformations and moduli spaces of compact forms}\label{sec_deformations}

We close this survey by taking up some results, many of them quite recent, on the second question of the introduction.  Namely, when a homogeneous space has a compact form, what can we say about the space of all possible compact forms?  We will begin by collecting the evidence for Kobayashi's Conjecture \ref{kob_conj}.

%%%%%
\subsection{Evidence for the `standard forms' conjecture}

The reader will note that the only positive results on the existence question which we have seen so far are those provided by Table \ref{kob_table_1} and due to the algebraic construction of Theorem \ref{d_thm}.  That is, they are all standard forms.  This fact is the main empirical evidence for Conjecture \ref{kob_conj}.

We can note a small amount of further evidence.  It is (trivially) true in the Riemannian case. It is true for homogeneous spaces of $\textrm{SO}(2, n)$ by work of Oh, Witte-Morris and Iozzi which will be reported on below. Theorem \ref{class_thm} states that the conjecture is true in the stronger sense that all compact forms are standard when there is a higher-rank action present (and the rest of the requirements of that theorem are fulfilled).  Note, however, that the (now purely algebraic question) of whether any such forms exist is still open\footnote{Remark \ref{Oniscik_rmk} does not apply when $G$ is only semisimple.}, and their existence may be unlikely, given the situation when $G$ is simple.

This evidence is slight, of course; the main argument for the conjecture is the empirical one.  It seems to the author that this will be an extremely difficult conjecture to prove in general, in large part because, as we have seen, there is no over-arching approach to the problem.  

%
%%
%%%
%%%%
%%%%%
%%%%%%
%%%%%%%
%%%%%%%%
%%%%%%%%%
%%%%%%%%%%
%%%%%%%%%%%%%%%%%%%%%%%%%%%%%%%%%%%%%%%%%
\subsection{Moduli spaces of compact forms}

The reader will have noticed that Kobayashi's conjecture is not that every compact form is standard.  This simpler situation is ruled out by the following result, arrived at by Goldman, Ghys and Kobayashi.

\begin{thm}[\cite{goldman_nonstd}, \cite{ghys2}, \cite{kobayashi_def}]
There are nonstandard compact forms.
\end{thm}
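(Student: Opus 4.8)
The statement is an existence claim, so the plan is to exhibit an explicit family of compact forms only one member of which is standard. I would work with the three--dimensional anti--de Sitter example: set $G=\SL{2}\times\SL{2}$ and $H=\mathrm{diag}(\SL{2})$, with $H$ acting on $G$ by $(g_1,g_2)\cdot x=g_1 x g_2^{-1}$, so that $H\backslash G\cong\SL{2}$ is the Lorentzian space form $\mathbb{H}^{2,1}$ of Proposition \ref{signature_prop}. Fix a cocompact Fuchsian representation $\rho_0:\pi_1(\Sigma_g)\to\SL{2}$ (with $g\geq 2$) and put $L=\SL{2}\times\{1\}$. One checks at once that $HL=G$ and that $H\cap L$ is trivial, so the uniform lattice $\Gamma_0=\rho_0(\pi_1\Sigma_g)\times\{1\}$ of $L$ produces a standard compact form, in the sense of the algebraic construction preceding Conjecture \ref{kob_conj}.

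I would then deform $\Gamma_0$ to the graph $\Gamma_t=\{(\rho_0(\gamma),\sigma_t(\gamma)):\gamma\in\pi_1\Sigma_g\}$, where the first coordinate remains the fixed Fuchsian $\rho_0$ and the second coordinate is a path of representations $\sigma_t$ with $\sigma_0=\mathbf{1}$ the trivial representation. Discreteness of $\Gamma_t$ in $G$ is automatic, since its projection to the first factor is the proper embedding $\rho_0$. The two substantive points are that the $\Gamma_t$--action on $\mathbb{H}^{2,1}$ remains properly discontinuous and cocompact. Properness I would verify through the Cartan--projection criterion of Definition \ref{kob_defn}: the action is proper exactly when $\rho_0$ strictly dominates $\sigma_t$, so that $\mathfrak{a}(\Gamma_t)$ stays uniformly away from $\mathfrak{a}(H)$ at infinity. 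This holds at $t=0$, because $\rho_0$ trivially strictly dominates the trivial representation, and it persists for small $t$ since strict domination is an open condition.

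Cocompactness for small $t$ is the step I expect to be the main obstacle, and it is precisely the content of the deformation results of the cited papers. Here I would invoke the stability principle for compact $(G,X)$--structures indicated in the introduction (an Ehresmann--Thurston type theorem): the holonomies of complete, cocompact $(G,H\backslash G)$--structures form an open set, so a sufficiently small deformation of the cocompact action of $\Gamma_0$ remains cocompact. One must also arrange the deformation to break standardness, and this is compatible with smallness: arbitrarily close to $\mathbf{1}$ there exist representations $\sigma_t$ with Zariski--dense (irreducible) image, since two generic elements near the identity already generate a Zariski--dense subgroup.

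Finally, to see the resulting form is \emph{nonstandard}, I would take such a $\sigma_t$ with Zariski--dense image; then by Goursat's lemma $\Gamma_t$ is Zariski dense in $G$. If this form were standard, $\Gamma_t$ would be a uniform lattice in some closed reductive $L'<G$ with $H\cap L'$ compact. The identity component of such an $L'$, being a connected reductive subgroup of a reductive group, is Zariski closed, so Zariski density of $\Gamma_t$ forces $L'\supseteq G^{\circ}=G$. But then $H\cap L'=H$ is noncompact, a contradiction. Hence $\Gamma_t$ yields a nonstandard compact form, which proves the theorem.
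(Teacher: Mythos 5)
Your proposal is correct in outline and lands in the same place as the paper's account of this theorem: both produce nonstandard forms by deforming a standard compact form of $\mathrm{AdS}^3=\mathrm{diag}(\SL{2})\backslash(\SL{2}\times\SL{2})$, starting from $\Gamma_0=\rho_0(\pi_1\Sigma_g)\times\{1\}$ and using the Ehresmann--Thurston principle for cocompactness. The step you should not treat as routine is precisely the one you dispatch in a sentence: that properness of the $\Gamma_t$-action survives small deformations. ``Strict domination is an open condition'' is a theorem, not an observation --- it is the openness part of Salein's theorem in Section \ref{sec_salein_kassel}, whose proof goes through Klingler's completeness theorem for compact constant-curvature Lorentz manifolds \cite{klingler} together with Ehresmann--Thurston, or alternatively through the Lipschitz analysis of Gu\'eritaud--Kassel (Lemma \ref{salein_criterion} gives the ``if'' direction you need, but you must still prove that $C(\rho_0,\sigma_t)<1$ persists, i.e.\ upper semicontinuity of $C(\rho_0,\cdot)$, which is nontrivial). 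Goldman's original argument \cite{goldman_nonstd} avoids this general openness entirely: he deforms only into a hyperbolic or parabolic one-parameter subgroup $B$, taking $h(\gamma)=(v(\gamma),\rho_0(\gamma))$ with $v:\Gamma\to B$ a nonconstant homomorphism (whence his hypothesis $H^1(M;\mathbb{R})\neq 0$), and proves properness by hand for these abelian deformations; Kobayashi's Theorem \ref{kob_def_thm_1} is what later establishes openness of proper discontinuity for arbitrary small deformations. So either restrict to one-parameter deformations and prove properness directly, or cite the openness results explicitly rather than asserting them.

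Two smaller points. Your nonstandardness certificate (Zariski density of the graph via Goursat) is stronger than necessary: unboundedness of both factor projections already rules out conjugating $\Gamma_t$ into $\SL{2}\times\mathrm{SO}(2)$ or $\mathrm{SO}(2)\times\SL{2}$, which is how the paper frames standardness for this example, and Goldman's abelian deformations achieve that without Zariski density. Also, your closing reduction (``$\Gamma_t$ would be a uniform lattice in some closed reductive $L'$'') silently strengthens the definition of standard form given before Conjecture \ref{kob_conj}, which requires neither $L$ reductive nor connected; taken literally that definition is degenerate (one could take $L=\Gamma$ itself, and then your ``identity component is Zariski closed'' step says nothing). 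State which notion of standardness you are refuting --- for $\mathrm{AdS}^3$, conjugacy of $\Gamma$ into $\SL{2}\times\mathrm{SO}(2)$ or $\mathrm{SO}(2)\times\SL{2}$ --- and then Zariski density (or mere unboundedness of both projections) does settle it.
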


There are a number of proofs to this theorem stated as such.  Let us begin with Goldman's and Kobayashi's original approaches and then proceed to look at some very recent results in this direction.  Ghys's work deals with the cases of $G= \SL 2$ and $\textrm{SL}_2(\mathbb{C})$.  He constructs some interesting examples.

%
%%
%%%
%%%%%%%%%%%%%%%
\subsubsection{The work of Kobayashi and Goldman}\label{sec_goldman}

Recall that we have previously defined $\textrm{AdS}^3$ as $\mathbb{H}^{2,1}=\textrm{SO}(2,2)/\textrm{SO}(1,2)$.  It can also be defined as $(\textrm{P}\SL 2 \times \textrm{P}\SL 2)/diag(\textrm{P}\SL 2)$ up to a covering of order 2, or as $\textrm{P}\SL 2$ endowed with the metric given by its Killing form, at least up to a finite cover.  The second description is more common among the authors whose work is discussed in this subsection and fits better with their generalizations of work on $\textrm{AdS}^3$, so I will adopt it now.  The description as $\textrm{P}\SL 2$ is favored by Salein, Kassel and Gu\'eritaud below.

In \cite{goldman_nonstd}, Goldman shows that not all compact forms of $\textrm{AdS}^3$ are standard, an issue raised by the work of  Kulkarni and Raymond in \cite{kul_ray}.  In this case, the question of whether all forms are standard becomes whether all forms have $\Gamma$ conjugate into $\SL 2 \times \textrm{SO}(2)$ or $\textrm{SO}(2) \times \SL 2$.  He proves

\begin{thm}[Goldman, \cite{goldman_nonstd} Thm 1]
Let $M$ be a standard compact form of $\emph{AdS}^3$ with $H^1(M; \mathbb{R})\neq 0$.  Then there is a nontrivial deformation space of nonstandard compact form structures on $M$.
\end{thm}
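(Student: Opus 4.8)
The plan is to deform the holonomy of the standard form inside the second factor of $G=\textrm{P}\SL 2\times \textrm{P}\SL 2$ and to realize the deformations as genuine Clifford--Klein forms by combining the Ehresmann--Thurston principle for $(G,X)$-structures with the stability of proper actions. First I would fix the algebraic picture. Writing $X=\textrm{AdS}^3$ as $\dq$ with $G=\textrm{P}\SL 2\times \textrm{P}\SL 2$ and $H=diag(\textrm{P}\SL 2)$, the theorem of Kulkarni and Raymond \cite{kul_ray} presents the standard compact form $M$ (after passing to finite index and possibly switching factors) as a quotient by a graph group $\Gamma_0=\{(\rho_0(\gamma),1):\gamma\in\pi_1 M\}$, where $\rho_0:\pi_1 M\to \textrm{P}\SL 2$ is discrete, faithful and cocompact and, after absorbing the compact factor, the second coordinate is normalized to be trivial. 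Equivalently, $M$ carries a $(G,X)$-structure with holonomy $h_0=(\rho_0,1)$.

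Next I would construct the deformation. Compact $\textrm{AdS}^3$ manifolds are aspherical, so $H^1(M;\mathbb{R})=\mathrm{Hom}(\pi_1 M,\mathbb{R})$, and the hypothesis $H^1(M;\mathbb{R})\neq 0$ furnishes a nonzero homomorphism $u:\pi_1 M\to\mathbb{R}$. Fixing a noncompact one-parameter subgroup $t\mapsto a_t$ of $\textrm{P}\SL 2$ and setting $\rho_t(\gamma)=a_{t\,u(\gamma)}$, I obtain
\[ h_t=(\rho_0,\rho_t):\pi_1 M\longrightarrow G, \]
a nontrivial real-analytic path through $h_0$ in the representation variety. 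Since $\rho_0$ is Zariski dense its centralizer in $\textrm{P}\SL 2$ is trivial, so the only conjugations fixing $h_0$ act through the second factor; the path therefore remains nontrivial modulo conjugation, its first-order deformations being governed by $H^1(M;\mathbb{R})\otimes\mathfrak{psl}_2$, which is positive-dimensional.

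By the Ehresmann--Thurston deformation theorem, representations near $h_0$ are holonomies of $(G,X)$-structures on the fixed compact manifold $M$, so for small $t$ the $h_t$ are realized by a family of structures $M_t$. The crucial point is to see that each $M_t$ is an actual compact Clifford--Klein form, i.e. that $\Gamma_t=h_t(\pi_1 M)$ acts properly discontinuously and cocompactly on $X=\textrm{P}\SL 2$ with $M_t\cong\Gamma_t\backslash X$. For this I would use the stability (openness) of proper discontinuity from the Cartan-projection viewpoint of Section \ref{sec_cartan_proj}: properness of the graph $\{(\rho_0(\gamma),\rho_t(\gamma))\}$ amounts to $\rho_0$ dominating $\rho_t$ in Cartan projection, which holds for $t$ small because $\rho_0$ is uniformly expanding on $\pi_1 M$ while $\rho_t$ is $C^0$-close to the trivial representation, and cocompactness persists because $M_t$ is a complete structure on the fixed compact $M$. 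Finally, for $t\neq 0$ the factor $\rho_t$ has noncompact image, so $\Gamma_t$ cannot be conjugated into $\textrm{P}\SL 2\times\textrm{SO}(2)$ or $\textrm{SO}(2)\times\textrm{P}\SL 2$; thus $M_t$ is nonstandard, and as $t$ varies the marked length spectrum of $\rho_t$ distinguishes the structures, yielding the asserted nontrivial deformation space.

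I expect the promotion step to be the main obstacle. The Ehresmann--Thurston principle supplies only abstract $(G,X)$-structures with the prescribed holonomy, and upgrading these to honest proper, cocompact quotients in a pseudo-Riemannian setting lacking any invariant Riemannian metric requires both the openness of proper discontinuity and a completeness argument; this is precisely where the special geometry of $\textrm{AdS}^3$ and the domination estimate between $\rho_0$ and $\rho_t$ carry the weight of the proof.
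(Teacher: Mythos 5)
Your proposal follows essentially the same route as the paper's sketch of Goldman's argument: realize the standard form as a graph representation with one trivial factor, use $H^1(M;\mathbb{R})\neq 0$ to produce a nonconstant representation into a noncompact one-parameter subgroup in the other factor, invoke the Ehresmann--Thurston principle to realize nearby holonomies, and then verify that properness and cocompactness survive the small deformation. The only cosmetic differences are which factor you normalize to be trivial and your phrasing of the properness step in the Cartan-projection/domination language of Section \ref{sec_cartan_proj}, which is consistent with (and a modern gloss on) the step the survey attributes to Goldman.
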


Goldman recasts compact forms of $\SL 2$ in the language of geometric structures.  Let $G$ be a Lie group and $X$ a homogeneous space of $G$.  As noted earlier, a $(G,X)$-structure on a manifold $M$ consists of a holonomy homomorphism $hol:\pi_1(M)\to G$ and a developing map $dev:\tilde M \to X$ which is a local diffeomorphism and which is equivariant with respect to $hol$.  The pair $(hol, dev)$ is well-defined up to the natural $G$-action: namely, $g\cdot(hol, dev) = (ghol(\cdot)g^{-1}, g\cdot dev)$.  The developing map provides a well-defined $X$-coordinate patch structure on $M$.  If $G$ preserves a pseudo-Riemannian metric on $X$ and we further require that the developing map be a local isomorphism; this enforces a unique pseudo-Riemannian metric on $M$.  The $(G,X)$-structure is \emph{complete} if the developing map is a covering map, in which case the $(G,X)$-structure gives $M$ the structure $\tilde X/\Gamma$ where $\Gamma = \pi_1(M)$ and $\tilde X$ is the universal cover of $X$.\footnote{This completeness is equivalent to a notion of geodesic completeness; see \cite{goldman_nonstd}.}

In this setting, let us take $X=\SL 2$ and $G = \SL 2 \times \SL 2$, acting by left- and right-multiplication, which preserves the metric given by the Killing form.  To find nonstandard forms, we search for holonomy representations that take unbounded image in both factors of $G$.  It is a general fact about $(G,X)$-structures -- the Ehresmann--Thurston principle, see \cite{thurston_notes} and \cite{ehresmann} -- that given any holonomy representation $h_0 \in \textrm{Hom}(\Gamma, G)$ for $\Gamma$ the fundamental group of a fixed compact manifold, there is an open set $U$ containing $h_0$ in the variety  $\textrm{Hom}(\Gamma, G)$ consisting of holonomy representations.  Let us take $h_0$ a holonomy representation of the form $(1, \pi)$ for some $\pi:\Gamma \to G$.  The $(G,X)$-structure defined by $h_0$ is complete and identifies $M$ with a quotient of $X$.  What remains for us to show is that we can find a nearby representation $h$ which still provides a \emph{free and properly discontinuous} action of $\Gamma$ on $X$.  

Goldman proves that $h$ of the following form works.  Let $B$ be any hyperbolic or parabolic one-parameter subgroup of $\SL 2$ and let $v$ be a nonconstant representation of $\Gamma$ into $B$ which is sufficiently close to the constant representation (which exists because of the assumption $H^1(M;\mathbb{R})\neq 0$). Then $h(\gamma)=(v(\gamma), h_0(\gamma))$ gives a free and properly discontinuous action and hence a $(G,X)$-structure on a compact manifold $M$ which is nonstandard.

In \cite{kobayashi_def}, Kobayashi extends Goldman's work significantly.  We record a definition first:

\begin{defn}
A deformation $\phi_t(\Gamma)$ of $\Gamma < G$ is called trivial if each $\phi_t(\Gamma)$ is conjugate to $\Gamma$ in $G$.  If all sufficiently small deformations of $\Gamma$ are trivial, $\Gamma$ is called locally rigid in $G$.
\end{defn}

\begin{thm}[Kobayashi, \cite{kobayashi_def} Thm A]\label{kob_def_thm_1}
Let $H \backslash G = diag(G')\backslash (G'\times G')$ where $G'$ is a simple linear Lie group.  Then
\begin{itemize}
	\item[(1)] For any uniform lattice $\Gamma' < G'$, the quotient $H\backslash G/\Gamma$ remains a compact form for $\Gamma$ any sufficiently small deformation of $\Gamma'\times \{1\}$ in $G$.  That is to say, any sufficiently small deformation of $\Gamma' \times \{1\}$ still acts properly discontinuously and cocompactly on $H\backslash G$.
	\item[(2)] It is possible to find uniform lattices with nontrivial deformations of this type in $G$ if and only if $G'$ is locally isomorphic to $\emph{SO}(1,n)$ or $\emph{SU}(1,n)$.
\end{itemize}	
\end{thm}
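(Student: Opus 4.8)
The plan is to realize $X := \dq = \mathrm{diag}(G')\backslash(G'\times G')$ as $G'$ itself, under the diffeomorphism $(g_1,g_2)\mapsto g_1^{-1}g_2$; a subgroup of $G'\times G'$ of the form $\Gamma=\{(\rho_1(\gamma),\rho_2(\gamma)):\gamma\in\Gamma'\}$ then acts on $G'$ by the biaction $x\mapsto \rho_1(\gamma)^{-1}\,x\,\rho_2(\gamma)$. For the standard form one has $\rho_1=\mathrm{id}$ and $\rho_2\equiv 1$, so the action is $x\mapsto\gamma^{-1}x$ and the quotient is $\Gamma'\backslash G'$, compact since $\Gamma'$ is a uniform lattice. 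Throughout I would control properness through the Cartan projection $\mu\colon G'\to\overline{\mathfrak a'^+}$: writing $\mathfrak a(H)$ for the diagonal in $\overline{\mathfrak a'^+}\times\overline{\mathfrak a'^+}$ and invoking the criterion $\Gamma\transversal H\iff\mathfrak a(\Gamma)\transversal\mathfrak a(H)$ recorded above, the $\Gamma$-action is proper exactly when $\|\mu(\rho_1(\gamma))-\mu(\rho_2(\gamma))\|\to\infty$ as $\gamma\to\infty$ in $\Gamma'$.

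For (1), the heart of the matter is that this divergence is stable under small deformation of $(\mathrm{id},1)$. Fix a finite generating set $S$ of $\Gamma'$ and let $|\cdot|_S$ denote word length. Because $\rho_1=\mathrm{id}$ is the inclusion of a cocompact lattice, its Cartan projection is undistorted: $\|\mu(\rho_1(\gamma))\|\ge c\,|\gamma|_S-C$ for constants $c>0$, $C\ge 0$. On the other hand the subadditivity of $\mu$ gives $\|\mu(\rho_2(\gamma))\|\le |\gamma|_S\cdot\delta(\rho_2)$ with $\delta(\rho_2):=\max_{s\in S}\|\mu(\rho_2(s))\|\to 0$ as $\rho_2\to 1$. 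Hence for $(\rho_1,\rho_2)$ close enough to $(\mathrm{id},1)$ the triangle inequality yields $\|\mu(\rho_1(\gamma))-\mu(\rho_2(\gamma))\|\ge(c-\delta(\rho_2))|\gamma|_S-C\to\infty$, so the deformed action remains proper (and $\Gamma$ remains discrete and faithful, being a small deformation of a lattice). Cocompactness is then automatic: the deformed group is abstractly isomorphic to $\Gamma'$, so its virtual cohomological dimension is unchanged, and the equality case of the Serre--Kobayashi lemma ($vcd(\Gamma)+d(H)=d(G)\iff\dq$ compact) propagates compactness from $t=0$ to all small $t$. Alternatively one may quote the Ehresmann--Thurston principle to keep the completed $(G,X)$-structure on the fixed compact manifold. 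I expect this uniform-domination estimate---showing properness survives deformation, which it does not in general---to be the main obstacle.

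For the "only if" direction of (2), note that $\mathrm{Hom}(\Gamma',G'\times G')=\mathrm{Hom}(\Gamma',G')\times\mathrm{Hom}(\Gamma',G')$ and that conjugation in $G'\times G'$ acts factorwise, so a deformation of $(\mathrm{id},1)$ is trivial precisely when each factor is rigid. If $G'$ is simple but not locally isomorphic to $\mathrm{SO}(1,n)$ or $\mathrm{SU}(1,n)$, then $G'$ is either of higher real rank or is $\mathrm{Sp}(1,n)$ or $F_{4(-20)}$; in every such case $G'$ has Kazhdan's property $(T)$, whence $\mathrm{Hom}(\Gamma',\mathbb R)=0$ and the trivial representation in the second factor admits no nonconstant nearby deformation (its deformation space is $H^1(\Gamma';\mathbb R)\otimes\mathfrak g'=0$). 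The inclusion in the first factor is locally rigid by Weil's theorem, since $G'$ is not locally isomorphic to $\mathrm{SL}_2(\mathbb R)$. Thus both factors are rigid and every small deformation of $\Gamma'\times\{1\}$ is trivial.

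For the "if" direction, I would produce the required nonstandard deformations through the second factor. When $G'$ is $\mathrm{SO}(1,n)$ or $\mathrm{SU}(1,n)$ one can choose a uniform lattice $\Gamma'$ with $b_1(\Gamma')>0$ (such lattices exist by the standard constructions for these rank-one groups). Pick $0\ne\varphi\in\mathrm{Hom}(\Gamma',\mathbb R)$ and $0\ne X\in\mathfrak g'$, and set $u_t(\gamma)=\exp\!\big(t\,\varphi(\gamma)X\big)$, a nonconstant homomorphism $\Gamma'\to G'$ near the trivial one. Then $h_t(\gamma)=(\gamma,u_t(\gamma))$ is a small deformation of $\Gamma'\times\{1\}$, so by part (1) it acts properly discontinuously and cocompactly on $X$ for small $t$. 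It is nonstandard and nontrivial: any $G$-conjugate of $\Gamma'\times\{1\}$ still has constant second factor, whereas $u_t$ is nonconstant for $t\ne 0$, so $h_t$ is not conjugate to $h_0$. This exhibits a genuine deformation space of nonstandard compact forms and completes the dichotomy.
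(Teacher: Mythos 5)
Your overall route is the one the survey attributes to Kobayashi: reduce properness to the Cartan-projection criterion $\|\mu(\rho_1(\gamma))-\mu(\rho_2(\gamma))\|\to\infty$, get cocompactness from the equality case of the Serre--Kobayashi $vcd$ lemma, prove the ``only if'' of (2) from Weil local rigidity in the first factor together with vanishing of first Betti numbers (property $(T)$) in the second, and produce the nontrivial deformations in (2) from a nonconstant homomorphism of $\Gamma'$ into the noncompact centralizer $\{1\}\times G'$, which exists exactly when $H^1(\Gamma';\mathbb{R})\neq 0$. All of that is sound and matches the paper's account, including the identification of $diag(G')\backslash(G'\times G')$ with $G'$ under the biaction.

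The one genuine gap is in the key properness estimate for part (1). You justify the linear lower bound $\|\mu(\rho_1(\gamma))\|\ge c\,|\gamma|_S-C$ only for $\rho_1=\mathrm{id}$, but a general small deformation of $\Gamma'\times\{1\}$ moves \emph{both} coordinates, and your conclusion $(c-\delta(\rho_2))|\gamma|_S-C$ needs this lower bound for the deformed $\rho_1$, with a constant that does not degrade as $\rho_1$ varies. When $G'$ is not locally isomorphic to $\mathrm{PSL}_2(\mathbb{R})$ this is immediate from Weil rigidity ($\rho_1$ is conjugate to the inclusion, so the bound holds up to an additive constant); but in the remaining case --- Goldman's original setting, and the most interesting one --- you must show that the quasi-isometric embedding constants of a cocompact Fuchsian group vary (lower semi)continuously under deformation, equivalently that nearby Fuchsian representations are intertwined by an equivariant map whose Lipschitz distortion is close to $1$. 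This uniformity is precisely where the survey says the noncompactness of $Z_G(L)$ makes the proof ``much more difficult,'' and it is what the Gu\'eritaud--Kassel criterion $C(\rho_0,\rho)<1$ of Theorem \ref{gkthm} was built to control. Note that your construction in the ``if'' half of (2) deforms only the second factor, so it is unaffected; the gap concerns only the full strength of (1). A smaller point: your Ehresmann--Thurston alternative for cocompactness additionally requires completeness of the deformed $(G,X)$-structure (Klingler's theorem in the $\mathrm{AdS}^3$ case), so the $vcd$ argument is the one to keep.
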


\noindent This generalizes Goldman's work by taking $G'=\textrm{PSL}_2(\mathbb{R}) \cong \textrm{SO}(1,2)_o$ and Ghys's by taking $G' = \textrm{PSL}_2(\mathbb{C}) \cong \textrm{SO}(1,3)_o$.  To prove $(2) \implies (1)$, Kobayashi proves that for $G'$ locally isomorphic to $\textrm{SO}(1,n)$ or $\textrm{SU}(1,n)$, there are uniform lattices in $G'\times 1$ with nontrivial deformations, and any sufficiently small deformation preserves the properly discontinuous character of the action on $diag(G')\backslash (G'\times G')$.  That $(1) \implies (2)$ follows from Weil's local rigidity theorem \cite{weil} and some vanishing theorems for Betti numbers (see citations in \cite{kobayashi_def}, p. 406).

Kobayashi also observes the following:

\begin{prop}[Kobayashi, \cite{kobayashi_def} Thm B and \S 1.8]\label{kob_def_result}
The following homogeneous spaces admit compact forms that have nontrivial deformations $(n\geq 1)$:
\[\emph{SO}(1,2n)\backslash \emph{SO}(2,2n), \emph{Sp}(1,n)\backslash \emph{SU}(2,2n),\] 
\[\emph{G}_2(\mathbb{R})\backslash \emph{SO}(3,4), \emph{Spin}(3,4)\backslash \emph{SO}(4,4).\]

There are locally rigid standard forms for the following homogeneous spaces $(n\geq 1, m\geq 2)$:
\[\emph{U}(1,2n)\backslash \emph{SU}(2,2n), \emph{SO}(3,4m)\backslash \emph{SO}(4,4m), \emph{Sp}(1,n)\backslash \emph{SO}(4, 4n).\]
\end{prop}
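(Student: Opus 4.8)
The plan is to organize both halves of the statement around the standard-form structure and reduce everything to the deformation cohomology $H^1(\Gamma,\mathfrak{g})$, where $\mathfrak{g}$ is the adjoint $\Gamma$-module. First I would record, for each space, the subgroup $L$ producing the standard form $\dq\cong (H\cap L)\backslash L/\Gamma$ (reading off Table \ref{kob_table_1}, possibly switching the roles of $H$ and $L$): in Part 1 one finds $L=\textrm{U}(1,n)$, $L=\textrm{U}(1,2n)$, $L=\textrm{SO}(3,1)\times\textrm{SO}(2)$, and $L=\textrm{SO}(4,1)\times\textrm{SO}(3)$; in Part 2 one finds $L=\textrm{Sp}(1,n)$, $L=\textrm{Sp}(1,m)$, and $L=\textrm{SO}(3,4n)$. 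The point of this bookkeeping is that the two parts are separated by the type of $L$: every $L$ in Part 1 carries a noncompact factor locally isomorphic to $\textrm{SO}(1,k)$ or $\textrm{SU}(1,k)$, while every $L$ in Part 2 is rigid, being either of higher real rank ($\textrm{SO}(3,4n)$) or of quaternionic rank one ($\textrm{Sp}(1,n)$). Since $L$ is reductive, $\mathfrak{g}=\mathfrak{l}\oplus\mathfrak{m}$ splits $L$-invariantly (take the orthocomplement of $\mathfrak{l}$ under an invariant form), giving a $\Gamma$-module splitting $H^1(\Gamma,\mathfrak{g})=H^1(\Gamma,\mathfrak{l})\oplus H^1(\Gamma,\mathfrak{m})$ that will control the deformations.

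Next I would establish the stability half of the argument, namely that for these standard forms proper discontinuity and cocompactness are \emph{open} conditions on the deformation of $\Gamma$ in $G$; this is exactly the mechanism behind Theorem \ref{kob_def_thm_1}. Here I would invoke the Cartan-projection criterion of Definition \ref{kob_defn}: the standard form comes from $HL$ cocompact with $H\cap L$ compact, so $L\transversal H$, and because $L$ is of rank-one type the transversality $\Gamma\transversal H$ is stable under small perturbation in $G$. Cocompactness persists because it is detected by the covolume, which varies continuously under deformation. Thus any sufficiently small deformation of $\Gamma$ in $G$ still yields a compact form.

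For Part 1, granting stability it suffices to exhibit \emph{nontrivial} deformations of $\Gamma$ in $G$. Because $L$ has a factor $L_0$ locally isomorphic to $\textrm{SO}(1,k)$ or $\textrm{SU}(1,k)$, Weil's local rigidity fails for uniform lattices in $L_0$ — this is precisely statement (2) of Theorem \ref{kob_def_thm_1} — so $H^1(\Gamma,\mathfrak{l}_0)\neq 0$, and via the $\Gamma$-module inclusions $\mathfrak{l}_0\subset\mathfrak{l}\subset\mathfrak{g}$ and the splitting above we get $H^1(\Gamma,\mathfrak{g})\neq 0$. Hence the inclusion $\Gamma\hookrightarrow G$ is not locally rigid, and by the stability paragraph these deformations remain compact forms, producing a positive-dimensional deformation space. (Concretely one can bend $\Gamma$ along $L_0$, or cite the Matsushima--Murakami nonvanishing for $\textrm{SO}(1,k)$ and $\textrm{SU}(1,k)$.)

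For Part 2 I would prove local rigidity by showing $H^1(\Gamma,\mathfrak{g})=0$, treating the two summands separately. The summand $H^1(\Gamma,\mathfrak{l})$ vanishes: for $L=\textrm{SO}(3,4n)$ this is Weil's theorem for higher-rank lattices, and for $L=\textrm{Sp}(1,n)$ it is the rigidity of quaternionic hyperbolic lattices. The delicate and, I expect, hardest step is the vanishing $H^1(\Gamma,\mathfrak{m})=0$ for $\mathfrak{m}=\mathfrak{g}/\mathfrak{l}$: one must decompose $\mathfrak{m}$ into $L$-irreducibles and apply Matsushima--Murakami/Raghunathan-type vanishing theorems to each, verifying that none of the exceptional coefficient representations (those for which $H^1$ can fail to vanish) occurs in $\mathfrak{m}$. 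This representation-theoretic check is where the quaternionic geometry of $\textrm{Sp}(1,n)$ constrains the admissible modules and where the ``vanishing theorems for Betti numbers'' cited for Theorem \ref{kob_def_thm_1} enter; it is the main obstacle in the whole proof. Once $H^1(\Gamma,\mathfrak{g})=0$ is in hand, $\Gamma$ is locally rigid in $G$ and the standard form is locally rigid, completing Part 2.
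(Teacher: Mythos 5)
Your treatment of Part 2 (local rigidity via $H^1(\Gamma,\mathfrak{g})=H^1(\Gamma,\mathfrak{l})\oplus H^1(\Gamma,\mathfrak{m})=0$, with the vanishing of $H^1(\Gamma,\mathfrak{m})$ as the hard representation-theoretic step) is essentially the argument the paper attributes to Kobayashi, namely that the relevant uniform lattices in $L$ are locally rigid \emph{in $G$}. But Part 1 contains a genuine error. You derive the nontrivial deformations from the claim that ``Weil's local rigidity fails for uniform lattices in $L_0$'' for $L_0$ locally isomorphic to $\textrm{SO}(1,k)$ or $\textrm{SU}(1,k)$, so that $H^1(\Gamma,\mathfrak{l}_0)\neq 0$. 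This is false: Weil's theorem gives local rigidity of uniform lattices in every simple Lie group not locally isomorphic to $\textrm{SL}_2(\mathbb{R})$, so $H^1(\Gamma,\mathfrak{l}_0)=0$ for $\textrm{SO}(1,3)$, $\textrm{SO}(1,4)$ and $\textrm{SU}(1,k)$ with $k\geq 2$ --- i.e.\ for all the factors you list except the degenerate case $\textrm{SU}(1,1)\cong\textrm{SL}_2(\mathbb{R})$. You have misread statement (2) of Theorem \ref{kob_def_thm_1}: the non-rigidity asserted there is of the embedding $\Gamma'\times\{1\}\hookrightarrow G'\times G'$, obtained by deforming the \emph{trivial} homomorphism into the second factor; it is controlled by $H^1(\Gamma',\mathbb{R})$ (first Betti number), not by a failure of Weil rigidity inside $G'$. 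So your source of nontrivial cocycles vanishes, and the argument for Part 1 collapses.

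The mechanism the paper describes is different and lives in a direction transverse to $\mathfrak{l}$: for each space in Part 1 the centralizer $Z_G(L)$ is nontrivial \emph{and compact} (e.g.\ the $\textrm{U}(1)$-center of $L=\textrm{U}(1,n)$ inside $\textrm{SO}(2,2n)$, or the compact factor $\textrm{SO}(2)$, $\textrm{SO}(3)$ of $L$ in the $\textrm{G}_2$ and $\textrm{Spin}(3,4)$ cases). One chooses a uniform lattice $\Gamma<L$ admitting a nonconstant homomorphism $\rho:\Gamma\to Z_G(L)$ --- this is where the $\textrm{SO}(1,k)$/$\textrm{SU}(1,k)$ structure enters, via the existence of lattices with $b_1\neq 0$ --- and deforms by $\gamma\mapsto\gamma\rho(\gamma)$. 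Because $Z_G(L)$ is compact, the deformed group stays at bounded distance from $\Gamma$, so proper discontinuity and cocompactness are automatic; your separate ``stability'' paragraph (openness of $\Gamma\transversal H$ for rank-one $L$) is the much harder content of Kassel's later Theorem \ref{kassel_def} and is not needed here. One must still check that $\Gamma_\rho$ is not conjugate to $\Gamma$ in $G$, which is where the nontriviality of the deformation is actually established. In short: the deformations come from $\textrm{Hom}(\Gamma,Z_G(L))$, not from $H^1(\Gamma,\mathfrak{l}_0)$.
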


\noindent Note that all these examples are taken from Table \ref{kob_table_1}, which lists homogeneous  spaces with compact forms constructed from a triple of Lie groups $G,H,L$ ($H$ and $L$ play symmetric roles in this construction).  The key to Theorem \ref{kob_def_thm_1} and to the first set of examples in Proposition \ref{kob_def_result} is the existence of a nontrivial centralizer of $L$ in $G$.  When $Z_G(L)$ is nontrivial, an embedding of some lattice $\Gamma$ in $L$ into $G$ can be deformed by finding a nonconstant homomorphism $\rho:\Gamma \to Z_G(L)$ and mapping $\gamma \mapsto \gamma \rho(\gamma)$.  Denote these deformed embeddings of $\Gamma$ by $\Gamma_\rho$; Kobayashi shows that for $\rho$ in a sufficiently small neighborhood of the constant homomorphism, $\Gamma_\rho$ still acts properly discontinously, thus yielding a compact form.  In Proposition \ref{kob_def_result} these centralizers are compact and hence the deformations do not affect proper discontinuity.  In Theorem \ref{kob_def_thm_1} on the other hand, the centralizer is noncompact, making the proof that proper discontinuity of the action survives much more difficult.  For the second set of examples in Prop. \ref{kob_def_result} Kobayashi observes that any uniform lattice in the $L$ corresponding to the given pair is locally rigid in $G$.  One feature of this circle of results is that the deformation of $\Gamma$ does not need to be by the specific one-parameter subgroups Goldman uses.

In addition to the papers of Goldman and Kobayashi, we note that similar deformations have also been given by Ghys \cite{ghys1, ghys2} and Salein \cite{salein_killing}.  We will have more to say about Salein's extension of this work below.

%
%%
%%%
%%%%
%%%%%
%%%%%%
%%%%%%%%%%%%%%%%%%%%%

\subsubsection{The work of Salein, Kassel and Gu\'eritaud}\label{sec_salein_kassel}

Recently Salein, Kassel and Gu\'eritaud have continued the work done on moduli spaces of compact forms by providing new sufficient conditions for understanding when deformed embeddings of $\Gamma$ in $G$ still give rise to compact forms.  In this work, the new compact forms presented are no longer small, continuous deformations of standard forms.  Rather, they are far from the standard examples -- even topologically different.  Salein's demonstration of this fact was surprising and the work of these authors has greatly increased our understanding of moduli spaces of compact forms.

\vspace{.5cm}

Salein's work on the problem can be found in \cite{salein_exotic}.  He again studies $\textrm{AdS}^3$ which is a model space for all Lorentzian 3-manifolds of constant curvature -1 and is identified with $\textrm{P}\SL 2$ with the Lorentzian metric given by its Killing form.  Recall that $G:=\textrm{P}\SL 2 \times \textrm{P}\SL2$ acts on this space isometrically.  We will write $(\rho, \rho_0)(\Gamma)$ for the embedding of a discrete group in $G$, and view a compact form of this homogeneous  space as $\textrm{P}\SL 2/(\rho, \rho_0)\Gamma$ where $\Gamma$ acts on $\textrm{P}\SL 2$ by $(\rho(\gamma), \rho_0(\gamma))x = \rho(\gamma)x\rho_0(\gamma)^{-1}.$

Now let $\rho_0(\Gamma)$ be a Fuchsian group in $\textrm{P}\SL 2$; let $g$ be the genus of the usual quotient $\textrm{P}\SL 2/\rho_0(\Gamma)$.  We will call a representation $\rho$ from $\textrm{Hom}(\Gamma, \textrm{P}\SL 2)$ $\rho_0$-\emph{admissible} if $(\rho, \rho_0)(\Gamma)$ acts properly discontinuously on $\textrm{P}\SL 2$.\footnote{Kulkarni and Raymond prove for any pair of representations that if $(\rho, \rho_0)(\Gamma)$ acts properly on $\SL 2$ then either $\rho$ or $\rho_0$ is injective with discrete and cocompact image \cite{kul_ray}.}  Salein proves the following:

\begin{thm}[Salein, \cite{salein_exotic} Thm 2.1.1]
The set $Adm(\rho_0)$ of $\rho_0$-admissible homomorphisms is an open subset of $\emph{Hom}(\Gamma, \emph{PSL}_2(\mathbb{R}))$.  For certain choices of $\rho_0$, $Adm(\rho_0)$ is disconnected, and has components in every connected component of $\emph{Hom}(\Gamma, \emph{PSL}_2(\mathbb{R}))$, except the two extremal ones (which are copies of Teichm\"uller space).
\end{thm}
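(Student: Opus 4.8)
The plan is to reduce everything to a single \emph{properness criterion} for the diagonal action and then read off openness and disconnectedness from the classical topology of $\mathrm{Hom}(\Gamma, \textrm{P}\SL 2)$. Recall that $\Gamma$ acts by $(\rho(\gamma),\rho_0(\gamma))\cdot x = \rho(\gamma)\, x\, \rho_0(\gamma)^{-1}$, so $\rho_0$-admissibility is exactly properness of $\mathrm{diag}(\textrm{P}\SL 2) \transversal (\rho,\rho_0)(\Gamma)$ inside $\textrm{P}\SL 2 \times \textrm{P}\SL 2$. Since here $\mathbb{R}$-$\mathrm{rank}=1$ against $\mathbb{R}$-$\mathrm{rank}=2$, this is precisely the rank-difference-one situation analyzed by Kassel \cite{kassel_proper}: the Cartan projection of $(\rho,\rho_0)(\gamma)$ is the pair of translation lengths $(\lambda(\rho(\gamma)),\lambda(\rho_0(\gamma)))$ and that of the diagonal is $\{(t,t)\}$, so transversality asks these to stay away from the diagonal at infinity. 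As $\rho_0$ is cocompact Fuchsian, $\lambda(\rho_0(\gamma))\to\infty$ properly, and the criterion becomes uniform domination, $C(\rho_0,\rho):=\sup_{\gamma\neq 1}\lambda(\rho(\gamma))/\lambda(\rho_0(\gamma))<1$, equivalently the existence of a $(\rho_0,\rho)$-equivariant map $f:\mathbb{H}^2\to\mathbb{H}^2$ with Lipschitz constant $<1$, the base carrying the cocompact $\rho_0$-action.

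For openness, given admissible $\rho$ I would fix a strictly contracting equivariant $f$ with constant $k<1$ and, for $\rho'$ near $\rho$, correct $f$ to a $(\rho_0,\rho')$-equivariant map by interpolating over a compact fundamental domain for the $\rho_0$-action. Compactness of $\Sigma=\mathbb{H}^2/\rho_0(\Gamma)$ makes the Lipschitz constant change by $O(\|\rho'-\rho\|)$, so it remains $<1$; hence $\mathrm{Adm}(\rho_0)$ is open. (Equivalently, one invokes directly that properness is an open condition in this framework \cite{kassel_proper}.)

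For the statement about components I would use Goldman's theorem that the connected components of $\mathrm{Hom}(\Gamma,\textrm{P}\SL 2)$ are the level sets of the Euler number $e$, with $|e|\le 2g-2$ by Milnor--Wood and the two extremal values $e=\pm(2g-2)$ being the copies of Teichm\"uller space of discrete faithful representations. Two things must be checked. \emph{Exclusion}: if $\rho$ is extremal it is itself cocompact Fuchsian, and for two distinct marked hyperbolic structures Thurston's minimal-stretch inequality forces $C(\rho_0,\rho)\ge 1$, so no extremal $\rho$ is admissible (consistent with the Kulkarni--Raymond dichotomy \cite{kul_ray}). \emph{Inclusion}: for each $e$ with $|e|<2g-2$ I would construct a $\rho$ with $e(\rho)=e$ whose translation lengths have small linear rate, $\lambda(\rho(\gamma))\le \varepsilon|\gamma|+C$, using that a sub-maximal Euler number, unlike the maximal one, can be realized by representations that are mostly rotational and hence need not translate far. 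Choosing a single cocompact Fuchsian $\rho_0$ with injectivity radius large enough that its length rate exceeds $\varepsilon$ for all of the finitely many model $\rho$'s then yields $C(\rho_0,\rho)<1$, so each non-extremal component meets $\mathrm{Adm}(\rho_0)$. Since $\mathrm{Adm}(\rho_0)$ then meets at least two components (the $e=0$ component, which already contains the trivial $\rho$ giving the standard form, and some $e\neq 0$ component) while missing the extremal ones, it is disconnected.

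The crux is the inclusion step: producing, for every intermediate $e$, a representation that simultaneously realizes that Euler number \emph{and} is uniformly dominated by a fixed $\rho_0$. This is exactly where the topological Milnor--Wood bookkeeping has to be reconciled with metric control, and the dichotomy ``$|e|=2g-2\Rightarrow$ forced Fuchsian, linear growth, never dominated'' versus ``$|e|<2g-2\Rightarrow$ realizable with arbitrarily small growth, dominated by a large $\rho_0$'' is what both pushes the extremal components out and lets all the others in. I expect the careful construction of the small-growth representatives in each component---via a pants or handle decomposition and additivity of the relative Euler number---to be the main technical work.
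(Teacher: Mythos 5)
You have correctly identified the engine of the proof: Salein's admissibility criterion (Lemma \ref{salein_criterion}), namely that a uniformly strictly contracting $(\rho_0,\rho)$-equivariant self-map of $\mathbb{H}^2$ forces properness, together with Goldman's description of the components of $\textrm{Hom}(\Gamma,\textrm{P}\SL 2)$ by Euler number and the Gauss--Bonnet/minimal-stretch obstruction that excludes the two Teichm\"uller components. But there are two genuine gaps. First, your openness argument is circular at Salein's level: to perturb a contracting equivariant $f$ into one for a nearby $\rho'$, you must first know that \emph{every} admissible $\rho$ admits such an $f$, i.e.\ the converse of Salein's criterion. That converse (admissible $\Rightarrow C(\rho_0,\rho)<1$) is the later and much harder theorem of Gu\'eritaud--Kassel (Theorem \ref{gkthm}); Salein only has the ``if'' direction. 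The route actually used is different and non-circular: the Ehresmann--Thurston principle shows nearby holonomies are holonomies of $(G,X)$-structures on the same compact manifold, and Klingler's completeness theorem for compact constant-curvature Lorentz manifolds upgrades these structures to genuine quotients, i.e.\ to properly discontinuous actions. Your fallback that ``properness is an open condition in this framework'' is exactly this nontrivial fact and is not free.

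Second, the crux --- producing an admissible $\rho$ in each non-extremal component --- is precisely the step you defer. Your proposed route (realize every $|e|<2g-2$ by representations whose translation lengths grow with arbitrarily small linear rate, then choose $\rho_0$ thick enough to dominate them) rests on a claim that is asserted rather than proved, and it is not obviously easier than the theorem itself: a representation with $e\neq 0$ cannot lie near the trivial representation (the Euler number is locally constant) and cannot have purely elliptic image, so ``mostly rotational'' requires real work. Salein's construction is explicit and does both jobs at once: he contracts a regular $4g$-gon fundamental domain for $\rho_0$ toward the center of the Poincar\'e disk so that the image is a geodesic $4g$-gon with angle sum $2\pi m$, $1<m<2g$; extending this contraction equivariantly verifies Lemma \ref{salein_criterion} directly, and the new angle sum computes the Euler number of the associated representation, hence selects the component. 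You should either carry out your small-growth construction in detail or replace it with this polygon contraction.
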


\noindent The openness part of this theorem is a consequence of the completeness of constant curvature compact Lorentz manifolds, which is due to Klingler \cite{klingler}, and of the Ehresmann--Thurston principle.  The second statement is Salein's contribution, and was quite surprising at the time in that it shows that the moduli space of compact forms is not connected.  In particular, there are nonstandard forms which are not deformations of standard ones.

The key step in Salein's approach is the following Lemma, which provides a criterion for admissibility of the pair $(\rho_0, \rho)$:

\begin{lemma}[Salein, \cite{salein_exotic}, Lemma 2.1.3]\label{salein_criterion}
If there exists a function $f:\mathbb{H}^2 \to \mathbb{H}^2$ which is uniformly strictly contracting and $(\rho_0, \rho)(\Gamma)$ equivariant in that
\[f(\rho_0(\gamma)z) = \rho(\gamma)f(z) \mbox{ for all } \gamma\in \Gamma, z\in \mathbb{H}^2,\]
then $\rho$ is $\rho_0$-admissible.
\end{lemma}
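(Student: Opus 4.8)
The plan is to translate $\rho_0$-admissibility into a statement about Cartan projections and then read the required divergence directly off the contraction factor of $f$. Writing $G = \mathrm{PSL}_2(\mathbb{R}) \times \mathrm{PSL}_2(\mathbb{R})$ and $H = \mathrm{diag}(\mathrm{PSL}_2(\mathbb{R}))$, recall from Section~\ref{sec_cartan_proj} that $(\rho,\rho_0)(\Gamma)$ acts properly on $H\backslash G \cong \mathrm{PSL}_2(\mathbb{R})$ precisely when $(\rho,\rho_0)(\Gamma) \transversal H$, and that this transversality can be checked on the Cartan projection $\mu = (\mu_0, \mu_0) : G \to \mathfrak{a}_+ = \mathbb{R}_{\geq 0}^2$, where $\mu_0(g)$ is, up to normalization, $\tfrac12 d_{\mathbb{H}^2}(o, g\cdot o)$ with $o$ the basepoint fixed by the maximal compact. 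Since $\mu(H)$ is the diagonal ray, properness amounts to showing that $\{\gamma : |\mu_0(\rho(\gamma)) - \mu_0(\rho_0(\gamma))| \leq R\}$ is finite for every $R$; equivalently, that the gap $\mu_0(\rho_0(\gamma)) - \mu_0(\rho(\gamma))$ tends to $+\infty$ as $\gamma \to \infty$ in $\Gamma$. The whole task thus reduces to producing a definite deficit of $\mu_0 \circ \rho$ relative to $\mu_0 \circ \rho_0$.

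First I would extract this deficit from the contracting map. Let $k < 1$ be a uniform contraction constant, so $d(f(z), f(w)) \leq k\, d(z,w)$ for all $z, w \in \mathbb{H}^2$, and fix the basepoint $o$. Applying the equivariance $f(\rho_0(\gamma) z) = \rho(\gamma) f(z)$ at $z = o$ gives $d(f(o), \rho(\gamma) f(o)) = d(f(o), f(\rho_0(\gamma) o)) \leq k\, d(o, \rho_0(\gamma) o) = 2k\,\mu_0(\rho_0(\gamma))$. The left-hand side is the displacement of the fixed point $f(o)$ under $\rho(\gamma)$, which differs from $d(o,\rho(\gamma)o) = 2\mu_0(\rho(\gamma))$ by at most $2 d(o, f(o))$, a constant independent of $\gamma$. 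Combining these yields $\mu_0(\rho(\gamma)) \leq k\, \mu_0(\rho_0(\gamma)) + C$ with $C = d(o,f(o))$. Note that only the metric contraction and equivariance of $f$ are used; no continuity, surjectivity, or smoothness is required.

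Finally I would close the argument using that $\rho_0$ is Fuchsian. Because $\rho_0(\Gamma)$ is a cocompact Fuchsian group, the orbit map $\gamma \mapsto \rho_0(\gamma)\cdot o$ is proper, so $\mu_0(\rho_0(\gamma)) \to \infty$ and each sublevel set $\{\gamma : \mu_0(\rho_0(\gamma)) \leq R'\}$ is finite. The bound from the previous step then gives $\mu_0(\rho_0(\gamma)) - \mu_0(\rho(\gamma)) \geq (1-k)\,\mu_0(\rho_0(\gamma)) - C \to +\infty$, which is exactly the divergence of the gap characterizing $(\rho,\rho_0)(\Gamma) \transversal H$, hence proper discontinuity and $\rho_0$-admissibility of $\rho$. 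I expect the main technical obstacle to lie in the middle step: passing rigorously from the displacement of the single orbit point $f(o)$ to the Cartan projection $\mu_0$, controlling the additive errors uniformly in $\gamma$, and confirming that it is the \emph{strict} contraction $k < 1$ (rather than merely $k \leq 1$) that upgrades a bounded comparison into the linear, divergent deficit forcing properness.
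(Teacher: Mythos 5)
Your proof is correct, and there is no hidden difficulty in the step you flag as the main obstacle: for $\textrm{P}\SL 2$ the Cartan projection \emph{is} (up to normalization) the displacement $d(o,g\cdot o)$ of the basepoint fixed by the maximal compact, so once you have $d(o,\rho(\gamma)o)\le k\,d(o,\rho_0(\gamma)o)+2d(o,f(o))$ from equivariance and the triangle inequality, the linearly growing gap $(1-k)\mu_0(\rho_0(\gamma))-C$ follows immediately, and it is indeed the uniform constant $k<1$ (not mere non-expansion) that does the work. The reduction of admissibility to divergence of $\mu_0(\rho_0(\gamma))-\mu_0(\rho(\gamma))$ is exactly the Benoist--Kobayashi criterion of Section \ref{sec_cartan_proj} applied to $H=diag(\textrm{P}\SL 2)$ in $G=\textrm{P}\SL 2\times\textrm{P}\SL 2$, where $\mathfrak{a}(H)$ is the diagonal ray; the one point worth stating explicitly is that injectivity and discreteness of $\rho_0$ are what turn ``bounded Cartan projection'' into ``finitely many $\gamma$'' (and guarantee $(\rho,\rho_0)(\Gamma)$ is discrete in $G$), which your appeal to finiteness of the sublevel sets of $\mu_0\circ\rho_0$ supplies. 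Where you differ from Salein's original route---which this survey only gestures at, noting that it rests on studying isometries via translation length---is in measuring group elements by the orbit-point displacement rather than by $\ell(g)=\inf_z d(z,g\cdot z)$: the contraction gives $\ell(\rho(\gamma))\le k\,\ell(\rho_0(\gamma))$ with no additive error, but one must then invoke a properness criterion phrased in terms of translation lengths, whereas your version plugs directly into the Cartan-projection criterion quoted above. Your formulation is precisely the one Gu\'eritaud and Kassel later adopt (the condition $C(\rho_0,\rho)<1$ of Theorem \ref{gkthm}), and it has the advantage of generalizing verbatim to $\mathbb{H}^n$.
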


\noindent Techniques in Fuchsian groups and hyperbolic geometry, such as studying isometries via translation length, underpin Salein's approach to the problem.

Salein's criterion allows him to construct his examples in the following simple way.  Position a regular $4g$-gon which is a fundamental domain for $\rho_0(\Gamma)$ about the center of the Poincar\'e disk model for $\mathbb{H}^2$.  Let $f$ be a contraction of the fundamental domain towards this center, chosen so that the $4g$-gon maps to a $4g$-gon with geodesic sides, but now with angle sum $2\pi m$ with $1<m<2g$.  One can extend $f$ to be $\Gamma$-equivariant so that it satisfies Lemma \ref{salein_criterion}; the new angle sum implies the Euler number of an associated surface, which specifies which component of $\textrm{Hom}(\Gamma, \textrm{P}\SL 2)$ the representation belongs to.

Gu\'eritaud and Kassel have also studied criteria for admissible pairs in \cite{g-k1}, building on Kassel's earlier work in \cite{fanny_thesis}.  They study $(\rho_0, \rho)$-equivariant maps $f$ such as those Salein's criterion calls for, but in much greater generality.  They generalize to the isometry group of $n$-dimensional hyperbolic space and define for each pair $(\rho_0, \rho)$ of representations into $\textrm{Isom}(\mathbb{H}^n)=\textrm{PSO}(1,n)$ the following constant:

\begin{defn}
\[C(\rho_0, \rho) := \inf\{\textrm{Lip}(f):  f \mbox{ Lipschitz and } (\rho_0, \rho)\mbox{-equivariant}\}\]
where $\textrm{Lip}(f)$ is the Lipschitz constant of $f$.
\end{defn}

\noindent They then prove:

\begin{thm}[Gu\'eritaud--Kassel; see Chapter 5 of \cite{fanny_thesis} for the case $n=2$ and \cite{g-k1} Theorem 1.8 for the full result]\label{gkthm}
The pair $(\rho_0, \rho)$, with $\rho$ geometrically finite, is admissible if and only if, up to switching $\rho_0$ and $\rho$, the representation $\rho_0$ is injective and discrete and $C(\rho_0, \rho) < 1$.
\end{thm}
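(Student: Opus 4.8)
The plan is to reinterpret admissibility through the properness (transversality) criterion discussed above and then to bridge the resulting Cartan-projection inequality to the Lipschitz invariant $C(\rho_0,\rho)$ by a Thurston-style analysis of equivariant maps. Write $G=\mathrm{PSO}(1,n)$, fix a basepoint $o\in\mathbb{H}^n$, and let $\mu(g)=d(o,g\cdot o)$ be the rank-one Cartan projection. Since $\Gamma$ is embedded in $G\times G$ by $\gamma\mapsto(\rho_0(\gamma),\rho(\gamma))$ and acts on $G$ with $H=\mathrm{diag}(G)$, the pair is admissible exactly when $\Gamma\transversal H$; because $\mathbb{R}$-$\mathrm{rank}(H)=\mathbb{R}$-$\mathrm{rank}(G\times G)-1$, this is the setting of Kassel's criterion, and by the Benoist--Kobayashi transversality condition it amounts to divergence of the Cartan projections from the diagonal: as $\gamma\to\infty$ in $\Gamma$ one needs $|\mu(\rho_0(\gamma))-\mu(\rho(\gamma))|\to\infty$.

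For the ``if'' direction I would argue directly. Suppose $\rho_0$ is injective and discrete and $C(\rho_0,\rho)<1$, so there is a $(\rho_0,\rho)$-equivariant $f$ with $\mathrm{Lip}(f)=C'<1$. Equivariance gives $d(f(o),\rho(\gamma)f(o))=d(f(o),f(\rho_0(\gamma)o))\le C'\mu(\rho_0(\gamma))$, and the triangle inequality then yields
\[\mu(\rho(\gamma))\le C'\,\mu(\rho_0(\gamma))+2\,d(o,f(o)).\]
As $\rho_0$ is injective and discrete, $\mu(\rho_0(\gamma))\to\infty$, so $\mu(\rho_0(\gamma))-\mu(\rho(\gamma))\ge(1-C')\mu(\rho_0(\gamma))-2d(o,f(o))\to\infty$, which is precisely the divergence required for properness. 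This both recovers and sharpens Salein's Lemma \ref{salein_criterion}.

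The ``only if'' direction carries the substance, and I would split it into three steps. First, from properness one extracts that, up to switching the two factors, $\rho_0$ is injective and discrete: if neither factor were discrete one could produce $\gamma_k\to\infty$ whose two Cartan projections stay comparable, contradicting divergence from the diagonal; this is the geometrically finite generalization of the Kulkarni--Raymond dichotomy. Second, and crucially, I would promote the qualitative divergence $\mu(\rho_0(\gamma))-\mu(\rho(\gamma))\to\infty$ to the \emph{uniform domination} estimate $\mu(\rho(\gamma))\le C'\mu(\rho_0(\gamma))+D$ for some $C'<1$. This implication is false for arbitrary sequences of reals (a difference can diverge while the ratio tends to $1$), so it genuinely relies on the rank-one geometry of $\mathbb{H}^n$ and the structure of the discrete group $\rho_0(\Gamma)$. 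Third, from uniform domination I would build an optimal equivariant Lipschitz map: following Thurston's theory of the stretch (asymmetric) metric and its equivariant generalization, one shows that the minimal Lipschitz constant is realized and equals the maximal stretch $\sup_\gamma \lambda(\rho(\gamma))/\lambda(\rho_0(\gamma))$ over translation lengths $\lambda$, which uniform domination forces to be $<1$; hence $C(\rho_0,\rho)<1$.

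The main obstacle is the construction of this optimal Lipschitz map in the third step. In Thurston's original setting both structures are cocompact and of the same topological type, whereas here $\rho$ is only geometrically finite, so the map must be controlled on the convex core and across cusped or infinite-area ends, and the maximally stretched locus (typically a geodesic lamination) must be located to certify sharpness of the Lipschitz constant. Making the equality $C(\rho_0,\rho)=\sup_\gamma \lambda(\rho(\gamma))/\lambda(\rho_0(\gamma))$ rigorous under geometric finiteness, together with ensuring that the passage from divergence to uniform domination in the second step survives in this generality, is the technical heart of the argument.
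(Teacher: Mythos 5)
Your ``if'' direction is correct and is essentially the argument the literature uses: an equivariant map $f$ with $\mathrm{Lip}(f)=C'<1$ gives $\mu(\rho(\gamma))\le C'\mu(\rho_0(\gamma))+2d(o,f(o))$, and since $\rho_0$ is injective and discrete this forces the Cartan projections to drift away from the diagonal, which is the Benoist--Kobayashi properness criterion; this is exactly Lemma \ref{salein_criterion} adapted from $\mathbb{H}^2$ to $\mathbb{H}^n$. Your step 1 of the converse (the Kulkarni--Raymond-type dichotomy giving that one factor is injective and discrete) is also a genuine ingredient.

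The converse, however, has a gap at your step 2 and an error at your step 3. The promotion of the qualitative divergence $|\mu(\rho_0(\gamma))-\mu(\rho(\gamma))|\to\infty$ to the uniform estimate $\mu(\rho(\gamma))\le C'\mu(\rho_0(\gamma))+D$ with $C'<1$ is precisely the \emph{sharpness} property discussed at the end of this survey; in Gu\'eritaud--Kassel's work it is obtained as a \emph{consequence} of $C(\rho_0,\rho)<1$, not as an input, and you offer no mechanism for proving it directly from properness -- so the plan is circular at its load-bearing joint. Worse, the identity $C(\rho_0,\rho)=\sup_\gamma \lambda(\rho(\gamma))/\lambda(\rho_0(\gamma))$ that step 3 rests on is false in general: take $\Gamma=\mathbb{Z}$ with $\rho_0(1)$ and $\rho(1)$ both parabolic; every translation length vanishes, so the supremum of ratios is degenerate, yet no equivariant map can have Lipschitz constant below $1$ (compare $d(o,\rho_0(n)o)$ and $d(f(o),\rho(n)f(o))$, both asymptotic to $2\log|n|$), so $C=1$ and the action is indeed non-proper. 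Cusps are exactly the obstruction, and the equality $C=C'$ is a hard theorem valid only in the regime $C\ge 1$ (together with a careful treatment of cuspidal behavior). The actual proof therefore runs contrapositively: one assumes $C(\rho_0,\rho)\ge 1$, shows an optimal equivariant map exists whose stretch locus is a maximally stretched geodesic lamination, and approximates its leaves by axes of elements $\gamma_k$ to produce, after switching factors if $C>1$, a sequence whose Cartan projections stay at bounded distance from the diagonal, contradicting properness. Your instinct that the optimal Lipschitz map and its stretch locus are the technical heart is right, but the machinery is deployed to refute properness when $C\ge 1$, not to extract $C<1$ from properness via translation lengths.
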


This result comes out of a larger project of understanding the import of the constant $C(\rho_0, \rho)$.  In particular, they study $f$ achieving $C(\rho_0, \rho)$ as their Lipschitz constant, and the geometry of the \emph{stretch locus} for such $f$, i.e. those points in $\mathbb{H}^n$ which have no neighborhood over which the Lipschitz constant for $f$ is smaller than the global constant $\textrm{Lip}(f)$. The `if' part of Theorem \ref{gkthm} is a consequence of Lemma \ref{salein_criterion} with proof adapted to the case of $\mathbb{H}^n$; the `only if' follows from careful understanding of the structure of the stretch locus when $C(\rho_0, \rho)\geq 1$.

\vspace{.5cm}

In \cite{kassel_deformation}, Kassel improves on Kobayashi's Proposition \ref{kob_def_result}:

\begin{thm}[Kassel, \cite{kassel_deformation} Thm 1.1]\label{kassel_def}
Let $G$ be a real, reductive, linear Lie group; let $H$ and $L$ be closed, reductive subgroups with $\mathbb{R}$-$\emph{rank}(L)=1$ with $L$ acting properly discontinuously and cocompactly on $H\backslash G$.  For any uniform lattice $\Gamma$ of $L$, there exists a neighborhood $\mathcal{U}$ of the natural inclusion in $\emph{Hom}(\Gamma, G)$ such that any $\phi\in \mathcal{U}$ satisfies:
\begin{itemize}
	\item $\phi(\Gamma)$ is discrete in $G$,
	\item $\phi(\Gamma)$ acts properly discontinuously and cocompactly on $H\backslash G$.
\end{itemize}
\end{thm}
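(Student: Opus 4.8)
The plan is to translate both desired conclusions into the Cartan-projection language of Benoist and Kobayashi (Definition~\ref{kob_defn}) and to show that each is an \emph{open} condition on $\phi$ at the natural inclusion $\iota\colon\Gamma\hookrightarrow L\hookrightarrow G$. Write $\mu(g)$ for the representative of the Cartan projection $\pi(g)$ in a fixed closed Weyl chamber $\mathfrak{a}^+$. Recall that $\phi(\Gamma)$ acts properly on $H\backslash G$ exactly when $\mathfrak{a}(\phi(\Gamma))\transversal\mathfrak{a}(H)$, i.e. when $\mu(\phi(\Gamma))$ escapes every tube around $\mathfrak{a}(H)$. The strategy is: (i) show cocompactness is automatic once properness and faithful discreteness hold; (ii) establish a \emph{quantitative} (``strong'') properness estimate for the inclusion, using $\mathbb{R}\text{-rank}(L)=1$; and (iii) prove this estimate persists under small deformations.

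I would dispose of step~(i) first, since it is essentially formal. Because $L$ acts properly and cocompactly on $H\backslash G$, we have $L\transversal H$ and $H\backslash G/L$ compact, so $d(H)+d(L)=d(G)$ by the Proposition following Theorem~\ref{d_thm}. A uniform lattice $\Gamma<L$ has $vcd(\Gamma)=d(L)$, and any $\phi$ close to $\iota$ that is faithful satisfies $vcd(\phi(\Gamma))=vcd(\Gamma)=d(L)$, whence $vcd(\phi(\Gamma))+d(H)=d(G)$. Thus, once $\phi(\Gamma)$ is discrete, faithful, and acts properly, the $vcd$ criterion (the Serre--Kobayashi lemma stated after Theorem~\ref{sim_thm}) forces $H\backslash G/\phi(\Gamma)$ to be compact. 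The whole problem therefore reduces to the stability of proper discontinuity, together with discreteness and faithfulness.

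The heart of the matter is steps~(ii)--(iii), and this is where rank one is decisive. Since $\mathbb{R}\text{-rank}(L)=1$, the set $\mathfrak{a}(L)$ lies within bounded distance of a single ray $\mathbb{R}_{\ge0}X_0\subset\mathfrak{a}^+$, and $\Gamma$ is convex cocompact (word-hyperbolic) in $L$. Properness of the $L$-action, $\mathfrak{a}(L)\transversal\mathfrak{a}(H)$, then upgrades to a quantitative estimate: there is an open cone $\mathcal{C}$ about $X_0$ and a $\delta>0$ such that $\mu(\gamma)\in\mathcal{C}$ and the distance from $\mu(\gamma)$ to $\mathfrak{a}(H)$ is at least $\delta\,\|\mu(\gamma)\|-O(1)$ for all $\gamma\in\Gamma$. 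I would then show that for $\phi$ in a neighborhood $\mathcal{U}$ of $\iota$ the projections $\mu(\phi(\gamma))$ stay in a slightly enlarged cone $\mathcal{C}'$ still transversal to $\mathfrak{a}(H)$, with $\|\mu(\phi(\gamma))\|\to\infty$ as $\|\mu(\gamma)\|\to\infty$; this yields $\mathfrak{a}(\phi(\Gamma))\transversal\mathfrak{a}(H)$, i.e. properness of $\phi(\Gamma)$. In the model case $L\cong\mathrm{PSO}(1,n)$ the persistence is transparent through the contraction machinery of Salein and Gu\'eritaud--Kassel (Lemma~\ref{salein_criterion}, Theorem~\ref{gkthm}): properness is witnessed by a uniformly contracting equivariant map whose optimal Lipschitz constant $C(\cdot,\cdot)$ varies continuously and so stays below $1$ on a neighborhood of $\iota$. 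In general one argues directly with $\mu$, using that $\Gamma$ is hyperbolic and that strong properness is an open condition; discreteness and faithfulness of $\phi(\Gamma)$ near $\iota$ then follow from the structural stability of convex cocompact groups (equivalently, from the Ehresmann--Thurston principle for the associated geometric structure).

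I expect step~(iii) --- the \emph{uniform} persistence of the escape estimate over the infinite group $\Gamma$ --- to be the main obstacle, and it is exactly here that rank one is indispensable. The difficulty is to convert closeness of $\phi$ to $\iota$ on a finite generating set into simultaneous control of $\mu(\phi(\gamma))$ for all $\gamma$ at once. One-dimensionality of the Cartan subspace $\mathfrak{a}_L$ makes $\mu|_\Gamma$ behave like a single length function governed by the hyperbolicity of $\Gamma$, so the open contraction condition propagates uniformly; in higher $\mathbb{R}$-rank the cone $\mathcal{C}$ could be multidimensional and an arbitrarily small perturbation might rotate some $\mu(\phi(\gamma))$ across $\mathfrak{a}(H)$, destroying properness. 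The estimate --- and hence the theorem --- is genuinely special to $\mathbb{R}\text{-rank}(L)=1$.
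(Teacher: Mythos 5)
Your skeleton does match the actual line of argument: cocompactness is indeed disposed of exactly as in your step (i), via the Serre--Kobayashi $vcd$ criterion (the lemma following Theorem~\ref{sim_thm}), and everything else reduces to showing that a \emph{quantitative} properness condition on Cartan projections is open at the inclusion. The gap is at the crux, your step (iii): the passage from closeness of $\phi$ to $\iota$ on a finite generating set to simultaneous control of $\mu(\phi(\gamma))$ for \emph{all} $\gamma\in\Gamma$ is asserted, not proved. The sentence ``one-dimensionality of $\mathfrak{a}_L$ makes $\mu|_\Gamma$ behave like a single length function governed by the hyperbolicity of $\Gamma$, so the open contraction condition propagates uniformly'' is precisely the statement requiring proof: the Cartan projection is not multiplicative, $\mu(gh)$ is in general far from $\mu(g)+\mu(h)$, so there is no a priori reason that perturbing the generators moves $\mu(\phi(\gamma))$ only sublinearly in $\|\mu(\gamma)\|$ rather than pushing some $\mu(\phi(\gamma))$ back into a neighborhood of $\mathfrak{a}(H)$. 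The missing mechanism --- and the actual technical content of Kassel's proof, as sketched after Corollary~\ref{kas_cor_1} --- is the proximal dynamics of $G$ on projective spaces $\mathbb{P}(V)$ for suitable representations $V$, which gives Benoist-type almost-additivity of $\mu$ for products of elements in general position and allows one to write an arbitrary $\gamma\in\Gamma$ as a product of elements drawn from a fixed \emph{finite} set whose Cartan projections (and those of their $\phi$-images) are uniformly controlled. Some such device is indispensable; without it the linear-divergence estimate does not propagate from generators to the whole group.

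Two secondary problems. First, your appeal to structural stability of convex cocompact groups to get discreteness and faithfulness is inapplicable as stated: $\Gamma$ is convex cocompact in $L$, but $\phi$ deforms it inside $G$ and need not land in any conjugate of $L$, so stability of convex cocompact subgroups of the rank-one group $L$ says nothing here. In the correct argument, discreteness and finiteness of $\ker\phi$ are \emph{consequences} of the same quantitative estimate (namely $\|\mu(\phi(\gamma))\|\to\infty$ as $\gamma\to\infty$ in $\Gamma$), not separate inputs. Second, the Lipschitz-contraction criterion of Lemma~\ref{salein_criterion} and Theorem~\ref{gkthm} is specific to the group-manifold case $diag(G')\backslash(G'\times G')$ with the left-right action; it is not available for a general triple $(G,H,L)$ such as $L=\textrm{SO}(1,2n)$ or $\textrm{U}(1,n)$ inside $\textrm{SO}(2,2n)$, so it cannot serve even as a model case for the theorem in the stated generality.
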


As a corollary, Kassel obtains

\begin{cor}[Kassel, \cite{kassel_deformation} Corollary 1.2]\label{kas_cor_1}
There are Zariski-dense $\Gamma$ in $\emph{SO}(2,2n)$ providing compact forms of $\emph{U}(1,n)\backslash \emph{SO}(2,2n)$.
\end{cor}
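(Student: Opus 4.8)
The plan is to combine Theorem~\ref{kassel_def} with an explicit Zariski-dense deformation of a standard form. From Table~\ref{kob_table_1} one reads off the relevant triple $G=\textrm{SO}(2,2n)$, $H=\textrm{U}(1,n)$, $L=\textrm{SO}(1,2n)$, for which $L$ acts properly discontinuously and cocompactly on $H\backslash G$. Since $\mathbb{R}$-$\textrm{rank}(\textrm{SO}(1,2n))=1$, the hypotheses of Theorem~\ref{kassel_def} are met, so for any uniform lattice $\Gamma_0<L$ there is a neighborhood $\mathcal{U}$ of the inclusion $\iota\colon\Gamma_0\hookrightarrow G$ in $\textrm{Hom}(\Gamma_0,G)$ all of whose members act properly discontinuously and cocompactly on $H\backslash G$. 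The problem thus reduces to producing a single $\phi\in\mathcal{U}$ with $\phi(\Gamma_0)$ Zariski-dense in $G$; note that the inclusion itself is \emph{not} Zariski-dense, since by the Borel density theorem $\iota(\Gamma_0)$ has Zariski closure exactly $L\subsetneq G$.

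To escape $L$ I would use a bending construction, so I first choose $\Gamma_0$ so that the closed hyperbolic manifold $M=\mathbb{H}^{2n}/\Gamma_0$ contains an embedded closed totally geodesic hypersurface $\Sigma=\mathbb{H}^{2n-1}/\Delta$; such lattices exist among those defined by quadratic forms over totally real fields. Cutting $M$ along $\Sigma$ expresses $\Gamma_0$ as an amalgam $A\ast_\Delta B$ (or an HNN extension) over $\Delta=\pi_1(\Sigma)$, a cocompact lattice in a hyperplane stabilizer $\textrm{SO}(1,2n-1)<L$. Writing $\mathbb{R}^{2,2n}=\mathbb{R}^{1,2n-1}\oplus\mathbb{R}^{1,1}$ with $\textrm{SO}(1,2n-1)$ acting standardly on the first factor and trivially on the second, the centralizer $Z_G(\Delta)=Z_G(\textrm{SO}(1,2n-1))$ contains the one-parameter group $c_t\cong\textrm{SO}(1,1)$ acting on the $\mathbb{R}^{1,1}$ summand. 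Because $c_t$ centralizes $\iota(\Delta)$, setting $\phi_t|_A=\iota|_A$ and $\phi_t|_B=c_t\,\iota(\cdot)\,c_t^{-1}|_B$ defines a genuine homomorphism $\phi_t$ with $\phi_0=\iota$; for $|t|$ small, $\phi_t\in\mathcal{U}$ and hence yields a compact form.

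It remains to verify that $\phi_t(\Gamma_0)$ is Zariski-dense for small $t\neq0$. The vertex group $A$ is nonelementary and preserves no proper totally geodesic subspace of $\mathbb{H}^{2n}$, so it is Zariski-dense in $L$; thus the Zariski closure of $\phi_t(\Gamma_0)$ contains $\overline{\iota(A)}=L$ as well as the conjugate $c_tLc_t^{-1}$ coming from the bent part. Since $c_t$ is a boost moving the positive line fixed by $L$, for $t\neq0$ the subspace $\textrm{Ad}(c_t)\mathfrak{l}$ differs from $\mathfrak{l}$ and so has nonzero component in the $L$-complement $\mathfrak{m}$, where $\mathfrak{g}=\mathfrak{l}\oplus\mathfrak{m}$ and $\mathfrak{m}\cong\mathbb{R}^{1,2n}$ carries the standard representation of $L$. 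That representation is irreducible, so the $\mathfrak{l}$-submodule generated by any nonzero element of $\mathfrak{m}$ is all of $\mathfrak{m}$; hence the Lie algebra generated by $\mathfrak{l}$ and $\textrm{Ad}(c_t)\mathfrak{l}$ equals $\mathfrak{l}\oplus\mathfrak{m}=\mathfrak{g}$, and $\phi_t(\Gamma_0)$ is Zariski-dense in $G$.

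The main obstacle is the interplay of two competing requirements: $t$ must be small enough that $\phi_t$ lands in the neighborhood $\mathcal{U}$ of Theorem~\ref{kassel_def} (so that proper discontinuity and cocompactness survive), yet nonzero so that the bending genuinely rotates $L$ to a transverse conjugate. Both hold on a punctured interval, so this is a matter of care rather than a real difficulty; the substantive content is the Zariski-closure computation, which I have reduced to the irreducibility of the standard $\textrm{SO}(1,2n)$-module $\mathfrak{m}$ together with the fact that a nonzero boost moves $L$ off itself. A softer alternative, avoiding the explicit choice of $\Sigma$, would be to establish $H^1(\Gamma_0,\mathfrak{m})\neq0$ directly and then invoke upper semicontinuity of the Zariski closure to conclude that a generic representation in $\mathcal{U}$ escapes every conjugate of $L$; by the same irreducibility argument such a representation is automatically Zariski-dense.
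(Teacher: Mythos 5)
Your argument is correct and is essentially the proof behind the cited result: the corollary follows from Theorem~\ref{kassel_def} applied to the triple $G=\mathrm{SO}(2,2n)$, $H=\mathrm{U}(1,n)$, $L=\mathrm{SO}(1,2n)$ of Table~\ref{kob_table_1}, combined with the Johnson--Millson bending construction along an embedded totally geodesic hypersurface of an arithmetic lattice of simplest type to produce small deformations with Zariski-dense image. The survey itself only states the corollary as a consequence of Theorem~\ref{kassel_def}, and your Zariski-density computation (irreducibility of $\mathfrak{m}\cong\mathbb{R}^{1,2n}$ as an $\mathfrak{l}$-module, plus the fact that the boost $c_t$ moves $\mathfrak{l}$ off itself) is the standard and correct way to finish.
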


\noindent Prior to Kassel's work, the only Zariski-dense $\Gamma$ known (for $H$ noncompact) were for homogeneous spaces of the form $diag(G')\backslash(G'\times G')$. The $\mathbb{R}$-$\textrm{rank} = 1$ condition is natural as Margulis superrigidity implies that uniform lattices $\Gamma < L$ of higher rank are locally rigid in $G$.

The idea of Kassel's proof is to study the Cartan projection for elements of $\Gamma$.  She then uses some interesting dynamics of the action of $G$ on $\mathbb{P}(V)$ for representations of $G$ on $V$.  The study of these dynamics allows one to decompose any $\gamma \in \Gamma$ as a product of elements from a finite set whose Cartan projections can be carefully controlled.

%
%%
%%%
%%%%
%%%%%
%%%%%%
%%%%%%%%%%%%%%%%%%%%%%%%%%%%%%%%%%%%
\subsubsection{The work of Guichard and Wienhard}

Recent work by Guichard and Wienhard on Anosov representations has yielded some corollaries about deformations of compact forms.  In \cite{g-w_anosov}, they prove the following:

\begin{thm}[Guichard--Wienhard, \cite{g-w_anosov} \S13]
Let $L=\emph{SO}(1,2n)$, embedded in $G=\emph{SO}(2,2n)$ in the standard way.  Let $\Gamma$ be a uniform lattice in $\emph{SO}(1,2n)$ and $\rho$ the induced embedding of $\Gamma$ into $\emph{SO}(2,2n)$.  Then any representation in the connected component of $\rho$ in $\emph{Hom}(\Gamma, \emph{SO}(2,2n))$ yields a compact form of $\emph{U}(1,n)\backslash\emph{SO}(2,2n)$.
\end{thm}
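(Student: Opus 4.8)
The plan is to recognize the standard inclusion $\rho_0$ as an Anosov representation, match the Anosov condition with the Benoist--Kobayashi properness criterion (Definition \ref{kob_defn}), and then obtain cocompactness essentially for free from the cohomological-dimension lemma. First I would set up the Cartan geometry of $G=\mathrm{SO}(2,2n)$. Its Cartan subspace $\mathfrak{a}$ is two-dimensional: writing $\mu(g)=(t_1,t_2)$ with $t_1\ge t_2\ge 0$ for the Cartan projection, the singular values of $g$ acting on $\mathbb{R}^{2+2n}$ are $e^{\pm t_1}, e^{\pm t_2}$ together with $2n-2$ ones. A direct computation gives $\mathfrak{a}(H)=\mathfrak{a}(\mathrm{U}(1,n))=\{t_1=t_2\}$ (each complex singular value of $\mathrm{U}(1,n)$ doubles over $\mathbb{R}$) and $\mathfrak{a}(L)=\mathfrak{a}(\mathrm{SO}(1,2n))=\{t_2=0\}$; these meet only at the origin, which is exactly why Kobayashi's construction (Theorem \ref{d_thm}) produces the compact standard form $\dq\cong (H\cap L)\backslash L/\Gamma$. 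The point to extract is that a discrete subgroup acts properly on $H\backslash G$ precisely when its Cartan projection stays away at infinity from the diagonal $\{t_1=t_2\}$, i.e. when the top singular-value gap $t_1-t_2$ grows.

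Next I would identify this gap condition with projective (stabilizer-of-an-isotropic-line) Anosov-ness. Since $\Gamma$ is a uniform lattice in $\mathrm{SO}(1,2n)$ it is word-hyperbolic, and for the standard inclusion $\mu(\rho_0(\gamma))=(\ell(\gamma),0)$, where $\ell$ is the translation length in $\mathbb{H}^{2n}$; cocompactness makes $\ell(\gamma)$ grow linearly in word length, so $\rho_0$ is projective Anosov and the gap $t_1-t_2=\ell(\gamma)$ is uniformly large. Conversely, any projective Anosov $\rho$ has $\mu(\rho(\gamma))$ confined to a cone with $t_1-t_2\to\infty$, so that $\mathfrak{a}(\rho(\Gamma))\transversal\mathfrak{a}(H)$ and $\rho(\Gamma)$ acts properly discontinuously on $H\backslash G$. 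Anosov representations are moreover discrete with finite kernel, so (taking $\Gamma$ torsion-free, as we may) each such $\rho$ is faithful and the action is free.

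The gain over Kassel's local Theorem \ref{kassel_def} is that cocompactness now requires no separate argument. Since $\rho$ is faithful, $\rho(\Gamma)\cong\Gamma$ and $vcd(\rho(\Gamma))=vcd(\Gamma)=2n$; computing $d(G)=d(\mathrm{SO}(2,2n))=4n$ and $d(H)=d(\mathrm{U}(1,n))=2n$, the equality $vcd(\rho(\Gamma))+d(H)=d(G)$ holds for \emph{every} faithful $\rho$ in the component. By the Serre--Kobayashi lemma stated above, a proper action realizing this equality is automatically cocompact. Thus the theorem reduces to a single assertion: every $\rho$ in the connected component of $\rho_0$ is projective Anosov.

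This last reduction is where the real work lies, and it is the main obstacle. Openness of the Anosov condition (Labourie, Guichard--Wienhard) already recovers Kassel's neighborhood result; to sweep out the \emph{entire} connected component I must also show that the projective Anosov locus is \emph{closed} within it. This fails for Anosov representations into an arbitrary target, so the argument must exploit the specific geometry: I would use Guichard and Wienhard's theory of cocompact domains of discontinuity, together with the persistence and transversality of the boundary limit maps $\partial_\infty\Gamma\to G/P$, to argue that the uniform expansion defining the Anosov property cannot degenerate along a path in the component without the limit maps colliding, which connectedness forbids. Controlling the Anosov constants uniformly along such paths is the crux. Once closedness is in hand, $\rho_0$ Anosov plus openness plus closedness force every $\rho$ in the component to be projective Anosov; properness follows as above, and the cohomological-dimension equality upgrades it to cocompactness, exhibiting each such $\rho$ as a compact form of $\mathrm{U}(1,n)\backslash\mathrm{SO}(2,2n)$.
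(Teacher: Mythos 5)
Your reduction is sound as far as it goes, and part of it is a genuinely different (and arguably cleaner) route than the published one: the Cartan-projection computation ($\mathfrak{a}(\mathrm{U}(1,n))=\{t_1=t_2\}$, $\mathfrak{a}(\mathrm{SO}(1,2n))=\{t_2=0\}$, Anosov $\Rightarrow$ linear growth of $t_1-t_2$ $\Rightarrow$ properness) combined with the Serre--Kobayashi lemma ($vcd(\Gamma)+d(H)=2n+2n=4n=d(G)$, so a proper action is automatically cocompact) replaces Guichard and Wienhard's own mechanism, which realizes the compact form as the quotient of a cocompact domain of discontinuity in a flag variety of $\mathrm{SO}(2,2n)$ and then identifies that domain with $\mathrm{U}(1,n)\backslash\mathrm{SO}(2,2n)$. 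Either way one obtains ``Anosov $\Rightarrow$ compact form'' for this pair $(G,H)$, and your bookkeeping (discreteness and finite kernel from the Anosov property, freeness from torsion-freeness plus properness) is correct.

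The gap is exactly at the step you flag as the crux: showing that \emph{every} representation in the connected component of $\rho$ is Anosov. Openness of the Anosov condition only recovers the local statement, i.e.\ Theorem \ref{kassel_def} and Corollary \ref{kas_cor_1}; to sweep out the whole component you need the Anosov locus to be closed in it, and your proposed argument (uniform expansion cannot degenerate along a path without the limit maps colliding, ``which connectedness forbids'') is not a proof and cannot be made into one by soft means. The Anosov locus in a representation variety is open but in general \emph{not} closed: sequences of Anosov representations can degenerate to non-Anosov, even non-discrete, limits, with limit maps colliding or failing to converge. The missing ingredient is a theorem of Barbot \cite{barbot}, which Guichard and Wienhard invoke at precisely this point: the entire connected component of the Fuchsian locus in $\mathrm{Hom}(\Gamma,\mathrm{SO}(2,2n))$ consists of Anosov representations (relative to the stabilizer of an isotropic line). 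Barbot's proof is specific to the Lorentzian geometry of this target --- globally hyperbolic anti-de Sitter spacetimes and the structure of the invariant achronal limit set --- and is not a consequence of controlling Anosov constants along paths. Without that input your argument proves only the open-neighborhood statement, not the theorem as stated.
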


That non-trivial deformations of compact forms for these spaces exist is not new (see Prop. \ref{kob_def_result} and Thm. \ref{kassel_def}).  However, Guichard and Wienhard produce them in a new way.  Their paper deals with Anosov representations of discrete groups into semisimple Lie groups $G'$ -- a definition due to Labourie (\cite{lab_anosov}) that they prove unifies many previous examples of special representations.  To each such representation they can associate a flag variety of $G'$ (formed by quotienting out by a parabolic subgroup) and on this flag variety there is a domain of discontinuity on which $\Gamma$ acts properly discontinuously and cocompactly.  In some special cases, this domain has the structure of a homogeneous space for some $G < G'$, giving rise to their examples.  For the theorem above, they use Barbot's result (\cite{barbot}) that the full connected component of $\rho$ consists of Anosov representations (relative to a certain parabolic subgroup) to obtain a stronger conclusion than Theorem \ref{kassel_def} and Corollary \ref{kas_cor_1} have obtained: \emph{all} of these representations give compact forms.

These examples are a small part of an extensive paper.  Guichard and Wienhard do not undertake an exhaustive search for applications to compact forms; it is likely that there are more to be found.

%In some very recent work, Gu\'eritaud and Kassel have extended the work above.  In \cite{g-k2}, they study deformations of compact forms for rank-one groups.  Let $G$ be a semisimple group with real-rank one and $\Gamma$ a discrete subgroup of $G\times G$ acting freely, properly discontinuously and cocompactly on $G$ by left- and right-multiplication.  They prove that, once again, these compact forms can be locally deformed:

%\begin{thm}[Gu\'eritaud-Kassel, \cite{g-k2} Thm 1]
%In the setting above, there exists an open neighborhood $\mathcal{U}\subset Hom(\Gamma, G\times G)$ of the natural inclusion such that for all $\phi \in \mathcal{U}$, $\phi(\Gamma)$ is discrete in $G\times G$ and still acts freely, properly discontinuously and cocompactly on $G$, yielding a compact form.
%\end{thm}

%\noindent Gu\'eritaud and Kassel actually prove that a slightly larger class of $\Gamma$ -- those they call  \emph{convex cocompact} can be deformed.

%
%%
%%%
%%%%
%%%%%
%%%%%%%%%%%%%%%%%%%%%%%%%%

\subsection{Deformations of the homogeneous space}

To close this section, I would like to briefly touch on the work of Oh, Witte-Morris and Iozzi on deformations of compact forms in another sense.  Rather than deforming the embedding of $\Gamma$ in $G$ to produce new compact forms of $H\backslash G$, they deform the embedding of $H$ in $G$, thereby producing new homogeneous spaces that admit nonstandard compact forms.  In this approach the analogy with Teichm\"uller theory breaks down.  We are no longer studying the deformation space of compact forms of a given homogeneous space; rather, the underlying homogeneous space changes.  These results, rather, speak to the wide (and likely wild) world of compact forms that exist.

Oh and Witte-Morris's work is announced in the paper \cite{Oh_Witte_new} and treated in full detail in \cite{Oh_Witte_compact}.  Iozzi and Witte-Morris continue this work in \cite{Witte_Iozzi}. Oh and Witte-Morris deal with homogeneous spaces of $\textrm{SO}(2,n)$ and their main result divides into two pieces depending on the parity of $n$:

Let $G=\textrm{SO}(2, 2m)$, presented as the group preserving the form $2x_1x_{n+2}+2x_2x_{n+1}+\sum_{i=3}^{n}x_i^2$.  Let $H_{\textrm{SU}}$ be the intersection of $\textrm{SU}(1, m)$ (embedded in $G$ in the standard way) with $AN$ where $A$ consists of the diagonal elements in $G$ with positive entries and $N$ consists of the upper-triangular matrices with ones along the diagonal.  Note that $SU(1,m)/H_{\textrm{SU}}$ is compact and that, in contrast to the reductive situation discussed through most of this survey, $H_{\textrm{SU}}$ is solvable. Let $\Gamma$ be a uniform lattice in $\textrm{SO}(1,2m)$; it will act properly discontinuously and cocompactly on $H_{\textrm{SU}}\backslash \textrm{SO}(2,2m)$ because it does so on $\textrm{SU}(1,m)\backslash \textrm{SO}(2,2m)$.

\begin{thm}[Oh--Witte-Morris \cite{Oh_Witte_compact} Thms 1.5 \& 1.7]
\begin{itemize}
	\item[(1)] For a specific family of deformations $H_B$ of $H_{\emph{SU}}$, $H_B\backslash G/\Gamma$ yields a compact form.  Moreover, for almost all choices of the deforming parameter $B$, the group $H_B$ is not conjugate into $\emph{SU}(1,m)$, so these are truly new examples. (See their paper for details on $H_B$.)
	\item[(2)] Among closed, connected, upper triangular, noncompact subgroups $H$ of the Borel subgroup of $G$ such that $H \backslash G$ is noncompact, the only $H\backslash G$ possessing compact forms are those which are conjugate to a cocompact subgroup of $\emph{SO}(1, 2m)$ or to one of the $H_B$.  (See section 3.9 of their paper for an argument that reduces any closed, connected $H$ to the upper triangular case.)
\end{itemize}
\end{thm}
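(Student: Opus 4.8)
The plan is to run both parts through the Cartan-projection dictionary for proper actions from Section \ref{sec_cartan_proj}, combined with a dimension-theoretic handling of cocompactness, working throughout inside the real-rank-two group $G=\mathrm{SO}(2,2m)$, whose restricted root system is of type $B_2$. The first preparatory step is to fix a maximal $\mathbb{R}$-split torus $A$ with Cartan subspace $\mathfrak{a}\cong\mathbb{R}^2$, to record the positive Weyl chamber, and to compute the Cartan projections $\mathfrak{a}(\mathrm{SO}(1,2m))$, $\mathfrak{a}(\mathrm{SU}(1,m))$ and $\mathfrak{a}(H_{\mathrm{SU}})$. Since $\mathrm{SO}(1,2m)$ and $\mathrm{SU}(1,m)$ have real rank one, each of these is asymptotic to a single ray; because $H_{\mathrm{SU}}$ is cocompact in $\mathrm{SU}(1,m)$ one has $\mathfrak{a}(H_{\mathrm{SU}})\sim\mathfrak{a}(\mathrm{SU}(1,m))$, and the two rays for $\mathrm{SO}(1,2m)$ and $\mathrm{SU}(1,m)$ are distinct. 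I would also record the key observation that the asymptotic ray of $\mathfrak{a}(H)$ is a $G$-conjugacy invariant: if $H'=gHg^{-1}$ then $H'\subset \{g\}H\{g^{-1}\}$, so $\mathfrak{a}(H')\sim\mathfrak{a}(H)$. Via the criterion $\Gamma\transversal H \Leftrightarrow \mathfrak{a}(\Gamma)\transversal\mathfrak{a}(H)$, the transversality of these two rays already encodes the proper action of a lattice $\Gamma<\mathrm{SO}(1,2m)$ on $H_{\mathrm{SU}}\backslash G$.

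For part (1), properness of the deformed family is the easy half. The groups $H_B$ lie in the Borel subgroup $AN$, so their Cartan projections can be read off from the upper-triangular matrix entries, the dominant contribution coming from the one-dimensional split part while the unipotent part contributes only logarithmically; thus $\mathfrak{a}(H_B)$ is again (asymptotically) a ray depending continuously on $B$. Since transversality of two rays in $\mathfrak{a}$ is an open condition, the specific family is chosen so that $\mathfrak{a}(H_B)$ stays off the fixed ray $\mathfrak{a}(\Gamma)$, giving a properly discontinuous $\Gamma$-action. The ``truly new'' assertion then follows cleanly from the conjugacy invariance above: the ray $\mathfrak{a}(H_B)$ is a real-analytic function of $B$, and if $H_B$ were conjugate into $\mathrm{SU}(1,m)$ this ray would have to coincide with $\mathfrak{a}(\mathrm{SU}(1,m))$; hence the locus where it does is a proper subvariety, and for almost all $B$ the space $H_B\backslash G$ is not the standard one. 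Cocompactness is the delicate half: as $H_B$ is solvable rather than reductive, the clean equality $vcd(\Gamma)+d(H)=d(G)$ is unavailable. My plan is to establish it by exhibiting a compact $\Omega\subset G$ with $H_B\,\Omega\,\Gamma=G$, starting from the identity $H_{\mathrm{SU}}\,\Omega_0\,\Gamma=G$ for the standard form (itself inherited from the compact fibration $H_{\mathrm{SU}}\backslash G/\Gamma \to \mathrm{SU}(1,m)\backslash G/\Gamma$ with fiber $\mathrm{SU}(1,m)/H_{\mathrm{SU}}$, cf.\ Table \ref{kob_table_1}) and perturbing $\Omega_0$ as $B$ varies, using the proper, free, continuous variation of the quotient manifolds of fixed dimension.

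For part (2) the strategy is a finite classification. Any admissible $H$ lies in $AN$, so $\mathfrak{a}(H)$ is computable from its upper-triangular form, and I would enumerate the possible asymptotic rays as $H$ ranges over closed, connected, noncompact subgroups of the Borel with $H\backslash G$ noncompact. The properness criterion forces any $\Gamma$ acting properly and cocompactly to have $\mathfrak{a}(\Gamma)$ escaping every tube around $\mathfrak{a}(H)$; since $G$ has real rank two and $\mathbb{R}\text{-rank}(H)=1=\mathbb{R}\text{-rank}(G)-1$, a rank argument in the spirit of Kassel's description of proper $\Gamma$ in this corank-one situation (\cite{kassel_proper}) pins $\mathfrak{a}(\Gamma)$ down to a neighborhood of a single ray. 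Matching this ray against the enumerated $\mathfrak{a}(H)$, and imposing the dimension count needed for cocompactness, should leave exactly the subgroups conjugate into a cocompact subgroup of $\mathrm{SO}(1,2m)$ together with the family $H_B$, which is the assertion (the reduction of an arbitrary closed connected $H$ to the upper-triangular case being handled as in \cite{Oh_Witte_compact}).

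The main obstacle throughout will be cocompactness in this non-reductive setting. Properness is governed cleanly and stably by the Cartan-projection dictionary, but because the $H_B$ and the candidate groups in part (2) are solvable, the cohomological-dimension equality that characterizes compact quotients for reductive $H$ does not apply off the shelf. One must instead transport cocompactness in part (1) by a careful fundamental-domain/perturbation argument and, in part (2), supply a direct argument ruling out the proper-but-not-cocompact candidates. Controlling the interaction of the split and unipotent parts in the Cartan projection of the solvable groups $H_B$ — so that the relevant rays stay transversal while the quotient stays compact — is where the real work lies.
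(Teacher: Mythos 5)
The survey itself offers no proof of this theorem --- it is quoted from Oh--Witte-Morris \cite{Oh_Witte_compact}, with the only methodological hint being that the streamlined proof in \cite{Witte_Iozzi} rests on an \emph{a priori} lower bound for the dimension of $H$ feeding a case-by-case analysis. Measured against that, your overall frame (Cartan projections for properness, a dimension count for cocompactness) is the right one, but two of your key steps fail as stated. First, your handling of Cartan projections of subgroups of $AN$ is wrong: it is not true that ``the unipotent part contributes only logarithmically'' so that $\mathfrak{a}(H)$ is asymptotically a ray. A unipotent one-parameter subgroup has unbounded Cartan projection (the projection of $\exp(tN)$ grows like $\log|t|$, but $t$ ranges over all of $\mathbb{R}$), and in fact the full unipotent radical $N$ of a minimal parabolic satisfies $N\sim G$; many closed connected subgroups of $AN$ in $\textrm{SO}(2,2m)$ have two-dimensional Cartan projection image and are Cartan-decomposition subgroups. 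Determining $\mathfrak{a}(H)$ for \emph{every} closed connected subgroup of $AN$ is the technical core of Oh and Witte's work (it occupies a separate paper), and your part (2) silently assumes this classification outputs a short list of rays. Relatedly, Kassel's corank-one structure theorem \cite{kassel_proper} is proved for reductive $H$ and cannot be invoked off the shelf for these solvable candidates.

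Second, your ``truly new examples'' argument is circular. The similarity class of $\mathfrak{a}(H_B)$ is indeed a conjugacy invariant, but for the construction to produce a compact form at all, $\mathfrak{a}(H_B)$ must remain transversal to $\mathfrak{a}(\textrm{SO}(1,2m))$ with $\dim H_B=\dim H_{\textrm{SU}}$ fixed --- i.e.\ the invariant you propose is (essentially) constant in $B$ and equal to that of $H_{\textrm{SU}}\subset \textrm{SU}(1,m)$, so it cannot certify non-conjugacy into $\textrm{SU}(1,m)$ for any $B$. The non-conjugacy in \cite{Oh_Witte_compact} is established by a direct algebraic/Lie-theoretic computation, not by the Cartan projection. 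Finally, you abandon the cohomological-dimension criterion too quickly: since $H_B\subset AN$ and $AN$ acts simply transitively on $G/K$, the group $H_B$ acts properly and freely on the symmetric space, $H_B\backslash G/K$ is contractible of dimension $d(G)-\dim H_B$, and the equality $vcd(\Gamma)+\dim H = d(G)$ characterizes cocompactness exactly as in the reductive case with $\dim H$ in place of $d(H)$. This is precisely the dimension bound the survey alludes to, and it replaces your fundamental-domain perturbation argument, which would in any case only give an open (not the full) range of parameters $B$.
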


\noindent A streamlined proof of this result is provided in \cite{Witte_Iozzi}; the key advance is an \emph{a priori} lower bound on the dimension of $H$, which makes the subsequent case-by-case analysis quicker.  For specifics on $H_B$ we refer the reader to \cite{Oh_Witte_compact}, but remark that the constructions are entirely explicit.  The upshot of this theorem is that the homogeneous spaces of $\textrm{SO}(2,2m)$ are entirely understood.  Those $H$ conjugate into $\textrm{SO}(1, 2m)$ return us to the examples studied by Kobayashi in \cite{kobayashi_def} with their nontrivial deformation space.

Now let $G=\textrm{SO}(2, 2m+1)$.  As the algebraic construction of Conjecture \ref{kob_conj} does not hold for this $G$ and $H=\textrm{SU}(1, m)$ it is conjectured that there are no compact forms of $\textrm{SU}(1,m)\backslash G$.  This problem is still open, but Oh and Witte-Morris show that its solution will settle all questions for homogeneous spaces of $\textrm{SO}(2, 2m+1)$:

\begin{thm}[Oh--Witte-Morris \cite{Oh_Witte_compact} Thm 1.9]
Let $G=\emph{SO}(2, 2m+1)$ and let $H$ be closed, connected, noncompact with $H\backslash G$ noncompact.  If $\emph{SU}(1,m)\backslash G$ has no compact form, then neither does $H\backslash G$.
\end{thm}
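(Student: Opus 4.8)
The plan is to prove the contrapositive: assuming some $H\backslash G$ as in the statement admits a compact form $\dq$, I would manufacture a compact form of $\textrm{SU}(1,m)\backslash G$, contradicting the hypothesis. Throughout, $G=\textrm{SO}(2,2m+1)$ has real rank $2$ and restricted root system of type $B_2$, so the closed Weyl chamber $\mathfrak{a}^+$ is two-dimensional; I write $\mathfrak{a}(S)$ for the Cartan projection of a subset $S\subset G$ as in Section \ref{sec_cartan_proj}. Since both properness and cocompactness are controlled by the Cartan projection, the argument is organized entirely around the possible shapes of $\mathfrak{a}(H)$ in $\mathfrak{a}^+$. The first step is to pass to the upper triangular case, i.e. to assume $H$ lies in a minimal parabolic $P=MAN$; this reduction is carried out in section 3.9 of \cite{Oh_Witte_compact}, which I take as given.

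Next I would eliminate the cases in which $\mathfrak{a}(H)$ is too large. Because the Cartan projection is proper, the properness criterion $\mathfrak{a}(\Gamma)\transversal \mathfrak{a}(H)$ forces $\mathfrak{a}(\Gamma)$ to be bounded — hence $\Gamma$ finite — whenever $\mathfrak{a}(H)$ is cobounded in $\mathfrak{a}^+$; in particular this disposes of the full-chamber case and, via Theorem \ref{cm}, of the case where $H$ has split rank $2=\mathbb{R}$-rank$(G)$. The delicate intermediate possibility is that $\mathfrak{a}(H)$ is a non-cobounded but genuinely two-dimensional subcone (which can occur from unipotent contributions even when $H$ has split rank $1$). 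These I would rule out by a dimension count: after comparing $H$ with a reductive model, the virtual cohomological dimension lemma of Section \ref{sec_topology} forces $vcd(\Gamma)$ to be large for cocompactness, whereas $\mathfrak{a}(\Gamma)\transversal \mathfrak{a}(H)$ confines $\mathfrak{a}(\Gamma)$ to the thin complementary region, and the two requirements are incompatible. As in the streamlined even-case treatment of \cite{Witte_Iozzi}, I expect an a priori lower bound on $\dim H$ to reduce this to finitely many candidate shapes.

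What remains is a short list of $H$ whose Cartan projection is coarsely a single ray, and here lies the crux: I would compute the rays $\mathfrak{a}(\textrm{SU}(1,m))$ and $\mathfrak{a}(\textrm{SO}(1,2m+1))$ explicitly from the $B_2$ root data and compare each surviving $\mathfrak{a}(H)$ with them. If $\mathfrak{a}(H)$ agrees with the ray of $\textrm{SO}(1,2m+1)$, then $H\backslash G$ has no compact form outright, because $\textrm{SO}(1,2m+1)\backslash G=\textrm{O}(1,2m+1)\backslash \textrm{O}(2,2m+1)$ is of the form $\textrm{O}(p,q)\backslash \textrm{O}(p+1,q)$ with $p=1$ and $q=2m+1$ both odd, to which Kulkarni's nonexistence theorem applies. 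Otherwise $\mathfrak{a}(H)\sim \mathfrak{a}(\textrm{SU}(1,m))$; since similarity of subsets is detected by similarity of their Cartan projections (Definition \ref{kob_defn}), this gives $H\sim \textrm{SU}(1,m)$, say $H\subset S\,\textrm{SU}(1,m)\,S$ for some compact $S$. Because transversality is preserved under similarity, $\Gamma\transversal H$ yields $\Gamma\transversal \textrm{SU}(1,m)$, so $\Gamma$ acts properly on $\textrm{SU}(1,m)\backslash G$; and writing $G=HC\Gamma$ with $C$ compact (from compactness of $\dq$) and substituting $H\subset S\,\textrm{SU}(1,m)\,S$ gives $G=\textrm{SU}(1,m)(SC)\Gamma$, so $\textrm{SU}(1,m)\backslash G/\Gamma$ is compact as well. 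In either branch, if $\textrm{SU}(1,m)\backslash G$ has no compact form, then neither does $H\backslash G$.

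The main obstacle is the combination of the second paragraph with the ray comparison: one must genuinely enumerate the upper triangular $H$ with $H\backslash G$ noncompact, compute each $\mathfrak{a}(H)$, and verify the dichotomy ``either a nonexistence criterion applies or $\mathfrak{a}(H)\sim\mathfrak{a}(\textrm{SU}(1,m))$.'' This is exactly where the oddness of $2m+1$ is decisive: it is what lets Kulkarni's obstruction act on the $\textrm{SO}(1,2m+1)$ ray and what prevents the appearance of an analogue of the exotic family $H_B$ found for $\textrm{SO}(2,2m)$, so that every surviving case funnels to $\textrm{SU}(1,m)$ rather than producing new compact forms.
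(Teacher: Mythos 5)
First, a point of reference: the survey does not reproduce a proof of this theorem --- it only quotes it from \cite{Oh_Witte_compact} and remarks that the streamlined treatment in \cite{Witte_Iozzi} rests on reducing to upper-triangular $H$ and on an \emph{a priori} lower bound for $\dim H$. Your outline does track the actual strategy of those papers (restriction to $H<AN$, analysis of Cartan projections, a case-by-case enumeration funneling every candidate to $\textrm{SO}(1,2m+1)$ or $\textrm{SU}(1,m)$), so the architecture is right, and of course the genuinely hard part --- the enumeration of closed connected subgroups of $AN$ and the computation of their projections --- is deferred, as you acknowledge.

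There is, however, a concrete gap in the step you do write out: the transfer of \emph{cocompactness} along similarity. From $H\subset S\,\textrm{SU}(1,m)\,S$ and $G=HC\Gamma$ you conclude $G=\textrm{SU}(1,m)(SC)\Gamma$, but this requires commuting the compact set $S$ past the subgroup, and $S\,\textrm{SU}(1,m)\neq \textrm{SU}(1,m)\,S'$ in general; the containment only gives $G=S\,\textrm{SU}(1,m)\,(SC)\Gamma$, which says nothing about $\textrm{SU}(1,m)\backslash G/\Gamma$. Indeed similarity preserves properness but \emph{not} cocompactness: a hyperbolic one-parameter subgroup $A_1$ of $\textrm{SU}(1,m)$ satisfies $A_1\sim \textrm{SU}(1,m)$ in $G$, yet $A_1\backslash G/\Gamma$ is never compact when $\textrm{SU}(1,m)\backslash G/\Gamma$ is, since $A_1\backslash G$ fibers over $\textrm{SU}(1,m)\backslash G$ with noncompact fiber. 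The correct mechanism is the cohomological-dimension lemma of Section \ref{sec_topology}: properness gives $vcd(\Gamma)+d(\textrm{SU}(1,m))\leq d(G)$, compactness of $\dq$ gives $vcd(\Gamma)=d(G)-d(H)$, and one concludes $\textrm{SU}(1,m)\backslash G/\Gamma$ is compact only after establishing $d(H)=d(\textrm{SU}(1,m))=2m$ (and likewise $d(H)=2m+1$ in the Kulkarni branch, which suffers from the same issue). This is precisely where the \emph{a priori} dimension bound of \cite{Witte_Iozzi} enters: it is not merely a device for shortening the case list, as your second paragraph suggests, but the essential ingredient that makes the reduction to $\textrm{SU}(1,m)$ legitimate. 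As written, your final step would fail; with the dimension count inserted it becomes the argument Oh and Witte-Morris actually give.
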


Finally, Iozzi and Witte-Morris obtain the following analogous result for homogeneous spaces of $G=\emph{SU}(2, 2m)$:

\begin{thm}[Iozzi--Witte-Morris, \cite{Witte_Iozzi} Thm 11.5$''$]
Let $G=\emph{SU}(2,2m)$ and let $H$ be a closed, connected, noncompact subgroup of $G$ with $H\backslash G$ noncompact.  Then $H\backslash G$ admits a compact Clifford--Klein form if and only if $d(H)=4m$ and $H$ belongs to a specific family of deformations of $\emph{SU}(1,2m)$ or $\emph{Sp}(1,m)$ in $G$. (Recall that $d(H)$ was defined in Prop \ref{signature_prop}.)
\end{thm}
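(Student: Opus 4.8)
The plan is to prove the two directions of the equivalence separately, adapting to $\mathrm{SU}(2,2m)$ the template Oh and Witte-Morris developed for $\mathrm{SO}(2,n)$. The organizing principle, coming from the Benoist--Kobayashi theory of Section~\ref{sec_cartan_proj}, is that whether $\dq$ can be made compact depends on $H$ only through its Cartan projection $\mathfrak{a}(H)$ up to similarity, together with the cohomological-dimension bookkeeping of the Serre--Kobayashi lemma. The first reduction I would carry out is the one indicated in \S3.9 of \cite{Oh_Witte_compact}: using the Levi and Iwasawa decompositions, replace an arbitrary closed connected noncompact $H$ by a conjugate whose proper-and-cocompact behavior is governed by an upper-triangular subgroup of a minimal parabolic $MAN$ of $G$. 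Because $\mathbb{R}\text{-rank}(G)=2$, the relevant $\mathfrak{a}$ is only two-dimensional, so this turns the question into a finite classification of upper-triangular (typically solvable) subgroups.

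Before classifying I would pin down the rank and dimension of any admissible $H$. Since $H\backslash G$ is noncompact, the Calabi--Markus phenomenon (Theorem~\ref{cm}) forces $\mathbb{R}\text{-rank}(H)<\mathbb{R}\text{-rank}(G)=2$, and because $H$ must project nontrivially to $A$ this gives $\mathbb{R}\text{-rank}(H)=1$---exactly the rank-deficiency-one regime analyzed by Kassel (\cite{kassel_proper} Thm 1.1), whose control of $\mathfrak{a}(\Gamma)$ I would use throughout. Every rank-one reductive subgroup $H'$ of $\mathrm{SU}(2,2m)$ satisfies $d(H')\leq 4m$, with equality only for conjugates of $\mathrm{SU}(1,2m)$ and $\mathrm{Sp}(1,m)$; combined with the Serre--Kobayashi equality $vcd(\Gamma)+d(H)=d(G)=8m$, which holds precisely when $\dq$ is compact, and Kobayashi's similarity obstruction (Theorem~\ref{sim_thm}), which excludes any $H$ similar to a strictly larger reductive subgroup, this drives $d(H)$ to the ``half-dimensional'' value $4m$ of the criterion in Theorem~\ref{d_thm}.

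For the \emph{if} direction I would begin from the fact that $\mathrm{Sp}(1,m)$ and a copy of $\mathrm{SU}(1,2m)$ are precisely the two half-dimensional reductive subgroups appearing in the standard construction for $\mathrm{SU}(2,2m)$ (Table~\ref{kob_table_1}): each already yields a standard compact form with $\Gamma$ a uniform lattice in the complementary reductive subgroup. To reach the deformed, genuinely non-reductive members of each family, I would replace the reductive $H$ by a solvable subgroup $H_0$ that is cocompact in the relevant orbit---so that the same $\Gamma$ continues to act cocompactly---and then check that the explicit deformations $H_B$ of $H_0$ keep the $\Gamma$-action properly discontinuous. Properness survives because the deformation moves $\mathfrak{a}(H)$ only within the rank-one cone transverse to $\mathfrak{a}(\Gamma)$; in $MAN$-coordinates this is an explicit, if lengthy, computation of the same type Oh and Witte-Morris perform.

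The main obstacle is the \emph{only if} direction: the exhaustive case analysis showing that no upper-triangular $H$ with $d(H)=4m$ and $\mathbb{R}\text{-rank}(H)=1$, other than the listed deformations of $\mathrm{SU}(1,2m)$ and $\mathrm{Sp}(1,m)$, admits a compact form. For each candidate one must either produce an enlargement $H'\sim H$ with $d(H')>d(H)$ to invoke Theorem~\ref{sim_thm}, or identify the candidate with a member of one of the two families. The delicate point is that the solvable deformations genuinely enlarge the list beyond the reductive subgroups, so one cannot simply quote Oni\v{s}\v{c}ik-type classifications of reductive decompositions; instead one must track which Cartan-projection rays in the two-dimensional $\mathfrak{a}$ are actually realized by upper-triangular subgroups and, using the quaternionic structure underlying $\mathrm{Sp}(1,m)$, separate the $\mathrm{Sp}(1,m)$-family from the $\mathrm{SU}(1,2m)$-family inside $\mathrm{SU}(2,2m)$. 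This bookkeeping, rather than any single conceptual step, is where the real work lies.
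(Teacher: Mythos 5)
First, a caveat: the survey does not prove this theorem; it is quoted from Iozzi--Witte-Morris \cite{Witte_Iozzi} (Thm 11.5$''$), and the text only reports the shape of their argument (reduction to upper-triangular subgroups as in \S 3.9 of \cite{Oh_Witte_compact}, an \emph{a priori} lower bound on $\dim H$, and a case-by-case analysis). Measured against that reported strategy, your outline is pointed in the right direction: the reduction to subgroups of $AN$, the use of the Calabi--Markus phenomenon and the Serre--Kobayashi $vcd$ identity to force $d(H)=4m$, and the replacement of the reductive models by cocompact solvable subgroups before deforming are all genuinely the moves that the Oh--Witte-Morris/Iozzi--Witte-Morris papers make.

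The problem is that what you have written is a plan whose decisive steps are all deferred, and the deferred steps are the theorem. Concretely: (1) the assertion that every rank-one reductive subgroup of $\mathrm{SU}(2,2m)$ satisfies $d(H')\leq 4m$ with equality only for conjugates of $\mathrm{SU}(1,2m)$ and $\mathrm{Sp}(1,m)$ is itself an unproved classification claim, and in any case the candidates that survive the dimension count are mostly \emph{non}-reductive, so this does not bound the case list; (2) the entire ``only if'' direction --- excluding every upper-triangular $H$ with the right numerology that is not on the list --- is acknowledged as ``where the real work lies'' and then not done; (3) in the ``if'' direction, the claim that the deformations $H_B$ keep $\mathfrak{a}(H_B)$ transverse to $\mathfrak{a}(\Gamma)$ is exactly the properness verification that must be computed, not a reason it holds. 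Two smaller technical gaps: the $vcd$ lemma is stated in the survey only for $H\backslash G$ of reductive type, so applying it to solvable $H$ requires the extension that is precisely the ``key advance'' the survey attributes to \cite{Witte_Iozzi}; and for non-reductive $H$ the conclusion ``$\mathbb{R}$-rank$(H)=1$'' is not forced (a unipotent $H$ is noncompact with unbounded Cartan projection and no split torus) --- the correct invariant throughout is the asymptotic cone of $\mathfrak{a}(H)$, not a rank. As it stands the proposal establishes necessary conditions modulo unproved classification facts and verifies sufficiency for no one.
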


\noindent Again, the reader is referred to the paper for specifics of the deformations, but they are explicitly given.

%
%%
%%%
%%%%
%%%%%%%%%%%%%%%%%%%%%%%%%%

\section{The road ahead}

Despite the extensive work on the existence question for compact forms, there are still many open cases.  The author's favorite is $\SL {n-2}\backslash \SL {n}$.\footnote{Recall that the case $\SL 2 \backslash \SL 4$ is known to have no compact form by Corollary \ref{shalom_cor}.}  Two very different approaches to the problem -- the `topological' approach via Cartan projections of Benoist and Kobayashi, and the dynamical approach initiated by Zimmer -- come right up to this problem, but both fail critically.  We certainly expect there are no compact forms, but a new idea seems necessary, even for such an algebraically simple example.  There is no shortage of other homogeneous spaces for which the existence question is open.

The deformation question is somewhat less developed, and there is certainly plenty more to be done.  The work of Salein indicates how interesting a moduli space for even the simplest examples will be, and one expects that there is far more to the full moduli space than he and Gu\'eritaud--Kassel have discovered.  The work of Oh, Witte-Morris and Iozzi indicates that there are plenty of new compact forms to be discovered as we loosen the restrictions on the homogeneous space.

I would like to close this survey, however, by briefly mentioning a very recent result of Kassel and Kobayashi in which they have gone beyond the existence and deformation questions, and begun to study the spectral theory of these spaces.

Let $\sigma$ be an involutive automorphism of $G$ and let $H=(G^\sigma)_o$ be the identity component of the set of fixed points, so that $H\backslash G$ is a pseudo-Riemannian symmetric space.  Let $\theta$ be a Cartan involution of $G$ commuting with $\sigma$ and $K$ the corresponding maximal compact subgroup.  Let $\textrm{Spec}_d(\dq)$ be the set of eigenvalues associated to $L^2$ eigenfunctions of the pseudo-Riemannian Laplacian, i.e. the discrete spectrum of this operator.  Kobayashi and Kassel introduce the notion of `sharpness' in their paper \cite{ko-kas}.  Roughly speaking, we say that the pair $(H, \Gamma)$ satisfies the \emph{sharpness condition} if the Cartan projection of $\Gamma$ diverges \emph{linearly} from the Cartan projection of $H$ as one heads to infinity in the Cartan subgroup.  A precise formulation can be found in \cite{ko-kas} \S 1.6.

\begin{thm}[Kassel--Kobayashi, announced in \cite{ka-ko}, detailed proofs in \cite{ko-kas}]
Suppose that $\emph{rank}(H\backslash G) = \emph{rank} ((K\cap H)\backslash H)$ where by rank we mean the dimension of a maximal, semisimple, abelian subspace in the set of fixed points of $-d\sigma$.  Suppose that the pair $(H, \Gamma)$ satisfies the sharpness condition. Then: 
\begin{enumerate}
	\item For any compact form $\dq$, the spectrum $\emph{Spec}_d(\dq)$ is infinite.  
	\item For standard compact forms with $\Gamma < L$ and $\mathbb{R}$-rank$(L)=1$ and for all compact forms of $diag(\emph{SO}(1,n))\backslash (\emph{SO}(1,n)\times \emph{SO}(1,n))$, there is an infinite subset of $\emph{Spec}_d(\dq)$ which is stable under any small deformation of $\dq$.
\end{enumerate}
\end{thm}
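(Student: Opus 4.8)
The plan is to manufacture $L^2$-eigenfunctions on the compact form $\dq$ by a transfer procedure: first produce eigenfunctions of the pseudo-Riemannian Laplacian $\square$ (and more generally of the algebra $\mathbb{D}(X)$ of $G$-invariant differential operators) on the symmetric space $X=H\backslash G$ itself, and then average them over $\Gamma$ to descend to $\dq$. The crucial structural observation is that every operator in $\mathbb{D}(X)$ commutes with the right $G$-action, hence with right translation by $\Gamma$, and so descends to $\dq$; thus a right-$\Gamma$-invariant eigenfunction on $X$ produces an eigenfunction on $\dq$ with the \emph{same} eigenvalue. Since $\dq$ is compact (and carries the pseudo-Riemannian structure of Proposition \ref{signature_prop}), such an eigenfunction automatically lies in $L^2(\dq)$, so the only genuine analytic work is convergence and, above all, non-vanishing of the averaged function.

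First I would exploit the rank hypothesis. The equality $\mathrm{rank}(H\backslash G)=\mathrm{rank}((K\cap H)\backslash H)$ is an equal-rank condition of Flensted-Jensen and Matsuki--Oshima type, guaranteeing that $X$ carries a nonempty \emph{discrete series}: an infinite family $\{\phi_\nu\}$ of joint $\mathbb{D}(X)$-eigenfunctions lying in $L^2(X)$, indexed by Harish-Chandra parameters $\nu$ running over a lattice cone, with Casimir eigenvalue $\lambda(\nu)$ and, critically, exponential decay transverse to $H$ at a rate growing with $|\nu|$. These eigenvalues are intrinsic to $X$ and carry no dependence on $\Gamma$. Next I form the generalized Poincar\'e series $(\mathcal{S}\phi_\nu)(x)=\sum_{\gamma\in\Gamma}\phi_\nu(x\gamma)$, which is formally right-$\Gamma$-invariant. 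Here the sharpness hypothesis does the analytic work: because the Cartan projection $\mu(\gamma)$ of $\Gamma$ diverges \emph{linearly} away from $\mathfrak{a}(H)$, the translates $x\gamma$ escape to infinity in $X$ at a linear rate transverse to $H$, so $\phi_\nu(x\gamma)$ decays exponentially in $\mu(\gamma)$ at a rate proportional to $|\nu|$; since $\Gamma$ has only exponential growth, choosing $|\nu|$ large makes the series converge absolutely and locally uniformly to a smooth function on $\dq$.

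The heart of the argument is then to show $\mathcal{S}\phi_\nu\not\equiv 0$ for a sequence of parameters with $\lambda(\nu)\to\infty$, so as to land infinitely many values in $\mathrm{Spec}_d(\dq)$. I would argue by a leading-term analysis: for $|\nu|$ large the series is dominated by the boundedly many $\gamma$ with smallest $\mu(\gamma)$ (controlled by the sharpness constant and the injectivity radius), so near a suitable point $\mathcal{S}\phi_\nu$ is a small perturbation of the single term $\phi_\nu$, which is nonzero; hence $\mathcal{S}\phi_\nu\neq 0$ once $|\nu|$ exceeds a threshold depending on $(H,\Gamma)$ only through the sharpness data. Running $\nu$ through a ray with $\lambda(\nu)\to\infty$ then gives infinitely many distinct eigenvalues, proving part (1). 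I expect this non-vanishing step to be the main obstacle, since making the competition between the decay rate of $\phi_\nu$ and the overlap of $\Gamma$-translates fully quantitative requires sharp asymptotics for discrete-series eigenfunctions rather than mere qualitative decay.

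For the stability statement (2), the key point is that the eigenvalues $\lambda(\nu)$ are fixed by the discrete-series parameters of $X$ and so do not move as $\Gamma$ is deformed; what must be checked is that the construction survives deformation. In the standard rank-one case, Theorem \ref{kassel_def} guarantees that every small deformation $\Gamma_t$ still acts properly discontinuously and cocompactly on $X$, so $\dq_t$ remains a compact form; for $diag(\textrm{SO}(1,n))\backslash(\textrm{SO}(1,n)\times\textrm{SO}(1,n))$ the analogous stability of proper cocompact actions is supplied by the work of Goldman and Kobayashi (\cite{goldman_nonstd}, \cite{kobayashi_def}). In both settings I would verify that the sharpness constant varies continuously and stays bounded below on a neighborhood of $\Gamma$, so the convergence and leading-term non-vanishing estimates hold uniformly in $t$ for all $|\nu|$ above a single threshold. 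Consequently the same infinite set $\{\lambda(\nu)\}$ remains realized in $\mathrm{Spec}_d(\dq_t)$ for all small $t$, yielding the deformation-stable subspectrum. The delicate point here is uniformity: the threshold on $|\nu|$ and the lower bound on the sharpness constant must be chosen simultaneously for the whole family $\Gamma_t$, which again reduces to quantitative control of eigenfunction decay against $\Gamma_t$-orbit growth.
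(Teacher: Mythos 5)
This survey states the Kassel--Kobayashi theorem without proof, deferring entirely to \cite{ka-ko} and \cite{ko-kas}, so there is no in-paper argument to compare against; measured against the strategy of those cited works, your proposal is essentially the correct reconstruction. The three pillars you identify --- Flensted--Jensen discrete series on $X=H\backslash G$ under the equal-rank hypothesis, generalized Poincar\'e series over $\Gamma$ made convergent and non-vanishing for large spectral parameter by the linear divergence built into the sharpness condition, and persistence of the $\Gamma$-independent eigenvalues under deformations along which sharpness holds uniformly --- are precisely the pillars of the actual proof, and you correctly flag the quantitative non-vanishing of the Poincar\'e series as the genuinely delicate step.
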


\noindent  For standard forms, the sharpness condition is always satisfied.  In certain other cases -- for example $\textrm{AdS}^3$-manifolds -- it is known that all compact forms are sharp \cite{fanny_thesis}.  Kassel and Kobayashi conjecture that it is always satisfied.

This nice result gives us an indication of one road ahead for the study of Clifford--Klein forms.  There are many basic, unanswered geometric questions about these spaces; they are likely to be much more difficult in the pseudo-Riemannan case than in the Riemannian case for the reasons we have noted above.  Very little work has been done in this direction, but the results surveyed here provide a very wide variety of tools to address such problems, as well as a library of examples on which to test them.

\bibliographystyle{alpha}
\bibliography{biblio}

\end{document}